\documentclass{article}
\usepackage[a4paper,top=3cm,bottom=3cm,left=3cm,right=3cm,marginparwidth=1.75cm]{geometry}

\usepackage{graphicx} 
\usepackage{epstopdf}
\usepackage[normalem]{ulem}
\usepackage{amsmath}
\usepackage{bm}
\usepackage{amsmath, amsfonts, amssymb, amsthm}
\usepackage{mathrsfs, dsfont}
\usepackage[dvipsnames]{xcolor}
\usepackage{graphicx}
\usepackage{float}
\usepackage{comment}
\usepackage{subfig}
\usepackage{bigints}

\usepackage{xparse} 

\usepackage{tasks}
\settasks{style=itemize}

\usepackage[numbers,square]{natbib}
\usepackage[colorlinks=true, allcolors=blue]{hyperref}

\newtheorem{theorem}{Theorem}[section]
\newtheorem{lemma}[theorem]{Lemma}

\newtheorem{proposition}[theorem]{Proposition}

\theoremstyle{remark}
\newtheorem{remark}{Remark}[section]
\newtheorem{example}[remark]{Example}

\numberwithin{equation}{section}

\usepackage{pifont}

\let\oldremark\remark
\let\oldendremark\endremark

\renewenvironment{remark}
  {\begingroup
   \pushQED{\qed}%
   \oldremark}
  {\popQED
   \oldendremark
   \endgroup}

\let\oldexample\example
\let\oldendexample\endexample

\renewenvironment{example}
  {\begingroup
   \pushQED{\qed}%
   \oldexample}
  {\popQED
   \oldendexample
   \endgroup}

\def\A{\mathbb{A}}

\def\B{\mathbb{B}}
\def\S{\mathbb{S}}

\def\P{\mathbb{P}}
\def\Pc{\mathcal{P}}
\def\F{\mathcal{F}}

\def\T{\mathcal{T}}
\def\H{\mathbb{H}}

\def\E{\mathbb{E}}
\def\Fb{\mathbb{F}}

\def\Es{\mathscr{E}}

\def\Ac{\mathcal{A}}

\def\Zc{\mathcal{Z}}
\def\Dc{\mathcal{D}}
\def\loc{\text{loc}}
\newcommand{\rd}{\mathrm{d}}

\newcommand{\esssup}{\operatorname*{ess\,sup}}

\newcommand{\tr}[1]{\operatorname{Tr}\brac{#1}}

\def\cara{\text{exp}}
\def\b{{\bl b}}

\def\x{\mathbf{x}}
\def\conc{\text{conc}}

\def\N{\Tilde{N}}

\newcommand{\R}{\mathbb{R}}
\renewcommand{\L}{\mathbb{L}}

\renewcommand{\N}{\mathbb{N}}
\newcommand{\C}{\mathcal{C}}
\newcommand{\I}{\mathbb{I}}
\newcommand{\cI}{\mathcal{I}}
\newcommand{\Rc}{\mathcal{R}}
\newcommand{\Ec}{\mathcal{E}}
\newcommand{\M}{\mathcal{M}}

\newcommand{\ch}[1]{\mathds{1}_{\left\{ #1 \right\}}}

\DeclareMathOperator*{\argmax}{arg\,max}

\ExplSyntaxOn
\NewDocumentCommand{\brac}{O{1} m}
 {
  \str_case:nnF {#1}
   {
    {1}{(#2)}
    {2}{\bigl(#2\bigr)}
    {3}{\Bigl(#2\Bigr)}
    {4}{\biggl(#2\biggr)}
    {5}{\Biggl(#2\Biggr)}
   }
   {(#2)}
 }

\NewDocumentCommand{\mbrac}{O{1} m}
 {
  \str_case:nnF {#1}
   {
    {1}{[#2]}
    {2}{\bigl[#2\bigr]}
    {3}{\Bigl[#2\Bigr]}
    {4}{\biggl[#2\biggr]}
    {5}{\Biggl[#2\Biggr]}
   }
   {(#2)}
 }

\NewDocumentCommand{\set}{O{1} m}
 {
  \str_case:nnF {#1}
   {
    {1}{\{#2\}}
    {2}{\bigl\{#2\bigr\}}
    {3}{\Bigl\{#2\Bigr\}}
    {4}{\biggl\{#2\biggr\}}
    {5}{\Biggl\{#2\Biggr\}}
   }
   {(#2)}
 }

 \NewDocumentCommand{\abs}{O{1} m}
 {
  \str_case:nnF {#1}
   {
    {1}{|#2|}
    {2}{\bigl|#2\bigr|}
    {3}{\Bigl|#2\Bigr|}
    {4}{\biggl|#2\biggr|}
    {5}{\Biggl|#2\Biggr|}
   }
   {(#2)}
 }
\ExplSyntaxOff

\usepackage{mathtools}
\usepackage{shuffle}
\usepackage{bbm}
\usepackage{authblk}

\mathtoolsset{showonlyrefs}
\newcommand{\bl}{\color{blue}}

\usepackage{enumitem}

\title{Backward SDE characterization of the finite horizon
Principal-Agent problem
\footnote{This work is partially supported by NSF grant \#DMS-2508581 and the Chair {\it Finance and Sustainable growth} of the 
{\it Risk Foundation}}} 
\author{Nizar Touzi\thanks{New York University, Tandon School of Engineering, nizar.touzi@nyu.edu}
~~~~~~ Yuxing Huang\thanks{New York University, Tandon School of Engineering, yh6004@nyu.edu}}

\begin{document}

\maketitle

\begin{abstract}
We consider the finite horizon continuous-time Principal--Agent problem under deterministic discount factors. Following the Sannikov reduction to a stochastic control problem, we provide a further characterization of the Principal's value function in terms of a backward SDE inducing the corresponding optimal contract. In particular, this allows to bypass the fully nonlinear HJB equation satisfied by the Principal value function in the Markovian setting. This new approach allows to handle a new class of Principal-Agent problems which was not accessible with the existing method, namely the setting where the Agent faces a regime-switching control problem.
\end{abstract}

\paragraph*{Keywords:} Stochastic control, Principal-Agent problem, backward SDEs.

\paragraph*{MSC2020:} Primary 93E20; Secondary 60H10, 91A15, 49N70.


\section{Introduction}\label{sec:introduction}

The continuous-time Principal–Agent problem is a canonical framework used to capture moral‐hazard situations, in which a principal delegates a task to an agent whose effort is not directly observable. Mathematically, this is formulated as a Stackelberg game, also called bi-level optimization in the operations research literature, or leader-follower problem in optimal control theory, and defined as follows:
\begin{itemize}
    \item given a contract $\xi$ representing the lump sum random compensation at maturity, the agent commits to the contract if his utility value is above his participation level and, in this case, chooses his best effort as an optimal response to the announced compensation scheme;
    \item the principal optimally chooses the contract $\xi$, anticipating the agent's best response, and accounting for the agent's participation constraint.
\end{itemize} 
Originating with \citet*{RePEc:aea:aecrev:v:63:y:1973:i:2:p:134-39} and formalized in continuous time by \citet*{Holmstrom1987}, the model has now become a central tool in modern corporate finance and contract theory. A fundamental development in the subject is Sannikov's \cite{sannikov2004dynamic} reduction method, which was later related by \citet*{cvitanic2017dynamicprogrammingapproachprincipalagent} to the backward stochastic differential equation (SDE) approach to path dependent optimal control. Sannikov's key insight was to describe incentive-compatible contracts through the agent's continuation value process, which leads to the following representation 
$$
Y_T=\xi,
~~\text{and}~~
dY_t
=
Z_t\cdot dX_t
-
H^A_t(X_{\cdot \wedge t},Y_t,Z_t)dt,
~~t\in[0,T],
 $$
where $H^A$ is the agent's Hamiltonian, and the  process $Y$ represents the agent's continuation value. This representation provides a tractable parametrization of
contracts and reduces the original Stackelberg game to a
standard stochastic control problem. This method was later rigorously formulated in a probabilistic setting by \citet*{cvitanic2017dynamicprogrammingapproachprincipalagent}, who studied a general case where the agent also controls the volatility of the output process by using the theory of second-order BSDE of \citet*{Soner2012} and further developed by \citet*{PossamaiTanZhou2018}. The unbounded horizon setting of Sannikov is addressed in \citet*{nizar_random_horizon_principal_agent}.
\\

This reduction has since generated a substantial literature, including sequential contracting by \citet*{AlvarezBayraktarEkrenHuang2025}, multi-agent and mean-field contracting problems by \citet*{EliePossamai2019}, \citet*{ElieMastroliaPossamai2019}, \citet*{CarmonaWang2021FiniteStateContractTheory}, and \citet*{HernandezSantibanez2024PrincipalMultiagents}, multi-principal problems by \citet*{hu2022principalagentproblemmultipleprincipals}, contractual frictions involving one-sided commitment and endogenous quitting by \citet*{ZhangZhu2025OneSidedCommitment}, adverse selection under a single contract restriction by \citet*{alvarez2026principalagentproblemsadverseselection}, and an alternative approach to volatility control that avoids the use of second-order BSDEs by ~\citet*{chiusolo2025newapproachprincipalagentproblems}.  Beyond these theoretical developments, continuous-time contracting models have also found applications in delegated portfolio management and brokerage by
\citet*{NadtochiyZariphopoulou2019}, 
\citet*{AlvarezNadtochiyWebster2023}, and
\citet*{AlvarezNadtochiy2026}, market regulation by
\citet*{Euch_market_regulation}, cybersecurity by
\citet*{mastrolia2026agencyproblemsadversarialbilevel}, and demand-response programs in electricity retail markets by \citet*{AidPossamaiTouzi2022DemandResponse},
\citet*{ElieHubertMastroliaPossamai2021}, and
\citet*{HernandezJofrePossamai2023}. For related computational approaches, see \citet*{CampbellChenShrivatsJaimungal2021}, \citet*{DayanikliLauriere2025MachineLearningStackelbergMFG}, and \citet*{LudkovskiXieZhu2025}.\\

Nevertheless, Sannikov's reduction does not provide a complete solution to the principal's problem, but rather expresses it as a new control problem with an additional controlled state, thus identifying optimal contracts through the optimal controls of the reduced problem. In the Markovian setting, the principal's value is typically characterized by the fully nonlinear Hamilton--Jacobi--Bellman (HJB) equation on the enlarged state space consisting of the output process and the agent's continuation utility: \begin{align*}
    -\partial_t v_P(t,x,y)
-\tfrac12 \Delta_{xx}v_P(t,x,y)-
H_P(t,x,Dv_P(t,x,y),D^2v_P(t,x,y))
=0,
\end{align*}
with an appropriate terminal condition. Here $H_P$ is the principal's Hamiltonian. Since the principal controls the process $Z$, which enters the diffusion coefficient of the continuation-value process, the reduced control problem involves an unbounded control in the diffusion coefficient. The corresponding HJB equation is therefore a fully nonlinear, degenerate elliptic PDE. Typically, in order to identify an optimal contract from this equation, one needs a verification theorem which requires sufficient smoothness of the value function $v_P$. 
In the non-Markovian setting, one may use instead the path-dependent PDEs as in \citet*{ekren2014viscosity}. Identifying the optimal controls faces the same regularity issue, which is even more serious in this setting.\\

The main contribution of this paper is to provide an explicit backward SDE representation for the principal’s value in the non-Markovian setting, and to identify the optimal contract under certain conditions. We study a finite-horizon moral-hazard problem in which the agent controls the drift of the output, while the principal offers a contract consisting of a terminal lump-sum compensation $\xi$ and a continuous contractible policy $\beta$, such as a running payment or operational support. A crucial structural assumption is that the discount rate is deterministic and uncontrolled. The representation is obtained through an auxiliary  pair process $(P,Q)$ characterized as the unique solution of an auxiliary backward SDE. If $r$ denotes the agent's discount rate, $R$ is the agent's reservation utility,  then we prove that the principal's value can be expressed in terms of the concave envelope $g^{\conc}$ of  the principal's utility function $g$:
$$
    V^P
    =
    g^{\conc}
    \Big((P_0-R)e^{\int_0^T r(s)ds}\Big),
$$
and the optimal agent's effort is expressed explicitly in terms of the process $Q$. Thus the stochastic and non-Markovian features of the problem are encoded in the auxiliary pair process $(P,Q)$, while the principal's preference enters through its concave envelope. Moreover, we construct an optimal contract explicitly when $(P_0-R)e^{\int_0^T r(s)ds}$ lies in the contact set between $g$ and its concave envelope $g^{\conc}$. In the Markovian case, this representation reduces the usual fully nonlinear HJB equation to a semilinear PDE for the function associated with $P$.
A distinctive feature of our setting is that $g$ is only assumed to be non-decreasing and need not be concave. Economically, this relaxation allows the model to capture threshold effects and benchmark-type objectives. Our analysis shows that the concavity of the principal's value is a property of the dynamic contracting problem itself rather than a consequence of the utility function. \\

The same method also applies beyond the risk-neutral-agent benchmark. We
consider, in particular, a risk-averse formulation in which both the agent and the principal have exponential utilities. Although continuous-time principal--agent models with exponential utilities have been studied extensively
since \citet*{Holmstrom1987} and \citet*{SCHATTLER1993331}, and further developed by \citet*{Sung1995}, \citet*{SchattlerSung1997}, \citet*{Muller1998} and \citet*{HellwigSchmidt2002}, the reduction we obtained here is new. After an exponential transformation of the continuation values of the agent and the principal, the non-Markovian second-best problem is reduced to a single auxiliary BSDE. In contrast with the risk-neutral-agent case, this auxiliary equation is no longer Lipschitz, and it is a quadratic BSDE whose generator combines the cost of incentive provision with the optimal risk-sharing term; see, for example, the work by \citet*{kobylanski_quadra_bsde}, \citet*{BRIAND20132921} and \citet*{Tevzadze2008}. Solving this BSDE directly yields the principal's value and an optimal contract. Related risk-averse and exponential-utility formulations also appear in the work by \citet*{CvitanicWanZhang2009}, \citet*{Williams2015} and \citet*{Euch_market_regulation}.\\

We finally show that this new reduction applies to a delegation problem in which the agent controls not only hidden effort but also unobservable costly switches among finitely many operating regimes. Such models are natural when the agent chooses when to change production modes, machine configurations, logistics regimes, or generation technologies. For a fixed contract, the agent's continuation values are described by a system of obliquely reflected BSDEs, following the probabilistic approach to optimal switching \citet*{hamadene_on_the_starting_and_stopping_problem}, \citet*{hu2007multidimensionalbsdeobliquereflection}, \citet*{Hamadne2010SwitchingPA}. The Sannikov method unfortunately does not apply in this context as one cannot simply turn the backward dynamics into a forward controlled additional state due to the following major difficulties. First, the principal offers a single terminal transfer, so the corresponding reflected BSDE system
must share a common terminal condition across regimes. Second, the finite-variation processes introduced in the system of reflected BSDEs are determined by the switching obstacles and the associated Skorokhod conditions, and therefore cannot be chosen by the principal as additional
control variables. Hence, the usual Sannikov-type reversal of the agent's continuation dynamics does not lead to a standard stochastic control problem. Nevertheless, the auxiliary-BSDE reduction developed in this paper still works. We construct an auxiliary system of reflected BSDEs which plays the role of the former pair $(P,Q)$ in the non-switching case. Following the same scheme as in the non-switching case, we can characterize the principal's value and obtain an explicit optimal contract under the corresponding attainability condition. This extension shows that our method is not tied to the reduction to a standard stochastic control problem.\\

The remainder of the paper is organized as follows. Section~\ref{sec:PA_problem} introduces the baseline model and recalls the continuation-value formulation of admissible contracts. Section~\ref{sec:BSDE_chara} provides the concavification result and the BSDE representation of both the principal's value and an optimal contract. Section~\ref{sec:cara} treats the case of exponential utilities and derives the corresponding quadratic-BSDE characterization. The final section~\ref{sec:PA_RS} studies the regime-switching agent and provides the reflected-BSDE representation of the principal's value and an optimal contract.

\paragraph{Notations.}
Fix $d\in\N$, let $\Omega = C([0,T],\R^d)$ be the set of all continuous maps from $[0,T]$ to $\R^d$. Let $X:[0,T]\times \Omega\longrightarrow\R^d$ be the canonical process on $\Omega$, representing the output that the agent is responsible for, $i.e.$ $X_t(\omega) := \omega_t$. Let $\mathbb{F}^0 = \brac{\F_t^0}_{t\in[0,T]}$ denote the canonical filtration, i.e., $\F_t^0:=\sigma(X_s:s\leq t)$. We equip $(\Omega,\F_T)$ with the Wiener measure $\P_0$, under which $X$ is a Brownian motion. We let $\mathbb{F} = (\F_t)_{ t\in[0, T]}$ be the usual augmentation of $\mathbb{F}^0$, that is, each $\F_t$ is first completed with the $\P_0$-null sets and then replaced by its right-continuous modification. We use $\T_t$ to denote the set of $\Fb$-stopping times taking value in $[t,T]$.\\

Let $\mathfrak{P}$ be a set of probability measures on $(\Omega,\F_T)$:
\begin{itemize}\renewcommand\labelitemi{-}

    \item Let $\S^p(\mathfrak{P})$ denote all the càdlàg process $\eta: [0,T]\times\Omega\longrightarrow \R^k$, s.t. $\sup_{\P\in\mathfrak{P}}\E^{\P} \big[\sup_{t\in[0,T]}\|\eta_t\|^p\big] $ $<\infty$, and we denote its subset with continuous paths by $\S^p_c(\mathfrak{P})$;
    
    \item Let $\H^p(\mathfrak{P})$ denote all the progressively measurable processes $\eta: [0,T]\times\Omega\longrightarrow \R^{l\times k}$, s.t. $\sup_{\P\in\mathfrak{P}}\E^{\P} \mbrac{\big(\int_0^T\|\eta_t\|^2dt\big)^{\frac{p}{2}}} <\infty$;
    \item Let $\H^p_{\loc}(\mathfrak{P})$ denote all the progressively measurable processes $\eta: [0,T]\times\Omega\longrightarrow \R^{l\times k}$, s.t. there exists a sequence of stopping times $(\tau_n)_{n\in\N}$ with $ \tau_n\uparrow \infty$, and for each $n\in\N$, $\eta_{\cdot\wedge\tau_n}\in \H^p(\mathfrak{P})$;
    
    \item Let $\L^p\brac{\mathfrak{P}}$ denote all the $\F_T$-measurable random variables $\zeta$, s.t. $\sup_{\P\in\mathfrak{P}}\E^{\P} \big[\|\zeta\|^p\big]<\infty$;

    \item Let $\I^p(\mathfrak{P})$ denote all the càdlàg increasing process $\eta: [0,T]\times\Omega\longrightarrow \R$, s.t. $\eta_0 = 0$ and $\sup_{\P\in\mathfrak{P}}\E^{\P}\big[ (\eta_T)^p\big] <\infty$, and we denote its subset with continuous paths by $\I^p_c(\mathfrak{P})$.
  
\end{itemize}
We also define $\mathbb{J}(\mathfrak{P})  :=\cup_{p>1}\mathbb{J}^p(\mathfrak{P}) $ for every $\mathfrak{P}$, and $\mathbb{J}  :=\cup_{p>1}\mathbb{J}^p(\P_0) $,  for $\mathbb{J}\in\set{\S,\S_c,\H,\H_{\loc},\L,\I,\I_c }$. \\

Throughout this paper we use $\x$ to denote an element in $\Omega$, while we use $x$ to denote an element in $\R^d$, and we use $x_{\cdot}$ to denote a constant path with value $x\in\R^d$. 

\section{Overview of Principal-Agent Problem}\label{sec:PA_problem}
\subsection{The Agent's problem}
Let $A$ and $B$ be two compact Polish spaces. Denote by $\mathbb{A}$ (resp.~$\mathbb{B}$) the collection of all $\mathbb{F}$-optional processes with values in $A$ (resp.~$B$). For $k\in\N$, we denote by $\Dc^k$ the collection of all functions
$$
\varphi:[0,T]\times\Omega\times A\times B\longrightarrow \R^k,
$$
such that, for every $(t,\x,a,b)\in [0,T]\times\Omega\times A\times B$, the process $\varphi(\cdot,a,b)$ is $\Fb$-optional and $\varphi_t(\x,\cdot)$ is continuous. In particular,
$
\varphi_t(\x,a,b)=\varphi_t(\x_{\cdot\wedge t},a,b)
$. Let $\mu\in\Dc^d$ be bounded. For every $(\alpha,\beta)\in\A\times\B$ , by Girsanov's theorem, there exists a unique probability measure $\P^{\alpha,\beta}$, s.t.\begin{equation*}
    dX_t = \mu_t(X,\alpha_t,\beta_t)dt+ dW^{\alpha,\beta}_t,
\end{equation*}
for some $\P^{\alpha,\beta}$-Brownian motion $W^{\alpha,\beta}$. We denote $\Pc:=\big\{\P^{\alpha,\beta}:\alpha\in\A,\beta\in \B\big\}$. By Novikov’s criterion, we know $\P^{\alpha,\beta}$ is equivalent to $ \P_0$ with Radon-Nikodym density
$$
\frac{\rd\P^{\alpha,\beta}}{\rd\P_0}\bigg|_{\F_t}
=\Ec_t \bigg(\int_{0}^{\cdot} \mu_s(X,\alpha_s,\beta_s)\cdot dX_s \bigg),
$$
where $
\Ec_t(D):=e^{D_t - \frac{1}{2}\langle D,D \rangle_t}$, and we denote by $\E^{\alpha,\beta}:=\E^{\P^{\alpha,\beta}}$, the expectation under  $\P^{\alpha,\beta}$. 

\begin{remark}\label{rem:Lq_equal_Lp}
    Let $\eta\in\F_T$ be a random variable. Then the following two assertions are equivalent: \begin{enumerate}
        \item 
    There exists $p>1$, s.t.  $\E^{\P_0}\big[|\eta|^p\big]<\infty$;
    \item There exists $q>1$, s.t. $\sup_{\P\in\Pc}\E^{\P}\big[|\eta|^q\big]<\infty.$ 
    \end{enumerate}
    Moreover, $\mathbb{J}(\Pc)  =\mathbb{J}(\P_0) $,  for $\mathbb{J}\in\set{\S,\S_c,\H,\H_{\loc},\L,\I,\I_c }$.\\
    
We only justify (1) implies (2), and we observe that the converse follows by applying the same argument to
$\frac{d\P_0}{d\P}\big|_{\F_t}$. For every $\P\in\Pc$, let $\Lambda^{\P}_t = \frac{\rd \P}{\rd\P_0}\big|_{\F_t} = \Ec_t\big(\int_0^{\cdot}\mu^{\P}_s\cdot dX_s\big)$, and we derive,  \begin{align*}
        |\Lambda^{\P}_t|^{p'}& = \Ec_t\bigg(\int_0^{\cdot}p'\mu^{\P}_s\!\cdot\! dX_s\bigg)e^{\frac{p'(p'-1)}{2}\!\int_0^t|\mu^{\P}_s|^2ds}\leq\Ec_t\bigg(\int_0^{\cdot}p'\mu^{\P}_s\!\cdot \!dX_s\bigg)e^{\frac{p'(p'-1)}{2}\|\mu^\P\|^2_{\infty}T},    \end{align*}
for every $p'>1$. Since $\mu$ is bounded, $\Ec\big(\int_0^{\cdot}p'\mu^{\P}_s\cdot dX_s\big)$ is a $\P_0$-uniformly integrable martingale. Therefore, let $p':=\frac{p}{p-q}$, by Doob's inequality, $M:=\sup_{\P\in\Pc}\E^{\P_0}\Big[\sup_{t\in[0,T]}|\Lambda^{\P}_t|^{p'}\Big]^{1/p'}<\infty$. Assume there exists $p>1$ s.t. $\E^{\P_0}\big[|\eta|^{p}\big]<\infty$. Pick $q\in(1,p)$, by Hölder's inequality, \begin{align*}
        \E^{\P}\big[|\eta|^{q} \big]=  \E^{\P_0}\big[|\eta|^{q}\Lambda^{\P}_T\big]\leq \E^{\P_0}\big[\big(\Lambda^{\P}_T\big)^{p'}\big]^{\frac{1}{p'}}
\E^{\P_0}\big[|\eta|^{p}\big]^{\frac{q}{p}}
        \le 
        M
        \E^{\P_0}\big[|\eta|^{p}\big]^{\frac{q}{p}}<\infty.
    \end{align*}
\end{remark}
 Let $\C_0\!:=\! \L\!\times\! \B$ denote the set of contracts. By Remark~\ref{rem:Lq_equal_Lp}, for every $\xi \in \L$, there exists $p>1$, s.t. 
\begin{align*}
   \sup_{(\alpha,\beta)\in \A\times \B} \E^{\alpha,\beta}\big[|\xi|^p\big]<\infty. 
\end{align*}

 We now introduce the cost function $f\in \Dc^1$, which satisfies the integrability assumption: \begin{align}
\label{eq:f_integ_assump}
\sup_{\alpha, \beta} \E^{\alpha,\beta}\bigg[\int_0^T|f_t(\alpha_t, \beta_t)|^p \rd t\bigg]<\infty, \text{ for some } p\in(1,\infty), 
\end{align}
and the discount factor is defined by:
$$
\Rc_t := e^{-\int_0^tr(s) ds},
~~\text{for some bounded measurable map}~~
r: [0,T]\longrightarrow \R.
$$
The agent's value function is:
 \begin{align}
 V^A(\xi,\beta) := \sup_{\alpha\in\A}J^A(\xi,\beta;\alpha),
 ~~\text{with}~~
    J^A(\xi,\beta;\alpha) :=\E^{\alpha,\beta}\bigg[\Rc_T\xi - \int_0^T\Rc_tf_t(\alpha_t,\beta_t)dt \bigg]. 
\end{align}
We denote by $\hat{\Ac} (\xi,\beta)$ the set of all Agent's optimal response under contract $(\xi,\beta)$, which will be shown to be non-empty under our assumptions. 
\subsection{The Principal's problem}
For a fixed positive constant $R$, we define the set of all admissible contracts by: \begin{equation}\label{eq:admissable_contract}
\C:=\big\{ (\xi,\beta)\in \C_0: V^A(\xi,\beta)\geq R\big\}.
\end{equation}
The restriction $V^A(\xi,\beta)\geq R$, known as 'participation constraint', means the Agent will cooperate only if his expected payoff is at least his reservation utility $R$.\\

Let $g:\R\to\R$ be a non-decreasing function, interpreted as the Principal’s utility. Throughout this paper, we will not require the standard concavity condition on $g$. In addition to the critical non-decrease of $g$, we shall only need that its concave envelope $g^{\conc}$ be finite on $\R$. 

Let $\ell:\Omega\to\R$ be a measurable liquidation function, which represents the Principal’s cash inflow. We assume $\ell$ has polynomial growth, i.e. there exists $C,\kappa>0$, s.t. $$
|\ell(\x)|\leq C\big(1+\|\x\|^\kappa_{\infty}\big).
$$  
Under the standard favorable tie-breaking convention which states that, in case of multiple optimal responses, the Agent implements the most favorable one to the Principal, we define the Principal’s optimal contracting problem as:
\begin{align}\label{eq:principal_value}
V^P := \sup_{ (\xi,\beta)\in\C} \sup_{\alpha^{\star}\in \hat{\Ac} (\xi,\beta)}\E^{\alpha^{\star},\beta}\mbrac{g\brac{\ell-\xi}}.
\end{align}
Since $g^{\conc}<\infty$, there exists a constant $C>0$, s.t. $g^{\conc}(x)\leq C(1+\abs{x})$ for every $x\in\R$, so the expectation above is well-defined. By an immediate extension of \citet*[Theorem 4.2]{cvitanic2017dynamicprogrammingapproachprincipalagent}, we have
$$
V^P
= 
\sup_{y\geq R}
V^P(y),
~~\text{where}~~ V^P(y) := \sup_{\substack{Z\in \H\\
\beta \in \B}}
\;\sup_{\alpha^{\star}\in \hat{\Ac}\big(Y_T^{y,Z,\beta},\beta\big)}\E^{\alpha^{\star},\beta}\bigg[g\Big(\ell - Y_T^{y,Z,\beta}\Big)\bigg],
$$
and $Y=Y^{y,Z,\beta}$ is defined by the forward SDE:
\begin{align}\label{eq:SDE_of_Y}
    dY_t = Z_t\cdot dX_t+r(t)Y_tdt - H_t(Z_t, \beta_t)dt,\qquad Y_0 = y.
\end{align}
and the Hamiltonian $H:[0,T]\times \Omega\times \R^d\times B\longrightarrow\R $ is defined as:\begin{align}\label{eq:hamiltonian}
    H_t(\x,z,b) := \sup_{a\in A}h_t(\x,z,a,b),\qquad h_t(\x,z,a,b):=z\cdot\mu_t(\x,a,b) - f_t(\x,a,b).
\end{align}
Since $A$ is compact and $a\mapsto h_t(\x,z,a,b)$ is continuous by assumption, by a standard measurable selection argument, we know the set $\hat{\Ac}\big(Y_T^{y,Z,\beta},\beta\big)=\big\{\alpha\in\A:\alpha_t\in \argmax_{a\in A} h_t(Z_t,a,\beta_t),~ dt\otimes d\P_0\text{-a.e.}\big\}$ is non-empty. Notice that $\hat{\Ac}\big(Y_T^{y,Z,\beta},\beta\big)$ depends on the contract only through $(Z,\beta)$, so we will simply denote it by $\hat{\Ac}(Z,\beta)$.  Moreover, as $g$ is non-decreasing, notice that the supremum over $y\ge R$ is achieved at $R$, so $$V^P = V^P(R),
~~\text{where}~~ V^P(y) := \sup_{\substack{Z\in \H\\
\beta \in \B}}
\;\sup_{\alpha^{\star}\in \hat{\Ac}(Z,\beta)}\E^{\alpha^{\star},\beta}\bigg[g\Big(\ell - Y_T^{y,Z,\beta}\Big)\bigg].$$

In the Markovian case where $\ell(\x)=\ell(\x_T)$ and $\varphi_t(\x,\cdot)=\varphi_t(\x_t,\cdot)$ for each $\varphi\in\{\mu,f\}$,  where we abuse the notation by writing $\ell$ and $\varphi$ for the induced scalar valued functions on $\R^d$ and $[0,T]\times\R^{d+1}$, respectively. In this case, under suitable assumptions (see \cite[Theorem 3.9]{cvitanic2017dynamicprogrammingapproachprincipalagent}), $V^P(y)=v(0,X_0,y)$, where the dynamic value function $v$ defined on $(t,x,y)\in[0,T]\times\R^d\times\R$ is characterized by the corresponding fully non-linear HJB equation:
\begin{align}
\left\{\begin{aligned}
&-\partial_t v
-\tfrac12 \Delta_{xx}v-r\,y\,\partial_yv
-F(\cdot,Dv,D^2v)
=0,
\qquad \text{on}~~[0,T)\times\R^d\times\R,\\[2pt]
&v(T,x,y)=g\big(\ell(x)-y\big),
\qquad (x,y)\in\R^d\times\R.
\end{aligned}\label{eq:HJB_fully_non_linear}
\right.
\end{align}
where for $p = (p_x,p_y)^{\top}$ and $\Theta=\left(\begin{array}{ll}
\Theta_{xx} & \Theta_{xy} \\
\Theta_{xy}^{\top} & \Theta_{yy}
\end{array}\right)$, $F$ is defined by:
\begin{align*}
F(t,x,p,\Theta)&:=\sup_{\substack{z\in\R^d\\ b\in B}}\ \sup_{q\in \partial_z H_t(x,z,b)}\Big[q\cdot\big(p_x+p_yz\big)
- H(t,x,z,b)p_y
+\tfrac12 |z|^2\Theta_{yy}
+ z\cdot\Theta_{xy}\Big].
\end{align*}
This standard method provides a solution to the Principal-Agent problem. However, there are still several pieces missing. First, the uniqueness for the fully nonlinear PDE \eqref{eq:HJB_fully_non_linear} in the viscosity sense is not obvious. Second, identifying the optimal contract requires further regularity of the solution to the PDE~\eqref{eq:HJB_fully_non_linear}.\\

In this paper, we provide a closed-form solution to this Principal-Agent problem, which connects to a backward SDE, and we also characterize the optimal contract. This is mainly due to the special structure of our setting, in which the discount rate $r$ is deterministic. In particular, we will show that, in this case, the fully non-linear PDE \eqref{eq:HJB_fully_non_linear} reduces to a semi-linear PDE.
\begin{remark}
    From Equation \eqref{eq:HJB_fully_non_linear}, we expect $v$  to be concave in $y$, otherwise $F$ will be infinity. Although this observation is at this stage only an intuition from the HJB equation in the Markovian setting, it will play a crucial role for the derivation of the main results of this paper.
\end{remark}

\section{BSDE Characterization of the Principal's Value Function}\label{sec:BSDE_chara}
\subsection{The main representation}
Consider a backward SDE:
\begin{align}\label{eq:BSDE_of_Gamma}
P_T=\ell
        ~~\text{and}~~
        d P_t =  Q_t\cdot dX_t  +r(t) P_tdt -\bar{H}_t( Q_t)dt, \quad \P_0\text{-a.s.} 
\end{align}
where $\bar{H}:[0,T]\times \Omega \times\R^d\longrightarrow\R$ is defined by:
$$
\bar{H}_t(\x,z):=\sup_{(a,b)\in A\times B}\big(z\cdot\mu_t(\x,a,b) - f_t(\x,a,b)\big) = \sup_{b\in B}H_t(\x,z,b) .
$$

Since $\mu$ is uniformly bounded, $\bar{H}$ is Lipschitz in $z$, uniformly in $(t,\x)$. Therefore, since $\ell\in\cap_{p>1}\L^p$, by classical $\mathbb{L}^p$ BSDE theory (see \citet*{BRIAND2003109}), Equation \eqref{eq:BSDE_of_Gamma} has a unique solution $( P,  Q)$ s.t. $ P\in\cap_{p\in(1,p_f)}\S_c^p$, $ Q\in\cap_{p\in(1,p_f)}\H^p$ for some $p_f>1$. Recall the notation $g^{\conc}$ for the concave envelope of $g$. Our first main result is the following explicit expression for the Principal's value function \eqref{eq:principal_value}. 

\begin{theorem}\label{thm:solve_PA}Assume $g$ is non-decreasing and $g^{\conc}<\infty$,  then 
    \begin{align*}
    V^P = g^{\conc}\big(( P_0 - R)e^{\int_0^Tr(s)ds}\big).
\end{align*}
Moreover, if $( P_0 - R)e^{\int_0^Tr(s)ds}\in \{g = g^{\conc}\}$, an optimal contract $(\xi^{\star},\beta^{\star})$ is given by: $$\xi^{\star} = Y_T^{R, Q,\beta^{\star}}=\ell+ (R -  P_0)e^{\int_0^Tr(s)ds},\qquad \beta^{\star}_t\in \arg\max_{b\in B}H_t( Q_t,b).$$
\end{theorem}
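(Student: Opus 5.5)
The plan is to compare the Agent's continuation value $Y=Y^{R,Z,\beta}$ of \eqref{eq:SDE_of_Y} with the solution $(P,Q)$ of \eqref{eq:BSDE_of_Gamma}, and to use the reduction $V^P=V^P(R)$ recalled above. Throughout, write $x_0:=(P_0-R)e^{\int_0^Tr(s)ds}$. The upper bound will come from a comparison argument plus Jensen, and the lower bound from an explicit contract (attainable case) or a randomization construction (general case).

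\emph{Step 1: key identity.} Fix $(Z,\beta)\in\H\times\B$ and $\alpha^\star\in\hat\Ac(Z,\beta)$, and set $D_t:=\Rc_t(P_t-Y_t)$. Under $\P^{\alpha^\star,\beta}$ we have $dX_t=\mu_t(\alpha^\star_t,\beta_t)dt+dW_t$, and the discounting cancels the $r$-terms. Since $Z_t\cdot\mu_t(\alpha^\star_t,\beta_t)-H_t(Z_t,\beta_t)=f_t(\alpha^\star_t,\beta_t)$, Itô's formula gives
\begin{align}
dD_t=\Rc_t(Q_t-Z_t)\cdot dW_t-\Rc_t\,\delta_t\,dt,\qquad \delta_t:=\bar H_t(Q_t)-Q_t\cdot\mu_t(\alpha^\star_t,\beta_t)+f_t(\alpha^\star_t,\beta_t)\ge0 .
\end{align}
Here $\delta_t\ge0$ by the definition of $\bar H$ as a supremum over $(a,b)$. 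Because $P_T=\ell$, this yields
\begin{align}
\ell-Y_T=e^{\int_0^Tr}\Big(x_0e^{-\int_0^Tr}+\int_0^T\Rc_t(Q_t-Z_t)\cdot dW_t-\int_0^T\Rc_t\delta_t\,dt\Big).
\end{align}
The stochastic integral is a true $\P^{\alpha^\star,\beta}$-martingale with zero mean. This follows from $Q,Z\in\H$ together with Remark~\ref{rem:Lq_equal_Lp}, which transfers the $\H^p$ bounds to all $\P\in\Pc$ and then BDG applies.

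\emph{Step 2: upper bound.} The function $g^{\conc}$ is concave and finite. It is non-decreasing because $g$ is: if $g^{\conc}$ decreased somewhere, then replacing it by its running supremum would give a smaller concave majorant of $g$, a contradiction. Combining $g\le g^{\conc}$, the identity of Step 1 with $\delta\ge0$, and Jensen's inequality, we get
\begin{align}
\E^{\alpha^\star,\beta}\big[g(\ell-Y_T)\big]\le g^{\conc}\Big(\E^{\alpha^\star,\beta}[\ell-Y_T]\Big)\le g^{\conc}(x_0).
\end{align}
Taking suprema over $(Z,\beta,\alpha^\star)$ gives $V^P\le g^{\conc}(x_0)$.

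\emph{Step 3: attainable case.} Take $Z=Q$ and $\beta^\star_t\in\argmax_b H_t(Q_t,b)$, chosen measurably. Any $\alpha^\star\in\hat\Ac(Q,\beta^\star)$ then attains $\bar H_t(Q_t)$, so $\delta\equiv0$, and the stochastic integral vanishes. Hence $\ell-\xi^\star=x_0$ deterministically, which is exactly the stated formula for $\xi^\star$. The participation constraint holds with $Y_0=R$. The Principal obtains $g(x_0)=g^{\conc}(x_0)$, which matches the upper bound, so $(\xi^\star,\beta^\star)$ is optimal.

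\emph{Step 4: general lower bound (main obstacle).} Use the one-dimensional representation $g^{\conc}(x_0)=\sup\{\lambda g(x_1)+(1-\lambda)g(x_2):\ \lambda x_1+(1-\lambda)x_2=x_0\}$. For fixed such $(\lambda,x_1,x_2)$ and small $\varepsilon>0$, I would take $Z=Q$ on $[\varepsilon,T]$ and $Z=Q+\zeta$ on $[0,\varepsilon]$, with $\beta=\beta^\star$. The process $\zeta$ is chosen so that $\int_0^\varepsilon\Rc_t\zeta_t\cdot dW_t$ realizes, up to a small error, the centered two-point variable taking the values $x_0-x_i$ (scaled by $e^{-\int_0^Tr}$) with probabilities $\lambda,1-\lambda$.

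The drift loss is harmless. Indeed $\delta_t\le C(1+|Q_t|)$ with $C$ independent of $\zeta$, because $\mu$ is bounded and $f$ enters only through bounded-$\mu$ comparisons, using \eqref{eq:f_integ_assump}. Hence $\int_0^\varepsilon\Rc_t\delta_t\,dt\to0$ in $L^1$ as $\varepsilon\to0$. On $[\varepsilon,T]$ we have $\delta=0$ by the choice of $\beta^\star$ and of the tie-breaking $\alpha^\star$.

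The delicate point is circularity. The Brownian motion $W=W^{\alpha^\star,\beta^\star}$ depends on the Agent's response to $\zeta$. I would first construct $\zeta$ with respect to $X$ under $\P_0$, via martingale representation of an indicator of an $\F_\varepsilon$-event of $\P_0$-probability $\lambda$. I would then use that on $[0,\varepsilon]$ the densities $d\P^{\alpha,\beta}/d\P_0|_{\F_\varepsilon}\to1$ uniformly in $L^p$ as $\varepsilon\to0$ (bounded $\mu$), so that the law of the terminal variable under the Agent's measure is close to the target two-point law. After that, a truncation or lower-semicontinuity argument for $g$ is needed, possibly replacing $x_i$ by nearby points and using that $g$ is monotone, so that $\E[g(\ell-Y_T)]\ge\lambda g(x_1)+(1-\lambda)g(x_2)-o(1)$. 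This approximation step is where I expect the main technical work to lie.
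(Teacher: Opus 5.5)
Your Steps 1--3 coincide with the paper's Lemma~\ref{lem:bound_VP}: the supermartingale property of $(P_t-Y_t)e^{\int_t^Tr(s)ds}$, then Jensen, then the choice $Z=Q$, $\beta^\star$. There is one slip. The running supremum of $g^{\conc}$ is a \emph{larger} concave majorant, so it contradicts nothing. The correct reason $g^{\conc}$ is non-decreasing is that a finite concave function which decreases somewhere tends to $-\infty$ at $+\infty$, and that is impossible for a majorant of the non-decreasing $g$.

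For the general lower bound the paper does not randomize explicitly. It proves that $y\mapsto V_0(y)$ is concave (Lemma~\ref{lem:v_concave_in_y}). To do so it runs $Y$ with volatility $Z^n=(n,0,\dots,0)$ until it exits $(y_2,y_1)$, which happens almost immediately and on each side with probability close to $\tfrac12$, and then invokes dynamic programming. Concavity then upgrades the bound $V_0(y)\ge g\big((P_0-y)e^{\int_0^Tr(s)ds}\big)$ to $g^{\conc}$. Your Step~4 is instead the explicit construction the paper uses in Step~2 of Theorem~\ref{thm:solve_PA_RS}, transported to $Z$.

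That route is viable, but as written it has a gap exactly at the step you postpone, and your control of the drift loss is the wrong one. The bound $\delta_t\le C(1+|Q_t|)$ with $C$ independent of $\zeta$ is not available, since $f$ is only assumed to satisfy \eqref{eq:f_integ_assump}. More importantly, smallness of the error in $L^1$ does not suffice:
\begin{itemize}
\item $g$ is only non-decreasing, so it may jump at $x_1,x_2$, and then $g(x_i-o(1))\not\to g(x_i)$. This is why Theorem~\ref{thm:solve_PA_RS} assumes $g$ continuous.
\item $g$ may be unbounded below, so a small bad set can carry arbitrarily negative, even non-integrable, values of $g(\ell-Y_T)$.
\end{itemize}

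Three ingredients are missing.
\begin{enumerate}
\item[(i)] The pathwise bound
\[
0\le\delta_t=H_t(Q_t,\beta^\star_t)-H_t(Q_t+\zeta_t,\beta^\star_t)+\zeta_t\cdot\mu_t(\alpha^\star_t,\beta^\star_t)\le2\|\mu\|_\infty|\zeta_t| .
\]
It follows from $\beta^\star_t\in\argmax_bH_t(Q_t,b)$ and the Lipschitz property of $H$ in $z$.
\item[(ii)] A choice of $\zeta$ with $\int_0^\varepsilon|\zeta_t|dt=O(\sqrt\varepsilon)$ pathwise. For example, take $\zeta_t=c\,\Rc_t^{-1}\psi_t$ for a suitable constant $c$, where $\psi$ is the $\P_0$-representation integrand of $\mathds{1}_{A_\varepsilon}$ with $A_\varepsilon=\{X^1_\varepsilon-X^1_0\le\sqrt\varepsilon\,\Phi^{-1}(\lambda)\}$; it satisfies $|\psi_t|\le(2\pi(\varepsilon-t))^{-1/2}$. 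Then both $\int_0^\varepsilon\Rc_t\zeta_t\cdot\mu_t(\alpha^\star_t,\beta^\star_t)dt$ and $\int_0^\varepsilon\Rc_t\delta_tdt$ are deterministically $O(\sqrt\varepsilon)$, and your circularity concern disappears.
\item[(iii)] A shift. Take $\lambda x_1+(1-\lambda)x_2=x_0-\eta$ with $\lambda g(x_1)+(1-\lambda)g(x_2)\ge g^{\conc}(x_0-\eta)-\eta$, and target the values $x_i+\eta$, whose mean is $x_0$. Once $C\sqrt\varepsilon\le\eta$, monotonicity gives $g(\ell-Y_T)\ge g(x_1)\mathds{1}_{A_\varepsilon}+g(x_2)\mathds{1}_{A_\varepsilon^c}$. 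Then use $\P^{\alpha^\star,\beta^\star}(A_\varepsilon)\to\lambda$ uniformly in the response, let $\varepsilon\to0$, and let $\eta\to0$ by continuity of the finite concave $g^{\conc}$.
\end{enumerate}

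Completed this way, your argument is more elementary than the paper's. It produces explicit $\varepsilon$-optimal contracts and avoids the dynamic programming principle and the a.s.\ lower semicontinuity of $t\mapsto V_t(y)$ on which the concavity proof relies. The paper's route, in exchange, yields concavity of $V_0$ as a structural fact, which by the remark following its proof survives bounded state- and control-dependent discounting.
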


\begin{remark}
Although we obtain an elegant solution without imposing any concavity assumption on $g$, we are currently unable to identify the optimal contract in general. Example~\ref{eg:no_optimizer} below suggests that some form of concavity may be necessary for the existence of an optimal contract.
\end{remark}

\begin{example}\label{eg:pde_markovian}
Consider the Markovian case where $\ell(\x) = \ell(\x_T)$, and $\varphi_t(\x,\cdot) = \varphi(t,\x_t,\cdot)$ for each $\varphi\in\big\{\mu, f\big\}$.
For every $ (t,x)\in[0,T]\times \R^d$, let 
$$w(t,x):= \E^{\P_0}[ P_t|X_{ t} = x].
$$ 
By Theorem \ref{thm:solve_PA}, the fully non-linear PDE of $v$ is reduced to a semilinear for $w$. By standard viscosity theory, $w$ is a viscosity solution to the following PDE: \begin{align}\label{eq:semi_linear_PDE_of_w}
-\partial_t w+rw-\tfrac{1}{2}\Delta w - \bar{H}(\cdot,\nabla w) = 0,
~\text{on}~[0,T)\times\R^d,
~~\text{and}~~
w\big|_{t=T} = \ell~\text{on}~\R^d.
\end{align}
We can easily verify that if $g^{\conc}\in C^{2}(\R)$, $w\in C^{1,2}([0,T]\times \R^d)$, then $v(t,x,y):= g^{\conc}\big((w(t,x) - y)e^{\int_t^Tr(s)ds}\big)$ solves the following PDE: 
\begin{align}
\left\{\begin{aligned}
&-\partial_t v(t,x,y)
-\tfrac12 \Delta_{xx}v(t,x,y)-r\,y\,\partial_yv(t,x,y) \\
&\hspace{1.9em}-
F(t,x,Dv(t,x,y),D^2v(t,x,y))
=0,
\qquad (t,x,y)\in[0,T)\times\R^d\times\R,\\[2pt]
&v(T-,x,y)=g^{\conc}\big(\ell(x)-y\big),
\qquad (x,y)\in\R^d\times\R,
\end{aligned}\label{eq:HJB_fully_non_linear_non_concav}
\right.
\end{align}

which is an extension of Equation \eqref{eq:HJB_fully_non_linear} when $g$ is not necessarily concave. The details of the computation are given in Appendix~\ref{app:compute}. 
\end{example}

\medskip
The following example illustrates a situation in which no optimal contract exists.

\begin{example}\label{eg:no_optimizer}
    Let $
    A = [-1,1]$, $B = \{b\}$, $T = 1$, $\mu_t(a) = a$, $ f_t(a) = \frac{1}{2}a^2$, $ \ell = 0$, and $ r = 0$. Then  
    $$
    a^{\star}(z)
    = -\ch{z <-1} +z \ch{|z| \leq1}+ \ch{z >1},
    ~\text{and}~
    d Y_t=Z_t d W^{\alpha^{\star}(Z)}_t+\frac{1}{2}|a^{\star}(Z_t)|^2 d t.
    $$
The Principal's value function reduces to $$
    V^P = \sup_{Z}\E^{\alpha^{\star}(Z)}\Big[g\Big(-R-\int_0^1Z_tdW_t^{\alpha^{\star}(Z)} - \frac{1}{2}\int_0^1|a^{\star} (Z_t)|^2dt\Big)\Big]. 
    $$
    Let $g(x) = x\mathds{1}_{[0,1)}(x)+\mathds{1}_{[1,+\infty)}(x)$, so $g^{\conc}\equiv 1$, and $\big\{g^{\conc} = g\big\}=[1,+\infty)$.  Since $P\equiv 0$, by Theorem \ref{thm:solve_PA}, $V^P$ has an optimizer if $R\leq -1$. In the current example, this is actually a necessary and sufficient condition. Indeed, when $R\leq -1$, $ Q = 0$ is the optimizer. On the other hand, when $R>-1$, we assume there exists an optimizer $Q$. Let
 $$
 J_t:=\int_0^t Q_sdW^{\alpha^{\star}(Q)}_s + \frac{1}{2}\int_0^t|\alpha^{\star} ( Q_s)|^2ds,
 $$  
   then we must have $g(-R - J_1) =1$ almost surely. This implies that $J_1\leq -R-1<0$ almost surely. However, by definition of $J$, $\E^{\alpha^{\star}(Q)}\big[J_1\big] = \frac{1}{2}\E^{\alpha^{\star}(Q)}\int_0^1|\alpha^{\star} ( Q_s)|^2ds\geq 0$, which is a contradiction. 

We can also see that, even if we enlarge the filtration by adding some randomness independent of $X$ (thus independent of $W$), the same proof in this example still works, i.e. there is no optimizer when $( P_0-R)e^{\int_0^Tr(s)ds}\notin \big\{g^{\conc} = g\big\}$.
\end{example}

\medskip
In order to prove Theorem~\ref{thm:solve_PA}, we first derive an a priori two-sided
estimate for the Principal's continuation value. We use $Y^{(t,y);Z,\beta}$ to denote the solution to Equation \eqref{eq:SDE_of_Y} on $[t,T]$, starting from $Y_t=y$ under the controls $(Z,\beta)$. Define $$
V_t(y):=\esssup_{\substack{Z\in \H \\
\beta \in \B}}\esssup_{\alpha^{\star}\in \hat{\Ac}(Z,\beta)}\E^{\alpha^{\star},\beta}\bigg[g\Big(\ell - Y_T^{(t,y);Z,\beta}\Big)\bigg|\F_t\bigg].
$$
\begin{lemma}\label{lem:bound_VP}For every $(t,y)\in[0,T]\times\R$,
    \begin{equation*}
g\brac[3]{( P_t - y)e^{\int_t^Tr(s)ds}}\leq V_t(y)\leq g^{\conc}\brac[3]{( P_t - y)e^{\int_t^Tr(s)ds}}.
    \end{equation*}
\end{lemma}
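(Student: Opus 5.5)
The key object is the difference process $D_t := P_t - Y_t$, where $Y=Y^{(t,y);Z,\beta}$ is the continuation value under arbitrary controls $(Z,\beta)$ and $\alpha^\star\in\hat{\Ac}(Z,\beta)$. Since both $P$ and $Y$ carry the linear drift $r(s)\,\cdot\,ds$,
\begin{align*}
d\big(e^{\int_s^T r(u)du}D_s\big) = e^{\int_s^T r(u)du}\Big[(Q_s - Z_s)\cdot dX_s - \big(\bar H_s(Q_s) - H_s(Z_s,\beta_s)\big)ds\Big].
\end{align*}
Substituting $dX_s = \mu_s(X,\alpha^\star_s,\beta_s)ds + dW^{\alpha^\star,\beta}_s$, the $ds$-drift becomes
\[
(Q_s - Z_s)\cdot\mu_s(\alpha^\star_s,\beta_s) - \bar H_s(Q_s) + H_s(Z_s,\beta_s).
\]
By optimality of $\alpha^\star$ we have $H_s(Z_s,\beta_s) = Z_s\cdot\mu_s(\alpha^\star_s,\beta_s) - f_s(\alpha^\star_s,\beta_s)$. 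The drift therefore equals $Q_s\cdot\mu_s(\alpha^\star_s,\beta_s) - f_s(\alpha^\star_s,\beta_s) - \bar H_s(Q_s)\le 0$, by the definition of $\bar H$ as a supremum over $(a,b)$.

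Consequently $\tilde D_s := e^{\int_s^T r}D_s$ is a $\P^{\alpha^\star,\beta}$-local supermartingale on $[t,T]$, with terminal value $\tilde D_T = \ell - Y_T$ because $P_T=\ell$. The stochastic integral is a true martingale, since $Q,Z\in\H$ and Remark~\ref{rem:Lq_equal_Lp} transfers $\H^p(\P_0)$-integrability to $\H^q(\Pc)$ and hence BDG applies. Then $\E^{\alpha^\star,\beta}[\ell - Y_T\mid\F_t] \le (P_t - y)e^{\int_t^T r}$. For the upper bound I would use that $g\le g^{\conc}$, that $g^{\conc}$ is concave and non-decreasing (because $g$ is non-decreasing), and conditional Jensen:
\[
\E^{\alpha^\star,\beta}[g(\ell - Y_T)\mid\F_t]\le g^{\conc}\big(\E^{\alpha^\star,\beta}[\ell - Y_T\mid\F_t]\big)\le g^{\conc}\big((P_t-y)e^{\int_t^Tr}\big).
\]
Jensen is legitimate since $g^{\conc}$ has linear growth and $\ell-Y_T$ is integrable. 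Taking the essential supremum over $(Z,\beta,\alpha^\star)$ gives the upper bound.

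For the lower bound I would choose $Z=Q$ and $\beta^\star_s\in\arg\max_b H_s(Q_s,b)$, which is measurable by selection using compactness of $B$ and continuity. For any $\alpha^\star\in\hat{\Ac}(Q,\beta^\star)$, the pair $(\alpha^\star_s,\beta^\star_s)$ attains $\bar H_s(Q_s)$. The $dX$ term then cancels ($Q-Z=0$) and the drift vanishes identically, so $\tilde D$ is constant pathwise. This gives $\ell - Y_T = (P_t-y)e^{\int_t^T r}$ deterministically given $\F_t$, and hence $V_t(y)\ge g\big((P_t - y)e^{\int_t^Tr}\big)$. The admissibility $Q\in\H$ comes from the BSDE solution.

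The main obstacle is the integrability bookkeeping in the upper bound. One must check that the stochastic integral is a true martingale under each $\P^{\alpha^\star,\beta}$, not merely under $\P_0$, and that $g(\ell-Y_T)$ and the conditional expectations are well defined. If uniform integrability fails for a general $Z\in\H$, I would fall back on localizing with stopping times $\tau_n$ and using the supermartingale inequality up to $\tau_n$. I would then pass to the limit using the $\S^p$ bounds on $P$ and $Y$ (the latter from the SDE with $Z\in\H^p$) and dominated convergence.
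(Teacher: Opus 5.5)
Your proposal is correct and follows the paper's proof essentially step for step. You show that the discounted difference $(P_s-Y_s)e^{\int_s^T r(u)du}$ is a $\P^{\alpha^{\star},\beta}$-supermartingale, with the stochastic integral a true martingale via Remark~\ref{rem:Lq_equal_Lp} and BDG. The upper bound then follows from $g\le g^{\conc}$, the monotonicity of $g^{\conc}$ and Jensen, and the lower bound from the choice $Z=Q$, $\beta^{\star}_s\in\arg\max_{b}H_s(Q_s,b)$, which makes this process constant.

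The only cosmetic difference is in how you show the drift is nonpositive. You read it off directly from the definition of $\bar H$ as a supremum over $(a,b)$. The paper instead uses the subgradient inequality $H_s(Q_s,\beta_s)\ge H_s(Z_s,\beta_s)+(Q_s-Z_s)\cdot\mu_s(\alpha^{\star}_s,\beta_s)$ together with $H_s(Q_s,\beta_s)\le\bar H_s(Q_s)$. The two arguments are equivalent, and your conditional version for general $t$ is exactly what the paper leaves to the reader.
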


\begin{proof}
For simplicity, we present the proof for $t=0$. For an arbitrary $t\in[0,T]$, the same argument applies.    
\textit{Step 1.} For every $(Z,\beta)\in\H\times\B$, we define $$M^{y,Z,\beta}_t:=( P_t - Y^{y,Z,\beta}_t)e^{\int_t^Tr(s)ds},  ~~t\in[0,T].$$ We first prove, for every $\alpha^{\star}\in\hat{\Ac}\big(Z,\beta\big)$, $M^{y,Z,\beta}$ is a $\P^{\alpha^{\star},\beta}$-supermartingale. Let $\partial_z H(z,b)$ denote the subgradient of $H$ with respect to $z$, then $\mu_t(\alpha^{\star}_t(z,b),b)\in \partial_z H(z,b)$. Define
$$\mathscr{H}^{Z,\beta}_t:= H_t(Z_t,\beta_t) - \bar{H}_t( Q_t)+( Q_t-Z_t)\mu_t(\alpha^{\star}_t(Z_t,\beta_t),\beta_t).$$ By our assumption $A$ is compact and $(a,b)\mapsto h_t(\x,z,a,b)$ is continuous, by Berge's maximum theorem, we know $b\mapsto H_t(z,b)$ is continuous. Since $B$ is also compact, for every $z\in\R^d$, there exists  $\hat{b}\in\arg\max_{b\in\B}H_t(z,b)$. By standard measurable selection argument, there exists $\beta^\star\in\B$, s.t. $\beta^\star_t\in\arg\max_{b\in B}H_t( Q_t,b)$, $ dt\otimes d\P_0$-a.e. Since $H$ is convex in $z$, we have\begin{align*}
\mathscr{H}^{Z,\beta}_t\leq  H_t(Z_t,\beta_t) - H_t( Q_t,\beta_t)+( Q_t-Z_t)\cdot \mu_t(\alpha^{\star}_t(Z_t,\beta_t),\beta_t)\leq 0,
\end{align*}
and $\mathscr{H}^{ Q,\beta^\star}_t \!=\! 0$ for every $\beta^\star\!\in\!\B$ such that $\beta^{\star}_t\!\in\! \arg\max_{b\in B}H_t( Q_t,b)~ dt\!\otimes\!d\P_0$-a.e. By Itô's formula,\begin{align*}
        dM^{y,Z,\beta}_t = e^{\int_t^Tr(s)ds}\Big[( Q_t-Z_t)dW_t^{\alpha^{\star},\beta}+\mathscr{H}^{Z,\beta}_tdt\Big].
    \end{align*}
    Since $Z\in\H^p\big(\P^{\alpha^{\star},\beta}\big)$ for some $p>1$ and $ Q\in \cap_{q\in(1,p_f)}\H^q\big(\P^{\alpha^{\star},\beta}\big)$, $ Q - Z\in \H^{p\wedge \frac{p_f+1}{2}}\big(\P^{\alpha^{\star},\beta}\big)$. Notice that $r$ is bounded, we obtain $\big\{\int_0^te^{-\int_0^sr(s')ds'}( Q_s-Z_s)\cdot dW_s^{\alpha^{\star},\beta}\big\}_{t\in[0,T]}$ is a $\P^{\alpha^{\star},\beta}$-martingale, and therefore $M^{y,Z,\beta}$ is a $\P^{\alpha^{\star},\beta}$-supermartingale.

\medskip

\textit{Step 2.}  Since $g$ is non-decreasing, so is $g^{\conc}$. Moreover, $g\leq g^{\conc}$. For every $(Z,\beta)\in\H\times \B$, and $\alpha^{\star}\in\hat{\Ac}\big(Z,\beta\big)$, combine with Jensen's inequality, we derive
$$
\E^{\alpha^{\star},\beta}\Big[g\big(M_T^{y,Z,\beta}\big)\Big]\leq \E^{\alpha^{\star},\beta}\Big[g^{\conc}\big(M_T^{y,Z,\beta}\big)\Big]\leq g^{\conc}\Big(\E^{\alpha^{\star},\beta}\big[M_T^{y,Z,\beta}\big]\Big) \leq g^{\conc}(M_0^{y,Z,\beta}).
$$
Since $M^{y,Z,\beta}_T = \ell - Y_T^{y,Z,\beta}$, we obtain
\begin{align*}
V_0(y)\leq g^{\conc}\Big(( P_0 - y)e^{\int_0^Tr(s)ds}\Big).
\end{align*}

\medskip

\textit{Step 3.} Notice that for every $\beta^{\star}\in\B$ that satisfies $\beta^{\star}_t\in \arg\max_{b\in\B}H_t( Q_t,b)$, we have $dM_t^{y,  Q,\beta^{\star}} = 0$. Therefore
\begin{align*}
    V_0(y)\geq \E^{\alpha^{\star}( Q,\beta^{\star}),\beta^{\star}}\Big[g\big(M_T^{y,  Q,\beta^{\star}}\big)\Big] = g\Big(( P_0 - y)e^{\int_0^Tr(s)ds}\Big).
\end{align*}
\end{proof}

We next state the crucial concavity result of the Principal's continuous utility function in $y$.

\begin{lemma}\label{lem:v_concave_in_y} Assume $g^{\conc}<\infty$,
    then the function $y\mapsto V_0(y)$ is concave.
\end{lemma}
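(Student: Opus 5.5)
Fix $y_1,y_2\in\R$ and $\lambda\in(0,1)$, and set $y:=\lambda y_1+(1-\lambda)y_2$. The goal is $V_0(y)\ge \lambda V_0(y_1)+(1-\lambda)V_0(y_2)$. The natural device is a randomization over the two starting points, implemented by the Principal with the Brownian information available right after time $0$. Take $\varepsilon$-optimal controls $(Z^i,\beta^i)$, with associated optimal responses $\alpha^{i}$, for $V_0(y_i)$, $i=1,2$. I would build a single control that starts from $y$ and, on a short interval $[0,\delta]$, sends the continuation value to a random level $\Upsilon\in\{y_1^\delta,y_2^\delta\}$. This level will be $y_1^\delta$ with probability $\lambda$ and $y_2^\delta$ with probability $1-\lambda$. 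From time $\delta$ on, the control restarts the strategy $(Z^i,\beta^i)$, shifted in time and applied to the increment path $(X_{\delta+s}-X_\delta)_{s\ge0}$. If the lottery is chosen so that $Y_\delta$ has the right mean, the value at time $0$ is roughly $\lambda V_0(y_1)+(1-\lambda)V_0(y_2)$. The error comes only from the time shift and the discount over $[0,\delta]$, and it vanishes as $\delta\to0$.

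\emph{Construction of the lottery.} On $[0,\delta]$ I will use the SDE \eqref{eq:SDE_of_Y} under the optimal response. There $dY_t=Z_t\cdot dW^{\alpha^\star,\beta}_t+\big(r(t)Y_t-H_t(Z_t,\beta_t)+Z_t\cdot\mu_t(\alpha^\star_t,\beta_t)\big)dt$, and the drift equals $r(t)Y_t+f_t(\alpha^\star_t,\beta_t)$. For a bounded target this drift is bounded once the control is bounded, and $f$ is integrable. A martingale representation argument (or a bang-bang control $Z$ in direction $e_1$ with an explicit exit time) will produce $Z$ on $[0,\delta]$ such that $Y_\delta$ takes the values $y_1+o(1)$ with probability $\lambda$ and $y_2+o(1)$ with probability $1-\lambda$. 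Concretely, I would take $Y_\delta=\mathbf 1_{\{W^{e}_\delta\in E\}}(\cdot)$ plus a drift correction, with $E$ a half-line of $\P$-probability $\lambda$. Because the drift depends on $Z$ through the optimal response, the measure is endogenous. I would handle this by solving the backward equation on $[0,\delta]$ with terminal value $\Upsilon$. This is a Lipschitz BSDE, since $H$ is Lipschitz in $z$, and by Lemma~\ref{lem:bound_VP}-type estimates its initial value is $y+O(\delta)$. So I will use initial points $y^\delta\to y$ and rely on continuity.

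\emph{Continuity in $y$.} The previous step requires $V_0$ to be continuous, or at least lower semicontinuous along $y^\delta\to y$. By linearity of \eqref{eq:SDE_of_Y} in $y$, shifting $y$ shifts $Y_T$ by a deterministic constant, $Y^{y',Z,\beta}_T=Y^{y,Z,\beta}_T+(y'-y)e^{\int_0^Tr}$, and the set $\hat{\Ac}(Z,\beta)$ is unchanged. Since $g$ is non-decreasing, $V_0(y)$ is non-increasing. This monotonicity plus the bounds of Lemma~\ref{lem:bound_VP} do not give full continuity, so I would instead avoid it by making the lottery land exactly on $y_1,y_2$. To do so, I will choose the targets $\Upsilon$ for the BSDE so that the initial value equals exactly $y$, adjusting the two levels by deterministic constants that tend to $0$ with $\delta$. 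Then I use the monotonicity in $y$ and take the limits $\delta\to0$ and $\varepsilon\to0$ on the side favorable to the inequality.

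\emph{Main obstacle.} The hardest step is the value after time $\delta$. By the conditional version of the problem, it is $V_\delta(y_i)$, and I must show $\E[V_\delta(y_i)\,|\,\text{lottery}]\ge V_0(y_i)-o(1)$. This is where the shift and pasting of $\varepsilon$-optimal controls on the increment path enters. It also needs a mild regularity of $\ell$ with respect to the time shift: $\ell(\x)$ against $\ell$ evaluated on the shifted path. Measurability of the pasting and the independence of the lottery event from the future increments will take care. Because $\ell$ is only measurable, I expect to need a dynamic programming principle $V_0(y)\ge\E[V_\delta(Y_\delta)]$ together with an $L^1$-continuity of $\delta\mapsto V_\delta$ derived from the BSDE stability of $P$ and the bound $g^{\conc}\le C(1+|x|)$. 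This is the point I would work out first.
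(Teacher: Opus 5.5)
\textbf{There is a genuine gap: the step you postpone is the actual content of the lemma, and neither of your suggested mechanisms proves it.}

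Your architecture is the paper's: push the continuation value in vanishing time to $y_1$ or $y_2$ with the right probabilities, then use the dynamic programming inequality. The randomization step itself is workable. A BSDE on $[0,\delta]$ with terminal value $y_1\mathds{1}_E+y_2\mathds{1}_{E^c}$, $E$ defined through $X^1_\delta-X^1_0$, plus the monotonicity of $V_0$ to absorb the $o(1)$ error in the starting point, is essentially the device of Step~2 in the proof of Theorem~\ref{thm:solve_PA_RS}. The paper instead lets $Y$ exit $(y_2,y_1)$ under $Z^n=(n,0,\dots,0)$. This lands exactly on $y_1,y_2$ with probabilities tending to $\tfrac12$, and the paper concludes by midpoint concavity plus local boundedness.

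The missing step is $\E[V_\delta(y_1)\mathds{1}_E]\ge\lambda V_0(y_1)-o(1)$ under the lottery measure.
\begin{itemize}
\item Shifting and pasting $\varepsilon$-optimal controls on the increment path cannot reproduce $V_0(y_i)$. The coefficients $\mu_t(\x,\cdot)$, $f_t(\x,\cdot)$, $r(t)$ are time-inhomogeneous and path-dependent, and $\ell$ is only a measurable functional of the whole path.
\item Your fallback is circular. Lemma~\ref{lem:bound_VP} only gives $g(x_\delta)\le V_\delta(y)\le g^{\conc}(x_\delta)$ with $x_\delta:=(P_\delta-y)e^{\int_\delta^T r(s)ds}$. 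Stability of $P$ therefore controls the two bounds, but not $V_\delta(y)$ wherever $g<g^{\conc}$. Closing it would require $V_\delta(y)=g^{\conc}(x_\delta)$, a conditional version of Theorem~\ref{thm:solve_PA}, which is exactly what the present lemma is used to prove.
\item The bound $g^{\conc}\le C(1+|x|)$ is an upper bound, so it is of no use in what is a $\liminf$ argument.
\end{itemize}

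What you need is a one-sided, local-in-time ingredient together with control of negative parts. The paper uses three things.
\begin{itemize}
\item[(i)] The a.s.\ lower semicontinuity $\liminf_{t\downarrow0}V_t(y_1)\ge V_0(y_1)$. As the paper argues, this comes from $V_\cdot(y_1)$ being an essential supremum of an upward-directed family of right-continuous processes, together with the triviality of $\F_0$. On $G_{\eta,\varepsilon}:=\{\inf_{s\in(0,\varepsilon)}V_s(y_1)>V_0(y_1)-\eta\}$ it gives a pathwise bound at the landing time. Hence you need neither independence between the lottery event and the future nor any regularity of $\ell$.
\item[(ii)] Lemma~\ref{lm:sup_P0_equal_sup_Pn}, which transfers $\P_0(G^c_{\eta,\varepsilon})\to0$ to the endogenous measures uniformly.
\item[(iii)] A lower bound on negative parts, needed because $g$ may be unbounded below. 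The paper localizes with $\theta_K:=\inf\{t:|P_t-P_0|=K\}\wedge T$ and uses the left inequality of Lemma~\ref{lem:bound_VP} to get $V_t(Y_t)\ge -M_K$ at the stopping time $\tau^n\wedge\theta_K$. This relies on $Y$ staying in $[y_2,y_1]$ before the exit time. Your BSDE lottery does not automatically provide that on $[0,\delta]$, since $f$ is only $L^p$-integrable, unless you also stop it when $Y$ leaves a neighbourhood of $[y_2,y_1]$.
\end{itemize}
Without (i)--(iii) in place of your ``main obstacle'', the proposal does not yet prove the lemma.
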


\begin{proof}
    See Section \ref{sec:proof_of_concavity}.
\end{proof}

\begin{proof}[Proof of Theorem \ref{thm:solve_PA}] 
Let $x_R:=(P_0-R)e^{\int_0^T r(s)ds}$, by the left hand side inequality in Lemma~\ref{lem:bound_VP}, it follows from Lemma~\ref{lem:v_concave_in_y} that $V^P\geq g^{\conc}(x_R)$. Then equality follows from the right hand side inequality in Lemma~\ref{lem:bound_VP}. When $x_R\in\{g=g^{\conc}\}$, Step 3 of Lemma~\ref{lem:bound_VP} gives:
$
J^P(R,Q,\beta^\star)=g(x_R)=g^{\conc}(x_R)=V^P.
$
Therefore, the supremum is attained at $(Z,\beta)=(Q,\beta^\star)$, and the corresponding contract $(\xi^\star,\beta^\star)$ is optimal.
\end{proof}

\begin{remark}\label{rmk:relax_boundedness_mu}
The boundedness of $\mu$ is an important assumption which ensures the equivalence between the measures $\P^{\alpha,\beta}$, the Lipschitz continuity of $\bar{H}_t(x, z)$ in $z$, so that classical $L^p$ BSDE theory applies for $(P,Q)$, and to derive the following crucial uniform $L^p$ bound:
$$\sup_{n\in\N}\P^n(\Gamma) \leq \P_0(\Gamma)^{\frac{1}{p}}
\sup_{n\in\N}
\Big\|\frac{d\P^n}{d\P_0}\Big\|_{\L^q},
~~\text{for every}~\Gamma\in\F_T,
~~\frac{1}{p}+\frac{1}{q} = 1.
$$
we may formulate weaker assumptions on $\mu$ as long as the three last properties are preserved. We may assume for instance that
$ L_t^{\alpha,\beta}
        :=
        \mathcal E_t\brac[3]{
        \int_0^\cdot
        \mu_s(X,\alpha_s,\beta_s)\cdot dW_s}$
is a strictly positive uniformly integrable martingale, for all $(\alpha,\beta)\in \A\times \B$, and that $\sup_{(\alpha,\beta)\in\A\times\B}
        \E^{\P_0}\mbrac[3]{\brac[2]{L_T^{\alpha,\beta}}^q}<\infty$, for some $q>1$. As for the $\L^p-$backward SDE theory, one may replace it by another appropriate setting, see e.g. the quadratic setting in Section \ref{sec:cara} below.
\end{remark}

\subsection{Controlled volatility}
    Our approach can also be extended to the case where the volatility is controlled. We briefly indicate here why the same reduction mechanism still applies. We adopt the same framework as in Chiusolo and Hubert
\cite{chiusolo2025newapproachprincipalagentproblems} with unit discount factor. The output process satisfies 
\begin{equation*}
    dX_t = \sigma_t(X,\nu_t)\big(\mu_t(X,\nu_t)dt+ dW^{\nu}_t\big),\qquad \P^{\nu}\text{-a.s.}
\end{equation*}
for some probability measure $\P^{\nu}$ and some $\P^{\nu}$-Brownian motion $W^{\nu}$, and the control process $\nu\in\mathfrak{U}$, the set of $\mathbb{F}$-optional processes with values in a compact subset $U$ of a finite dimensional space. The Agent and the Principal's value functions are given by
\begin{align*}
V^A(\xi):=\sup_{\nu\in \mathfrak{U}}\E^{\nu}\bigg[\xi - \int_0^Tf_t(X,\nu_t)dt \bigg],\qquad
V^P:=\sup_{\xi:V^A(\xi)\geq R}\E^{\nu^{\star}(\xi)}\Big[g(\ell - \xi)\Big],
\end{align*}
where $\nu^{\star}(\xi)$ is the Agent's optimal response, assumed to be unique for simplicity.
For every $y,Z,\Sigma$, we consider the forward SDE: $$
dY^{y,Z,\Sigma}_t = Z_t\cdot dX_t - H_t(Z_t,\Sigma_t)dt, \quad  \P^{\nu}\text{-a.s.}   \qquad Y^{y,Z,\Sigma}_0 = y, 
$$
for some $\nu\in U^\circ_t(\Sigma_t)$, where $$H_t(z,\Theta):=\sup_{ P\in U^\circ_t(\Theta)}\big[z\cdot\sigma_t( P)\mu_t( P) - f_t( P)\big],  \qquad U^\circ_t(\Theta):=\Big\{ P\in   U: \sigma_t( P)\sigma^{\top}_t( P) = \Theta\Big\}.$$
We have the convention that the supremum over an empty set is $-\infty$. Let $S_d$ denote the set of $d\times d$ positive definite matrices and $\mathcal{S}_d$ denote the set of $\mathbb{F}$-optional processes with values in $S_d$. Under the assumptions of
\citet*[Theorem~3.10]{chiusolo2025newapproachprincipalagentproblems},
the original problem is equivalent to the first-best reformulation $$
V^P(R)=\sup_{y\geq R}\;\sup_{\Sigma\in \mathcal{S}_d}\;\sup_{Z\in \H}\E^{\nu^{\star}(Z,\Sigma)}\Big[g\big(\ell - Y_T^{y,Z,\Sigma}\big)\Big].
$$
Let $\mathfrak{U}^{\circ}(\Sigma):= \big\{\nu\in  \mathfrak{U}:\nu_t\in U^{\circ}(\Sigma_t),~\P^{\nu}\text{-a.s.} \big\}$. Now for every $\Sigma\in\mathcal{S}_d$ such that $  \mathfrak{U}^{\circ}(\Sigma)\neq \emptyset$, consider the BSDE:
$$
d P^{\Sigma}_t =  Q^{\Sigma}_t\cdot dX_t - H_t\big( Q^{\Sigma}_t,\Sigma_t\big)dt,    \qquad  P^{\Sigma}_T = \ell,\qquad \P^{\nu}\text{-a.s.}
$$
for some $\nu\in U^\circ_t(\Sigma_t)$.
Then the same argument as in Theorem \ref{thm:solve_PA} yields: $$\sup_{Z\in\H}\E^{\nu^{\star}(Z,\Sigma)}\Big[g\big(\ell - Y_T^{y,Z,\Sigma}\big)\Big] = g^{\conc}( P^{\Sigma}_0 - y),
~~\text{and then}~~
V^P(R)=g^{\conc}\bigg(\sup_{\Sigma\in\mathcal{S}_d:  \mathfrak{U}^{\circ}(\Sigma)\neq\emptyset} P^{\Sigma}_0 - R\bigg),$$ 
by the non-decrease of $g$.
Therefore, even in the presence of volatility control, the Principal's value can still be characterized through the supremum of a family of BSDEs. In particular, if $  U = A\times B$, $\nu =(a,b)$, $\mu(\cdot,\nu) = \mu(\cdot,a)$ and $\sigma(\cdot,\nu) = \sigma(\cdot,b)$, this will coincide with the framework in \citet*{cvitanic2017dynamicprogrammingapproachprincipalagent}. In this case, consider the BSDE:
$$
d P^{\beta}_t =  Q^{\beta}_t\cdot dX_t - \tilde{H}_t\big( Q^{\beta}_t,\beta_t\big)dt,    \qquad  P^{\beta}_T = \ell,\qquad \P^{\nu}\text{-a.s.}
$$
where $\tilde{H}_t(z,b):= \sup_{a\in A}\big(z\cdot\sigma_t(b)\mu_t(a) - f_t(a,b)\big)$. Notice that $H_t(z,\Sigma_t) =\sup_{\beta\in  \mathfrak{U}^{\circ}(\Sigma)} \tilde{H}_t(z,\beta_t)$, we have $$
 P_0^{\Sigma}=\sup_{\beta\in  \mathfrak{U}^{\circ}(\Sigma)} P_0^{\beta},
~~\text{and therefore}~~
V^P(R)=g^{\conc}\Big(\sup_{\beta\in B} P^{\beta}_0 - R\Big).
$$

This is consistent with the second-order BSDE formulation in \cite{cvitanic2017dynamicprogrammingapproachprincipalagent}. In the Markovian case, similar to the previous example \ref{eg:pde_markovian}, if we let $u(t,x) = \E_{t,x}\big[\sup_{\beta} P^{\beta}_t\big|\F_t]$, under appropriate assumptions, we can obtain the corresponding HJB equation:
\begin{align}\label{eq:reduced_fully_non_linear_linear_PDE_of_w}
-\partial_t u- \mathcal{H}\big(\cdot,D u,D^2 u\big) = 0,
~~\text{on}~[0,T)\times\R^d,~~
\text{and}~u(T,\cdot) = \ell
~~\text{on}~\R^d,
\end{align}
where $\mathcal{H}\big(t,x,p,\Theta\big):=\sup_{a\in A,b\in B}\Big[\sigma(t,x,b)\mu(t,x,a)\cdot p^{\top} +\tfrac{1}{2}\tr{(\sigma\sigma^{\top})(t,x,b)\Theta}- f(t,x,a,b)\Big]$. This equation remains fully non-linear PDE, but it is already much simpler than the original HJB equation provided in \cite[Theorem 3.9]{cvitanic2017dynamicprogrammingapproachprincipalagent}.

\subsection{Concavity of the Principal's value function}
\label{sec:proof_of_concavity}
This subsection is dedicated to the proof of Lemma \ref{lem:v_concave_in_y}. As $V_0$ is locally bounded by Lemma~\ref{lem:bound_VP}, it suffices to show that:
$$
V_0(\bar{y})\geq \tfrac{1}{2}\big(V_0(y_1)+V_0(y_2)\big),
~\text{for all fixed}~
y_1>y_2,~\bar{y}:=\frac{y_1+y_2}{2}.
$$
For each $n\in\N$, let $Z^n$ be the constant process defined by $Z_t^n:=(n,0,\cdots,0)$, and let $\beta^0$ be the constant process identically equal to a fixed $b_0\in B$. Take any fixed $\alpha^{n}\in \hat{\Ac}(Z^n,\beta^0)$, we denote $Y^n:= Y^{\bar{y},Z^n,\beta^0}$, $f^n_s:= f_s(\alpha^{n}_s,\beta^0_s)$, $\P^n:=\P^{\alpha^{n},\beta^0}$ and $W^n:=W^{\alpha^{n},\beta^0,1}$. We define the exit time: 
\begin{align}\label{eq:tau_1_tau_2}
\tau^n:= \tau^n_1\wedge \tau^n_2,
~~\text{where}~~
    \tau^n_1:=T\wedge\inf\{s\geq 0:Y^n_s\geq y_1\} 
    ~~\text{and}~~
     \tau^n_2:=T\wedge\inf\{s\geq 0:Y^n_s\leq y_2\}.
\end{align}
\begin{lemma}\label{lm:estimate_tau_n}
$\lim_{n\to\infty}\tau^n=0$, a.s. and $\lim_{n\to\infty}\P^n[ \tau^n =  \tau^n_1]=\lim_{n\rightarrow\infty} \P^n[\tau^n = \tau^n_2] =\frac{1}{2}$.
\end{lemma}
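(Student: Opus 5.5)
Under $\P^n$, the first component of $X$ satisfies $dX^1_t=\mu^1_t(X,\alpha^n_t,\beta^0_t)\,dt+dW^n_t$. Hence $Y^n$ obeys
\begin{align*}
dY^n_t = n\,dW^n_t + \big(n\mu^1_t(\alpha^n_t,\beta^0_t)+r(t)Y^n_t - H_t(Z^n_t,\beta^0_t)\big)dt .
\end{align*}
By the definition of $\alpha^n$, we have $H_t(Z^n_t,\beta^0_t)=n\mu^1_t(\alpha^n_t,\beta^0)-f^n_t$. The drift therefore reduces to $r(t)Y^n_t+f^n_t$, and the $O(n)$ contributions cancel exactly. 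This cancellation is the key structural fact. It gives
\begin{align*}
Y^n_t=\bar y+nW^n_t+\int_0^t\big(r(s)Y^n_s+f^n_s\big)ds .
\end{align*}
Rescaling by $n$, the process $\tilde Y^n_t:=(Y^n_t-\bar y)/n$ equals $W^n_t+\frac1n\int_0^t(rY^n+f^n)ds$, which is a Brownian motion plus a drift that is small in $1/n$. The two barriers sit at $\pm\delta/n$ with $\delta:=(y_1-y_2)/2$. The plan is to compare $\tau^n$ with the exit time $\sigma^n$ of $W^n$ from $(-\delta/n,\delta/n)$ up to a vanishing error. By Brownian symmetry, $\sigma^n$ has the same law under $\P^n$ for every $n$ and hits each side with probability $1/2$.

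\emph{Convergence of $\tau^n$ to zero.} On $[0,\tau^n]$ we have $Y^n\in[y_2,y_1]$, so $rY^n$ is bounded. Hence $nW^n_{t}=Y^n_t-\bar y-\int_0^t(rY^n+f^n)ds$ for $t\le\tau^n$, and therefore $n|W^n_t|\le \delta+CT+\int_0^T|f^n_s|ds$ on $\{t\le \tau^n\}$. To pass to a $\P_0$-a.s. statement, I would transfer to $\P_0$ directly. Under $\P_0$, $X^1$ is a fixed Brownian motion $B$, and $W^n=B-\int\mu^1$, so $|W^n_t-B_t|\le \|\mu\|_\infty t$. On $\{\tau^n> t\}$ we then get $|B_t|\le \|\mu\|_\infty t+(\delta+Ct+\int_0^t|f^n_s|ds)/n$. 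The $f$-term needs control because $f^n$ depends on $n$. Using \eqref{eq:f_integ_assump} together with the uniform $L^q$ bound on densities from Remark~\ref{rmk:relax_boundedness_mu}, $\int_0^T|f^n|ds/n\to0$ in $\P_0$-probability. Along a subsequence it converges almost surely, and if needed I would instead assume the convergence for the whole sequence, or state the result in probability. Now fix $\epsilon>0$. Since $\sup_{t\le\epsilon}|B_t|>\|\mu\|_\infty\epsilon$ eventually fails only on a null set as $\epsilon$-scales vary, the law of the iterated logarithm gives $\sup_{s\le \epsilon}|B_s|/s=\infty$ a.s. Consequently, for large $n$ the inequality cannot hold for all $t\le\epsilon$, so $\tau^n\le\epsilon$. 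This yields $\tau^n\to0$.

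\emph{Exit probabilities.} At $\tau^n$, say on $\{\tau^n=\tau^n_1\}$ with $\tau^n<T$, we have $nW^n_{\tau^n}=\delta-\int_0^{\tau^n}(rY^n+f^n)ds$. Let $\epsilon_n:=\int_0^{\tau^n}(C+|f^n_s|)ds$. Then $\epsilon_n\to0$ in probability, uniformly under the $\P^n$ by the same Hölder bound. I would compare with the exit times of $W^n$ from the shifted intervals $(-(\delta\pm\eta)/n,(\delta\pm\eta)/n)$, together with the nested asymmetric intervals $(-(\delta+\eta)/n,(\delta-\eta)/n)$ and its mirror. On $\{\epsilon_n\le\eta\}$, the event $\{\tau^n=\tau^n_1\}$ is sandwiched between the events that $W^n$ exits the asymmetric intervals at the top. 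Those probabilities are exactly $(\delta\pm\eta)/(2\delta)$ by the gambler's ruin formula under $\P^n$, where $W^n$ is a Brownian motion. The event $\{\tau^n=T\}$ has vanishing probability by the first part, transferred through the uniform density bound. Letting $n\to\infty$ and then $\eta\to0$ gives the limit $1/2$, and the same argument applies to $\tau^n_2$.

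The main obstacle is the uniform control of $\frac1n\int_0^{\tau^n}|f^n|ds$, because $\alpha^n$ varies with $n$. I expect the integrability assumption \eqref{eq:f_integ_assump}, which is uniform over controls, together with Remark~\ref{rem:Lq_equal_Lp}, to handle it.
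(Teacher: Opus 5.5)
\textbf{Exit probabilities.} This part is essentially the paper's Step 2. Off the set where $\big|\int_0^{t}(rY^n+f^n)ds\big|$ exceeds $\eta$ on $[0,\tau^n]$, you compare $\{\tau^n=\tau^n_1,\tau^n<T\}$ with gambler's-ruin events for $nW^n$ on intervals shifted by $\eta$. This gives $\tfrac12\pm\eta/(y_1-y_2)$.

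\textbf{The claim $\tau^n\to0$.} Here you take a different route from the paper. The paper proves a quantitative tail bound $\P^n[\tau^n>t]\le Cn^{-p(1-\epsilon)}$. It gets this from the small-ball estimate for $\sup_{s\le t}|W^n_s|$ and Markov's inequality on $a_n/n$ with $\gamma=n^{-\epsilon}$, then transfers the bound to $\P_0$ and applies Borel--Cantelli. You instead argue pathwise under $\P_0$, using $|W^n_t-X^1_t|\le\|\mu\|_\infty t$ and the LIL at $0$. That route is sound, but it needs $\frac1n\int_0^T|f^n_s|ds\to0$ $\P_0$-a.s.\ along the \emph{whole} sequence. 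You only obtain convergence in probability (a.s.\ along a subsequence), and you propose to assume the rest or to weaken the claim. As written, the a.s.\ part of the statement is therefore not proved. What you have is $\tau^n\to0$ in probability, which is in fact all that the proof of Lemma~\ref{lem:v_concave_in_y} later uses.

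\textbf{The missing step is summability.} By \eqref{eq:f_integ_assump}, Jensen, and H\"older with the uniformly bounded moments of $d\P_0/d\P^n$ (bounded $\mu$, as in Remark~\ref{rem:Lq_equal_Lp}), you get $\sup_n\E^{\P_0}\big[(\int_0^T|f^n_s|ds)^{p'}\big]<\infty$ for some $p'>1$. Markov's inequality then gives $\P_0\big[\frac1n\int_0^T|f^n_s|ds>\eta\big]\le C\eta^{-p'}n^{-p'}$, which is summable, so Borel--Cantelli yields a.s.\ convergence for the full sequence. With this your argument closes. On $\{\tau^n>\epsilon\}$ one has $\sup_{s\le\epsilon}\big(|X^1_s|-\|\mu\|_\infty s\big)\le\big(\delta+C\epsilon+\int_0^T|f^n_s|ds\big)/n$. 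The right side tends to $0$ a.s., while the left side is a.s.\ strictly positive, so $\tau^n\le\epsilon$ eventually. This $n^{-p'}$ summability, with $p'>1$ inherited from $p>1$ in \eqref{eq:f_integ_assump}, is exactly what the paper's Step 1 exploits; the paper just places it in the tail of $\tau^n$ rather than in that of $\frac1n\int_0^T|f^n_s|ds$.

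\textbf{Two small inaccuracies.} First, the exit time $\sigma^n$ does not have an $n$-independent law: it scales like $n^{-2}$, and only the exit side has an $n$-independent law. Second, the sentence about ``$\epsilon$-scales'' should simply say that $\sup_{s\le\epsilon}\big(|X^1_s|-\|\mu\|_\infty s\big)>0$ a.s.
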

\begin{proof}
{\bf 1.} We start with the first claim. By assumption on $f$, we have $\sup_{n\in\N}\E^{\P^n}\Big[\int_0^T|f^n_t|^pdt\Big]<\infty$, for some $p>1$. Let $\delta = \tfrac{y_1-y_2}{2}$, and recall that $Y^n_s - \bar{y}=\int^s_0\big(f^n_{u} + r(u)Y^n_u\big)du + nW^n_s$, then we estimate for every $t\in (0,T)$,
    \begin{align*}
        \P^n[\tau^n > t] &= \P^n\bigg[\sup_{s\in[0,t]}|Y^n_s - \bar{y} |< \delta\bigg]\\
        &\leq \P^n\bigg[\Big\{ n\sup_{s\in[0,t]}\big|W^n_s \big|-\int^{t}_0|f^n_{u}|du - \|r\|_{\infty}\int^{t}_0|Y^n_u|du < \delta\Big\}
        \cap\Big\{\sup_{s\in[0,t]}|Y^n_s - \bar{y} |< \delta\Big\}\bigg] \\
        &\leq \P^n\bigg[ n\sup_{s\in[0,t]}\big|W^n_s \big|\leq a_n\bigg];
        ~~\text{where}~~a_n:= \int^{t}_0|f^n_{u}|du + t \|r\|_{\infty}(\delta+\abs{\bar{y}}) + \delta.
    \end{align*}
Notice that $C_A:=\sup_{n\in\N}\E^{\P^n}[a_n^p]<\infty$, then we may continue the last inequality. For every $\gamma\in\R_+$:
\begin{align*}
\P^n[\tau^n \!>\! t]
\le
    \P^n\bigg[ \sup_{s\in[0,t]}\big|W^n_s \big|\leq\frac{a_n}{n} \bigg]&\leq  \P^n\bigg[ \sup_{s\in[0,t]}\big|W^n_s \big|\leq \gamma\bigg] +\P^n\bigg[ \gamma\leq\frac{a_n}{n}\bigg]
    \leq \frac{4}{\pi}e^{-\frac{C}{\gamma^2}}
    +\frac{C_A}{(n\gamma)^p},
\end{align*}
where $C = \frac{\pi^2t}{8}$. Now let $\gamma = n^{-\epsilon}$, where $\epsilon>0$ is small enough such that $(1-\epsilon)p>1$. Then:  
\begin{align*}
      \P^n[\tau^n > t]
      \leq \frac{4}{\pi}e^{-Cn^{2\epsilon}} + \frac{C_A}{n^{p(1-\epsilon)}}
      \le
      Cn^{-p(1-\varepsilon)},
\end{align*}
for some constant $C$ independent of $n$, which implies by Remark \ref{rem:Lq_equal_Lp} that $\P_0[\tau^n > t]\le C'n^{-p'}$ for some constant $C'$ independent of $n$ and $p'>1$. Then, $\sum_{n=1}^{\infty}\P_0[\tau^n> t]<\infty$, and we conclude by the Borel-Cantelli lemma that $\tau^n\longrightarrow0$ a.s.

\medskip
\noindent {\bf 2.} For every $\eta\in(0,\frac{y_1-y_2}{2})$, let $G^n_{\eta}:=\big\{\omega\in\Omega:\,\sup_{t\in[0,\tau^n]}\big|\int_0^{t}\big(f^n_s+r(s)Y^n_s\big)ds\big|> \eta\big\}$. By the first step of the current proof, $\lim_{n\to\infty}\P^n[G_\eta^n]=0$ for all $\eta>0$. Denote $
    \sigma^n_a:=\inf\big\{s\geq 0: nW^n_s+\bar{y}=a\big\}$, for all $n\in\N$, $a\in\R$, and let us show that 
    \begin{equation}
      \label{eq:set_subset_G}
    \big\{\sigma^n_{y_1-\eta}>\sigma^n_{y_2-\eta}\big\}\cap \big\{\tau^n = \tau^n_1,\tau^n<T\big\}\subset G^n_\eta.
      \end{equation}
    Take $\omega\in\big\{\sigma^n_{y_1-\eta}>\sigma^n_{y_2-\eta}\big\}\cap \big\{\tau^n = \tau^n_1,\tau^n<T\big\}$, assume $\omega\notin G^n_\eta$, then $\sup_{t\in[0,\tau^n]}\Big|\int_0^{t}\big(f^n_s(\omega)+r(s)Y^n_s(\omega)\big)ds\Big|\leq \eta$. As $Y_{\tau^n}^n(\omega)=y_1$, we have $$\bar{y}+ nW^n_{\tau^n} (\omega)=Y_{\tau^n}^n(\omega) -\int_0^{\tau^n}\big(f^n_s(\omega)+r(s)Y^n_s(\omega)\big)ds\geq y_1-\eta.$$
    Since $\bar{y}<y_1-\eta $, continuity of $W^n$ gives 
$\tau^n(\omega)\geq \sigma_{y_1-\eta}^n(\omega) >\sigma^n_{y_2-\eta}(\omega)  $. On the other hand, since $\omega\notin G_\eta^n$, we obtain $$
Y_{\sigma^n_{y_2-\eta}}^n(\omega)=\bar{y}+n W_{\sigma^n_{y_2-\eta}}^n(\omega)+\int_0^{\sigma^n_{y_2-\eta}}\big(f^n_s(\omega)+r(s)Y^n_s(\omega)\big)ds \leq y_2-\eta+\eta=y_2.
$$
This implies that $\sigma^n_{y_2-\eta}(\omega)\geq \tau^n_2(\omega)$, and therefore $\tau^n(\omega)> \tau^n_2(\omega)$, which is a contradiction.\\
    
We deduce from~\eqref{eq:set_subset_G} that $
    \big\{\tau^n = \tau^n_1,\tau^n<T\big\}\subset\big\{\sigma^n_{y_1-\eta}\leq\sigma^n_{y_2-\eta}\big\}\cup  G^n_\eta
    $, and then
    \begin{align*}
        \P^n[\tau^n = \tau^n_1,\tau^n<T]&\leq \P^n\big[\sigma^n_{y_1-\eta}\leq\sigma^n_{y_2-\eta}\big]+\P^n[  G^n_\eta]
        = \tfrac{1}{2}+\tfrac{\eta}{y_1-y_2}+\P^n[  G^n_\eta].
    \end{align*}
Thus, letting $n\rightarrow\infty$ and then $\eta\rightarrow0^+$, we obtain$\limsup_{n\rightarrow\infty} \P^n[\tau^n = \tau^n_1,\tau^n<T] \leq \tfrac{1}{2}$ and by symmetry, $\limsup_{n\rightarrow\infty} \P^n[\tau^n = \tau^n_2,\tau^n<T] \leq \tfrac{1}{2}.$ As $\lim_{n\rightarrow\infty} \P^n[\tau^n = T] = 0$
by the first step of this proof, we conclude that
$\lim_{n\rightarrow\infty} \P^n[\tau^n = \tau^n_1] = \lim_{n\rightarrow\infty} \P^n[\tau^n = \tau^n_2] = \tfrac{1}{2}.$
\end{proof}

\begin{lemma}\label{lm:sup_P0_equal_sup_Pn}
    Let $\set{A^i}_{i\in\N}\subset\mathcal{F}_T$. Then $\lim_{i\rightarrow\infty}\P_0[A^i] = 0$ if and only if $\lim_{i\rightarrow\infty}\displaystyle\sup_{n\in\N}\P^n[A^i]=0$. 
\end{lemma}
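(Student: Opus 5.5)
The plan is to prove both implications with H\"older's inequality, using the uniform integrability of the densities $\frac{d\P^n}{d\P_0}$ and of their inverses, exactly as in Remark~\ref{rem:Lq_equal_Lp}. Each $\P^n=\P^{\alpha^n,\beta^0}$ belongs to $\Pc$. Its density is
\[
\Lambda^n_T:=\frac{d\P^n}{d\P_0}\Big|_{\F_T}=\Ec_T\Big(\int_0^\cdot \mu_s(X,\alpha^n_s,\beta^0_s)\cdot dX_s\Big),
\]
and $\mu$ is bounded by $\|\mu\|_\infty$ independently of $n$.

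\emph{Necessity ($\Leftarrow$).} Suppose $\sup_n\P^n[A^i]\to0$; it is enough to have $\P^1[A^i]\to0$. Write
\[
\P_0[A^i]=\E^{\P^1}\big[(\Lambda^1_T)^{-1}\mathbf 1_{A^i}\big].
\]
The inverse density has the form
\[
(\Lambda^1_T)^{-1}=\Ec_T\Big(-\int_0^\cdot\mu\cdot dW^{1}\Big),
\]
where $W^1$ is a $\P^1$-Brownian motion and the integrand $\mu$ is bounded. By the computation in Remark~\ref{rem:Lq_equal_Lp} applied under $\P^1$, this has finite $q$-th moment under $\P^1$ for every $q>1$. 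H\"older's inequality then gives
\[
\P_0[A^i]\le \E^{\P^1}\big[(\Lambda^1_T)^{-q}\big]^{1/q}\,\P^1[A^i]^{1/p}\to0 .
\]

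\emph{Sufficiency ($\Rightarrow$).} Fix $q>1$ and $p$ conjugate to $q$. For each $n$, H\"older's inequality gives
\[
\P^n[A^i]=\E^{\P_0}\big[\Lambda^n_T\mathbf 1_{A^i}\big]\le \big\|\Lambda^n_T\big\|_{\L^q(\P_0)}\,\P_0[A^i]^{1/p}.
\]
The key bound is that the constant $\sup_n\|\Lambda^n_T\|_{\L^q(\P_0)}$ is finite. As in Remark~\ref{rem:Lq_equal_Lp},
\[
(\Lambda^n_T)^q=\Ec_T\Big(\int_0^\cdot q\mu^n_s\cdot dX_s\Big)\,e^{\frac{q(q-1)}2\int_0^T|\mu^n_s|^2ds}\le \Ec_T\Big(\int_0^\cdot q\mu^n_s\cdot dX_s\Big)\,e^{\frac{q(q-1)}2\|\mu\|_\infty^2T}.
\]
The stochastic exponential on the right has $\P_0$-expectation equal to $1$ by Novikov's criterion, since $\mu$ is bounded. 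Hence $\E^{\P_0}[(\Lambda^n_T)^q]\le e^{\frac{q(q-1)}2\|\mu\|_\infty^2T}$ uniformly in $n$. Taking the supremum over $n$ in the H\"older bound gives
\[
\sup_n\P^n[A^i]\le e^{\frac{(q-1)}2\|\mu\|_\infty^2T}\,\P_0[A^i]^{1/p},
\]
which tends to $0$ as $i\to\infty$.

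The main point to watch is that every constant is uniform in $n$, which holds because it depends only on $\|\mu\|_\infty$, $q$ and $T$. This uniformity is the bound recorded in Remark~\ref{rmk:relax_boundedness_mu}. The converse direction needs no uniformity: it uses only one fixed measure, which, given the validity of ($\Rightarrow$) for each fixed $n$, still yields the stated equivalence.
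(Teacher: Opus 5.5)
Your proof is correct and follows essentially the paper's route: H\"older's inequality combined with a uniform-in-$n$ bound on $\E^{\P_0}\big[(d\P^n/d\P_0)^q\big]$, which comes from the boundedness of $\mu$. The paper simply takes $q=2$ (Cauchy--Schwarz) and calls the converse direction obvious. You instead spell that direction out via the inverse density under $\P^1$, although the mere equivalence $\P_0\sim\P^1$ (the $\varepsilon$--$\delta$ form of absolute continuity) would already suffice there.
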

\begin{proof}
Notice that $\P^{n}[A^i] = \E^{\P_0}\big[\mathds{1}_{A^i}\frac{\rd \P^n}{\rd \P_0}\big]\leq \P_0[A^i]^{\frac{1}{2}}\E^{\P_0}\big[\big(\frac{\rd \P^n}{\rd \P_0}\big)^2\big]^{\frac{1}{2}}$. This proves one direction as $\E^{\P_0}\Big[\big(\frac{\rd \P^n}{\rd \P_0}\big)^2\Big]$ is uniformly bounded by the boundedness of $\mu$. The other direction is obvious. 
\end{proof}

\begin{proof}[Proof of Lemma \ref{lem:v_concave_in_y}]
   Fix $K>0$, let $\theta_K:=\inf\set{t\geq 0: \abs{ P_t -  P_0} = K}\wedge T$. By dynamic programming principle, we know \begin{align*}
        V_0(\bar{y})&\geq \E^{\P^n}\Big[V_{\tau^n \wedge \theta_K}\big(Y_{\tau^n\wedge \theta_K}^{n}\big)\Big]\\
        & = \E^{\P^n}\Big[V_{\tau^n_1}(y_1)\ch{\tau^n = \tau^n_1<\theta_K}\Big]
+\E^{\P^n}\Big[V_{\tau^n_2}\big(y_2\big)\ch{\tau^n = \tau^n_2<\theta_K}\Big]
        +\E^{\P^n}\Big[V_{\theta_K}(Y_{ \theta_K}^{n})\ch{\theta_K\leq \tau^n }\Big].
    \end{align*}
    Since $g^{\conc}<\infty$, there exists $C>0$, s.t. $g(x)\leq g^{\conc} \leq C(1+|x|)$. Let $M_K:=\Big|g\brac[2]{ -|P_0-K - y_1|e^{T \|r\|_{\infty}}}\Big|$, by Lemma~\ref{lem:bound_VP}, we know, on $\set{\theta_K\leq \tau^n}$, for every $t\in[0,\theta_K]$,\begin{align}\label{eq:def_Mk}
        V_{t}(Y_{ t}^{n})\geq g\brac[3]{( P_{t} - Y_{ t}^{n})e^{\int_t^T{r(s)}ds}}&\geq g\brac[3]{( P_0-K - y_1)e^{\int_t^T{r(s)}ds}}\geq -M_K.
    \end{align}
     Notice that $\set{\theta_K\leq \tau^n}\subset\set{\theta_K\leq \varepsilon}\cup \set{\varepsilon\leq \tau^n}$ for every $\varepsilon>0$, therefore, 
     $$
     \E^{\P^n}\Big[V_{\theta_K}(Y_{ \theta_K}^{n})\ch{\theta_K\leq \tau^n }\Big]
     \geq 
     -M_K\brac[4]{\sup_{n\in\N}\P^n\mbrac{\theta_K\leq \varepsilon}+\P^n\mbrac{\varepsilon\leq \tau^n}}
     \xrightarrow[n\to\infty]{}
     -M_K\sup_{n\in\N}\P^n\mbrac{\theta_K\leq \varepsilon},
     $$
     by Lemma \ref{lm:estimate_tau_n}, Since $ P$ has continuous path, we have $\lim_{\varepsilon\rightarrow 0^+}\P_0[\theta_K\leq\varepsilon] = 0$, and it follows from lemma~\ref{lm:sup_P0_equal_sup_Pn} that $\lim_{\varepsilon\rightarrow 0^+}\sup_{n\in\N}\P^n\mbrac{\theta_K\leq\varepsilon} = 0$, and we derive from the last inequality that $\liminf_{n\rightarrow\infty}\E^{\P^n}\Big[V_{\theta_K}(Y_{ \theta_K}^{n})\ch{\theta_K\leq \tau^n }\Big] \geq 0$. Now it suffices to prove $$
    \liminf_{n\rightarrow\infty} \E^{\P^n}\Big[V_{\tau^n_1}(y_1)\ch{\tau^n = \tau^n_1,\tau^n<\theta_K}\Big] \geq \frac{V_0(y_1)}{2},
    $$
then by symmetry we will directly get $\liminf_{n\rightarrow\infty} \E^{\P^n}\Big[V_{\tau^n_2}(y_2)\ch{\tau^n = \tau^n_2,\tau^n<\theta_K}\Big] \geq \frac{V_0(y_2)}{2},$ and complete the proof. Since $V$ is defined as the essential supremum of an upper-directed family of right-continuous value processes, it is almost surely lower semicontinuous,i.e.,
$$
\liminf_{t\rightarrow0^+}V_t(y_1)\geq V_0(y_1) , \qquad a.s.
$$
Now fix $\varepsilon,\eta>0$, and set $
G_{\eta,\varepsilon}:= \big\{\omega\in\Omega:\inf_{s\in(0,\varepsilon)}V_s(y_1)>V_0(y_1) - \eta\big\}$. Since $V$ is lower semicontinuous, $
\lim_{\varepsilon\rightarrow 0^+}\P_0\mbrac{G_{\eta,\varepsilon}} = \P_0\mbrac[2]{\cup_{\varepsilon>0}G_{\eta,\varepsilon}} = 1$. Then, by Lemma~\ref{lm:sup_P0_equal_sup_Pn}, 
$\lim_{\varepsilon\rightarrow 0^+}\sup_{n\in\N}\P^n[G^c_{\eta,\varepsilon}]= 0.$
Similar to~\eqref{eq:def_Mk}, we have $\inf_{t\in[0,\tau_1^n]}V_{t}(y_1)\geq -M_K$ on $\set{\tau_1^n<\theta_K}$. Consequently,
\begin{align*}
\E^{\P^n}\Big[V_{\tau^n_1}(y_1)\ch{\tau^n = \tau^n_1<\theta_K}\Big] 
&\geq 
\E^{\P^n}\Big[V_{\tau^n_1}(y_1)\ch{\tau^n = \tau^n_1<\varepsilon\wedge \theta_K}\Big] -M_K\P^n[\tau^n\geq \varepsilon]
\\
&
\hspace{-24mm}\geq\E^{\P^n}\Big[V_{\tau^n_1}(y_1)\mathds{1}_{\{\tau^n = \tau^n_1<\varepsilon\wedge \theta_K\}\cap G_{\eta,\varepsilon}}\Big] -M_K\brac[2]{\P^n[\tau^n\geq \varepsilon]+\P^n[G^c_{\eta,\varepsilon}] } \\
&
\hspace{-24mm}\geq \E^{\P^n}\Big[\inf_{t\in(0,\varepsilon)}V_{t}(y_1)\mathds{1}_{\{\tau^n = \tau^n_1<\varepsilon\wedge \theta_K\}\cap G_{\eta,\varepsilon}}\Big] -M_K\brac[2]{ \P^n[\tau^n\geq \varepsilon]+\P^n[G^c_{\eta,\varepsilon}]}  \\
&
\hspace{-24mm}\geq \E^{\P^n}\Big[\big(V_0(y_1)-\eta\big)\mathds{1}_{\{\tau^n = \tau^n_1<\varepsilon\wedge \theta_K\}\cap G_{\eta,\varepsilon}}\Big] -M_K\brac[2]{\P^n[\tau^n\geq \varepsilon] +\P^n[G^c_{\eta,\varepsilon}] }\\
&\hspace{-24mm}\geq V_0(y_1)\P^n\big(\{\tau^n = \tau^n_1<\varepsilon\wedge \theta_K\}\cap G_{\eta,\varepsilon}\big) -\eta -M_K\brac[2]{ \P^n[\tau^n\geq \varepsilon]+\P^n[G^c_{\eta,\varepsilon}]}\\
&\hspace{-24mm}\geq V_0(y_1)\P^n\big(\{\tau^n \!=\! \tau^n_1\!<\!\varepsilon\wedge \theta_K\}\big) -\eta -M_K\P^n[\tau^n\geq \varepsilon]-\big(\abs{V_0(y_1)}+M_K\big) \P^n[G^c_{\eta,\varepsilon}]\\
&\hspace{-42mm}\geq V_0(y_1)\P^n\big(\{\tau^n \!=\! \tau^n_1\!<\!\varepsilon\}\big) - \abs{V_0(y_1)}\P^n(\theta_K
\leq \varepsilon) -\eta -M_K\P^n[\tau^n\geq \varepsilon]-\big(\abs{V_0(y_1)}+M_K\big) \P^n[G^c_{\eta,\varepsilon}].
\end{align*}
By Lemma \ref{lm:estimate_tau_n}, we know $\P^n[\tau^n = \tau^n_1,\tau^n<\varepsilon]$ converges to $\tfrac{1}{2}$. Pass $n$ to infinity, we have 
\begin{align*}
\liminf_{n\rightarrow\infty}\E^{\P^n}\Big[V_{\tau^n_1}(y_1)\ch{\tau^n = \tau^n_1<\theta_K}\Big]&\geq  \tfrac{1}{2}V_0(y_1)- \abs{V_0(y_1)}\sup_{n\in\N}\P^n(\theta_K
\leq \varepsilon)-\eta \\
&\qquad -\big(\abs{V_0(y_1)}+M_K\big) \sup_{n\in\N}\P^n[G^c_{\eta,\varepsilon}]
\\
&\qquad
\xrightarrow[\varepsilon\searrow 0]{}
\tfrac{1}{2}V_0(y_1) -\eta
\xrightarrow[\eta\searrow 0]{}  \tfrac{1}{2}V_0(y_1).
\end{align*}
\end{proof}

\begin{remark}
Examining the last proof, we see that Lemma~\ref{lem:v_concave_in_y} holds true when the discount factor $r$ is bounded and may depend on the state $X$ and the control $\alpha$, $\beta$.
\end{remark}

\section{Principal--Agent problem under Exponential Utility}\label{sec:cara}
 It is well known in the economics literature that, when the Agent is risk-neutral, the second-best Principal Agent problem reduces to the first-best problem; see e.g. \citet*[Proposition 3.2]{nizar_random_horizon_principal_agent}. Our objective in this section is to show that the explicit solution in Theorem~\ref{thm:solve_PA} is not related to the Agent's risk-neutrality.  To illustrate this fact, we shall show that the same method also applies when both the Agent and the Principal have exponential utility functions, a situation where the second-best problem no longer coincides with the first-best problem.
 
\subsection{Problem Formulation}
    In this section we additionally assume $f$ and $\ell$ are bounded. We consider the case where the Agent and the Principal have exponential utility functions, i.e. $g_A(y) = -e^{-\eta_A y}$, and $g(y) = -e^{-\eta_P y}$, $\eta_A,\eta_P>0$. For simplicity, we omit the discount factor although, as in the previous section, all results extend to the case of deterministic discount rate. The Agent's value function is: 
    \begin{align}
    V^A_{\cara}(\xi,\beta) &:=\sup_{\alpha\in\A}\E^{\alpha,\beta}\bigg[g_A\bigg(\xi - \int_0^Tf_t(\alpha_t,\beta_t)dt\bigg) \bigg],
\end{align}
for all contracts $(\xi,\beta)\in \Xi_{\cara}\times\B$, where
$$
\Xi_{\cara} := \big\{\xi\in\F_T : \E^{\P_0}\big[e^{-\eta_A' \xi}+e^{\eta_P' \xi}\big]<\infty, \text{ for some }\eta_A'>\eta_A,\eta_P'>\eta_P\big\}.
$$
To incorporate the Agent's participation constraint, we introduce the set of admissible contracts:
$$
\C_{\cara}:= \big\{(\xi,\beta)\in\Xi_{\cara}\times\B:V^A_{\cara}(\xi,\beta)\geq g_A(R)\big\},
$$
Note that $\C_{\cara}$ is always nonempty. We use $\hat{\Ac}^{\cara}(\xi,\beta)$ to denote the set of Agent's optimal responses under contract $(\xi,\beta)$, which is non-empty. The Principal's value function is then given by:
\begin{equation}\label{eq:cara_principal_value}
V^P_{\cara} = \sup_{\substack{(\xi,\beta)\in\C_{\cara}}} \sup_{\alpha^{\star}\in\hat{\Ac}^{\exp}(\xi,\beta)}\E^{\alpha^{\star},\beta} \big[g(\ell - \xi)\big].
\end{equation}
Since $\ell$ and $f$ are bounded, by Hölder's inequality we know both the expectation in the value function of the Agent and the Principal are finite. For every $(y,Z)\in\R\times \H_{\loc}$, let $Y$ satisfy the forward dynamics:\begin{align}\label{eq:SDE_of_Y_exponential}
    dY_t = Z_t\cdot dX_t+ \tfrac{\eta_A}{2}|Z_t|^2dt - H_t(Z_t, \beta_t)dt,\qquad Y_0 = y,
\end{align}
where $H$ is defined by \eqref{eq:hamiltonian}. Similar to \citet*{Euch_market_regulation}, for every $\beta\in\B$, we define \begin{align*}
\Zc^{\beta}:&= \bigg\{Z\in\H_{\loc}:\E^{\P_0}\bigg[\sup_{t\in[0,T]}e^{-\eta_A' Y_t^{0,Z,\beta}}+\sup_{t\in[0,T]}e^{\eta_P' Y_t^{0,Z,\beta}}\bigg]<\infty, \text{ for some } \eta_A'>\eta_A,\eta_P'>\eta_P\bigg\}.
\end{align*}

\begin{remark}
    In \citet*{Euch_market_regulation}, the authors study a Principal--Agent problem with exponential utilities and a jump-diffusion controlled state process. They consider the larger admissible set
    \begin{align*}
\bar{\Zc}^{\beta}:&= \bigg\{Z\in\H_{\loc}:\E^{\P_0}\bigg[\sup_{t\in[0,T]}e^{-\eta_A' Y_t^{0,Z,\beta}}+e^{\eta_P' Y_T^{0,Z,\beta}}\bigg]<\infty,\text{ for some } \eta_A'>\eta_A,\eta_P'>\eta_P\bigg\}.
\end{align*}
  In contrast, we work with a more restrictive class $\Zc$. This restriction is imposed in order to justify the supermartingale argument in Theorem~\ref{prop:super_Mg_cara}. We will see in Remark \ref{rmk:Z_beta_equal_bar_Z_beta} that $\Zc$ and $\bar{\Zc}$ are actually identical under our setting.
\end{remark}

Use $\hat{\Ac}^{\exp}(Z,\beta)$ to denote $\hat{\Ac}^{\exp}\big(Y_T^{y,Z,\beta},\beta\big)$, similar to Equation \eqref{eq:principal_value}, it is standard to reduce the problem into a standard stochastic control problem:
\begin{proposition}\label{prop:reduce_to_standard_control_cara} In the above setting,
$\displaystyle
V^P_{\cara}= 
\sup_{\beta\in\B}\;
\sup_{Z\in \Zc^{\beta}
}\;
\sup_{\alpha^{\star}\in\hat{\Ac}^{\exp}(Z,\beta)}\E^{\alpha^{\star},\beta}\bigg[g\Big(\ell - Y_T^{R,Z,\beta}\Big)\bigg].
$
\end{proposition}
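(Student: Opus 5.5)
The proof follows the standard Sannikov / Cvitani\'c--Possama\"i--Touzi reduction, adapted to exponential utility as in \citet*{Euch_market_regulation}. There are two inequalities to show, and the key object in both is the agent's certainty-equivalent continuation value.

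\emph{Inequality ``$\geq$''.} Fix $\beta\in\B$, $Z\in\Zc^{\beta}$ and set $\xi:=Y_T^{R,Z,\beta}$.
\begin{enumerate}
\item The integrability in $\Zc^{\beta}$ (with $\eta_A'>\eta_A$, $\eta_P'>\eta_P$) gives $\xi\in\Xi_{\cara}$, after the shift $y=R$, which only multiplies the exponentials by constants.
\item For any $\alpha\in\A$, define
$$
U^\alpha_t:=g_A\Big(Y_t-\int_0^t f_s(\alpha_s,\beta_s)ds\Big).
$$
By It\^o's formula under $\P^{\alpha,\beta}$, the $\tfrac{\eta_A}{2}|Z|^2$ term in \eqref{eq:SDE_of_Y_exponential} exactly cancels the quadratic variation term. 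This leaves
$$
dU^\alpha_t=-\eta_A U^\alpha_t\Big(Z_t\cdot dW^{\alpha,\beta}_t+\big(h_t(Z_t,\alpha_t,\beta_t)-H_t(Z_t,\beta_t)\big)dt\Big).
$$
Since $-\eta_A U^\alpha>0$ and $h-H\le 0$, the drift is nonpositive, with equality exactly when $\alpha\in\hat{\Ac}^{\exp}(Z,\beta)$.
\item The local martingale part is a true martingale, or at least $U^\alpha$ is a true supermartingale or martingale. This uses $\E^{\P_0}[\sup_t e^{-\eta_A'Y_t}]<\infty$, the boundedness of $f$, and the Remark~\ref{rem:Lq_equal_Lp} type H\"older argument with $\eta_A'>\eta_A$ to transfer uniform integrability from $\P_0$ to $\P^{\alpha,\beta}$. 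This is exactly why the supremum is placed inside the exponential in the definition of $\Zc^\beta$.
\item It follows that $V^A_{\cara}(\xi,\beta)=g_A(R)$, so the contract is admissible, and that $\hat{\Ac}^{\exp}(\xi,\beta)=\hat{\Ac}^{\exp}(Z,\beta)$.
\end{enumerate}
Hence the right-hand side is at most $V^P_{\cara}$.

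\emph{Inequality ``$\leq$''.} Take $(\xi,\beta)\in\C_{\cara}$.
\begin{enumerate}
\item Represent the agent's problem through a quadratic BSDE. Consider the dynamic value
$$
\esssup_\alpha \E^{\alpha,\beta}\Big[g_A\Big(\xi-\int_t^T f\Big)\Big|\F_t\Big],
$$
and write it as $g_A(Y_t)$. Equivalently, solve the BSDE with terminal value $Y_T=\xi$ and generator $H_t(Z,\beta)-\tfrac{\eta_A}{2}|Z|^2$, written under $\P_0$ with $dX$.
\item Existence and uniqueness of $(Y,Z)$ with $Z\in\H_{\loc}$ come from quadratic BSDE theory with unbounded terminal condition having exponential moments (\citet*{BRIAND20132921}). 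The hypothesis $\E[e^{-\eta_A'\xi}]<\infty$ is what is needed on the relevant side. Alternatively, one can run the martingale representation directly on the exponential-utility value process (the dynamic programming route), which avoids needing both tails.
\item This produces $Y_0=V^A$-certainty equivalent $=:y_0\ge R$, with $\xi=Y_T^{y_0,Z,\beta}$.
\item Check $Z\in\Zc^\beta$. The bound $\sup_t e^{-\eta_A''Y_t}\in\L^1$ follows from Doob's inequality applied to the conditional expectations $\E[e^{-\eta_A'\xi}|\F_t]$ together with bounded $f$ and $\mu$ (Jensen). The bound $\sup_t e^{\eta_P''Y_t}\in\L^1$ is handled similarly using $\E[e^{\eta_P'\xi}]<\infty$ and the comparison $Y_t\le$ an expression of the form $\log$-$\E[e^{c\xi}|\F_t]$ plus a constant. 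In both cases we pass to slightly smaller exponents $\eta''$ strictly between $\eta$ and $\eta'$.
\item The argument of the first part identifies optimal responses as the maximizers of $h$.
\item Since $g$ is increasing, $y_0\ge R$ gives
$$
g(\ell-Y_T^{y_0,Z,\beta})\le g(\ell-Y_T^{R,Z,\beta}),
$$
because $Y_T^{y,Z,\beta}=y+Y_T^{0,Z,\beta}$; the dynamics have no $Y$-dependence, as there is no discounting. Also $\Zc^\beta$ is invariant under the shift in $y$.
\end{enumerate}

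\emph{Main obstacle.} The delicate point is the two-sided exponential integrability in the ``$\leq$'' direction, i.e.\ showing $Z\in\Zc^\beta$ with running suprema. In particular, the upper tail $e^{\eta_P'Y_t}$ is not directly controlled by the agent's utility. My first attempt would be to bound $Y_t$ from above by the conditional certainty equivalent under a suitable $\P^{\alpha,\beta}$ and apply Doob's maximal inequality, with the loss of exponent absorbed by the strict inequalities $\eta'>\eta$.
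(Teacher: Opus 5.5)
Your two-inequality structure matches the paper's. The ``$\geq$'' direction is the verification argument; the paper only calls it a simple verification, and your It\^o computation for $U^\alpha$, the cancellation of $\tfrac{\eta_A}{2}|Z|^2$, and the H\"older transfer using the supremum inside $\Zc^\beta$ are what is meant. The ``$\leq$'' direction is the representation $\xi=Y_T^{y_0,Z,\beta}$ with $Z\in\Zc^\beta$, followed by $y_0\ge R$, monotonicity of $g$, and the shift $Y^{y,Z,\beta}=y+Y^{0,Z,\beta}$.

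You diverge from the paper only in how the representation is obtained, and your ``alternative'' is in fact the paper's proof. The paper never invokes quadratic BSDE theory here. It solves the equation for $\tilde Y=e^{-\eta_A Y}$:
\begin{itemize}
\item The equation is $d\tilde Y_t=\tilde Z_t\cdot dX_t+\tilde H_t(\tilde Y_t,\tilde Z_t,\beta_t)dt$, with $\tilde Y_T=e^{-\eta_A\xi}$ and $\tilde H_t(y,z,b)=\sup_{a\in A}\{-\mu_t(a,b)\cdot z-\eta_A y f_t(a,b)\}$.
\item Since $\mu$ and $f$ are bounded, $\tilde H$ is Lipschitz in $(y,z)$, so standard $\L^p$ theory applies using only $\E^{\P_0}[e^{-\eta_A'\xi}]<\infty$.
\item A measurable maximizer $\tilde\alpha$ gives $\tilde Y_t=\E^{\tilde\alpha,\beta}[e^{-\eta_A\xi+\eta_A\int_t^T f_s(\tilde\alpha_s,\beta_s)ds}|\F_t]>0$.
\item One then sets $Y=-\tfrac{1}{\eta_A}\log\tilde Y$ and $Z=-\tilde Z/(\eta_A\tilde Y)$, localizing at $\{\tilde Y\le 1/n\}$ to get $Z\in\H_{\loc}$.
\end{itemize}
The lower-tail condition of $\Zc^\beta$ is then just $\tilde Y\in\S$. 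The upper-tail condition comes from Jensen for the convex map $x\mapsto x^{-\eta_P'/\eta_A}$ applied to this formula, which gives $e^{\eta_P'Y_t}\le e^{\eta_P'T\|f\|_\infty}\E^{\tilde\alpha,\beta}[e^{\eta_P'\xi}|\F_t]$, followed by Doob. This is exactly the certainty-equivalent comparison you anticipate in your last paragraph.

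Your primary route, as written, relies on theorems whose hypotheses are not verified here. Standard existence results for quadratic BSDEs with unbounded terminal value (Briand--Hu type) require $\E[e^{\gamma|\xi|}]<\infty$ with $\gamma$ at least the quadratic coefficient $\eta_A$. But $\Xi_{\cara}$ controls the upper tail only at rate $\eta_P'$, which may lie below $\eta_A$. You would therefore need to locate or prove a one-sided existence result. This is plausible, since the generator $H-\tfrac{\eta_A}{2}|z|^2$ is bounded above, but it is not supplied.

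Likewise, uniqueness theorems in that setting rely on convexity of the generator in $z$, and $H-\tfrac{\eta_A}{2}|z|^2$ is in general neither convex nor concave. Uniqueness is not needed for this proposition anyway.

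The exponential transform removes both issues, and it also produces the conditional-expectation formula that drives the integrability check. You should commit to that route.
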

 \begin{proof}
     See Subsection~\ref{subsec:cara_xi_represent_by_Z_and_y} for completeness.
 \end{proof}

\subsection{Explicit Characterization of the Principal's Value Function}

Now we consider a quadratic BSDE
\begin{align}\label{eq:bsde_quadratic_Gamma}
    \begin{aligned}
        d   P_t =    Q_t \cdot dX_t    -F_t(   Q_t)dt, \qquad
           P_T = \ell, \qquad \P_0\text{-a.s.}
    \end{aligned}
\end{align}
    where \begin{align}\label{eq:def_F}F_t(   q) &:= \max_{\substack{z\in\R^d \\  b\in B} }\ \max_{p\in\partial_z H_t(z,b)} \tilde{F}_t(q,z,p,b),
    \\
    \tilde{F}_t(   q,z,p,b)&
    :=H_t(z,b)+(q\!-\!z)\!\cdot\!p  -\tfrac{\eta_A}{2}|z|^2 - \tfrac{\eta_P}{2}|   q - z|^2.
    \end{align}
    Unlike in the previous section, here the process $   P$ incorporates information from both the Agent's and the Principal's utility functions. Since $H$ is Lipschitz in $z$, we can verify that $\|F_{\cdot}(0)\|_{\infty}<\infty$, and
    \begin{align*}
        |F_{t}(q) - F_{t}(q')|\leq L_F(1+|q|+|q'|)|q-q'|,
    \end{align*}
    for a constant $L_F$ depending only on $\eta_A,\eta_P,\|f\|_{\infty},\|\mu\|_{\infty}$. Details are provided in Appendix~\ref{app:quadratic_bsde}. Hence, by \citet*[Proposition 2.1]{BRIAND20132921} and \citet*{kobylanski_quadra_bsde}, it has a unique solution $(   P,    Q)$ such that $   P\in\S^{\infty}$, and $\big\{\int_0^t   Q_s\cdot dW^{\alpha^{\star},\beta}_s\big\}_{t\in[0,T]}$ is a $\P^{\alpha^{\star},\beta}$-BMO martingale for every $(Z,\beta)\in\H\times \B$ and $\alpha^{\star}\in \hat{\Ac}^{\cara}(Z,\beta)$, i.e. $\|   Q\|_{BMO_2}^{\P^{\alpha^{\star},\beta}}<\infty$, where
$$
\|   Q\|_{BMO_2}^{\P}:=\sup_{\tau\in\T}\bigg\|\E^{\P}\bigg[\int_{\tau}^T |   Q_s|^2ds\bigg|\F_{\tau}\bigg]^{\frac{1}{2}}\bigg\|_{\infty}.
$$ 
\begin{remark}
     Formally, the risk-neutral Agent setting corresponds to $\eta_A = 0$, it is easy to check that $F_t(q) = \bar{H}_t(q)$, and the BSDE~\eqref{eq:bsde_quadratic_Gamma} coincides with the BSDE~\eqref{eq:BSDE_of_Gamma} of the previous section.
\end{remark}

Our main result states, similar to the previous section that the quadratic BSDE~\eqref{eq:bsde_quadratic_Gamma} yields an explicit representation of the Principal’s value function \eqref{eq:cara_principal_value}, together with a characterization of an optimal contract.
\begin{theorem}\label{prop:super_Mg_cara}
 Under the above setting of exponential utilities Principal--Agent problem, we have $
    V^P_{\cara} = -e^{-\eta_P(   P_0 -  R)},$
    with an optimal contract $(\xi^\star,\beta^\star)$, where $
    \xi^{\star} = Y_T^{R,Z^{\star},\beta^{\star}}$, and
    $(Z^{\star},\beta^{\star})$ satisfies \begin{align*}
(Z^{\star}_t,\beta^{\star}_t)\in\argmax_{z\in\R^d, \   b\in B }\ \max_{p\in\partial_z H_t(z,b)} \tilde{F}_t(   Q_t,z,p,b),\qquad \rd\P_0\otimes \rd t-a.e.
\end{align*}   
\end{theorem}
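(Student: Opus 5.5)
Starting from Proposition~\ref{prop:reduce_to_standard_control_cara}, I would mimic the proof of Lemma~\ref{lem:bound_VP}. Since $g(x)=-e^{-\eta_P x}$ is concave, no concavification or concavity lemma is needed. For $\beta\in\B$, $Z\in\Zc^\beta$ and $\alpha^\star\in\hat{\Ac}^{\exp}(Z,\beta)$, set
\[
\Gamma_t:=-e^{-\eta_P(P_t-Y^{R,Z,\beta}_t)},\qquad t\in[0,T],
\]
so that $\Gamma_T=g(\ell-Y_T^{R,Z,\beta})$ and $\Gamma_0=-e^{-\eta_P(P_0-R)}$. The first step is Itô's formula. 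Under $\P^{\alpha^\star,\beta}$ we have $dX_t=\mu_t(\alpha^\star_t,\beta_t)dt+dW_t$, and we write $p_t:=\mu_t(\alpha^\star_t,\beta_t)\in\partial_zH_t(Z_t,\beta_t)$. Then
\[
d(P_t-Y_t)=(Q_t-Z_t)\cdot dW_t+\Big[(Q_t-Z_t)\cdot p_t-F_t(Q_t)+H_t(Z_t,\beta_t)-\tfrac{\eta_A}{2}|Z_t|^2\Big]dt,
\]
and therefore
\[
d\Gamma_t=-\eta_P\Gamma_t\Big[(Q_t-Z_t)\cdot dW_t+\big(\tilde F_t(Q_t,Z_t,p_t,\beta_t)-F_t(Q_t)\big)dt\Big].
\]
Since $-\eta_P\Gamma_t>0$ and $\tilde F\le F$ by \eqref{eq:def_F}, the drift is nonpositive, so $\Gamma$ is a local supermartingale. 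The drift vanishes exactly when $(Z,\beta)=(Z^\star,\beta^\star)$ and $\alpha^\star$ is chosen so that $p_t$ attains the inner max. The maxima in \eqref{eq:def_F} are attained: because $H(\cdot,b)$ is Lipschitz, $-\frac{\eta_A}{2}|z|^2$ makes $\tilde F$ coercive in $z$; $\partial_zH$ is compact-valued and upper semicontinuous; and $B$ is compact. A measurable selection then produces $(Z^\star,\beta^\star)$ together with an $\alpha^\star$ realizing $p^\star$. For this choice $\Gamma$ is a stochastic exponential.

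The second step upgrades the local supermartingale to a true supermartingale, which gives $\E^{\alpha^\star,\beta}[\Gamma_T]\le\Gamma_0$. Since $\Gamma\le0$, a local supermartingale that is bounded below by an integrable process is a supermartingale by Fatou. Here $|\Gamma_t|\le e^{\eta_P\|P\|_\infty}e^{\eta_P Y_t^{R,Z,\beta}}$, and $Y^{R,Z,\beta}=R+Y^{0,Z,\beta}$. The definition of $\Zc^\beta$ gives $\E^{\P_0}[\sup_t e^{\eta_P'Y_t}]<\infty$ with $\eta_P'>\eta_P$, and Hölder's inequality with the $L^q$-bounded densities (Remark~\ref{rem:Lq_equal_Lp}) transfers this to $\E^{\alpha^\star,\beta}[\sup_t|\Gamma_t|]<\infty$. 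This is exactly why the stronger class $\Zc^\beta$ is used. Taking suprema yields $V^P_{\cara}\le-e^{-\eta_P(P_0-R)}$.

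The third step, the reverse inequality and optimality, is the main obstacle. For $(Z^\star,\beta^\star)$ we get $\Gamma=\Gamma_0\,\Ec\big(-\eta_P\int(Q-Z^\star)\cdot dW\big)$, and we need two facts: that $Z^\star\in\Zc^{\beta^\star}$, and that this exponential is a true martingale. I would obtain both by showing $Z^\star$ is bounded by $C(1+|Q_t|)$. Coercivity of $\tilde F$ in $z$ together with the Lipschitz bound on $H$ should give this linear growth. Since $\int Q\cdot dW$ is BMO under every $\P^{\alpha,\beta}$, so is $\int Z^\star\cdot dW$. Kazamaki's criterion then makes the stochastic exponential a uniformly integrable martingale, so $\E[\Gamma_T]=\Gamma_0$.

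For admissibility I would write $Y^{0,Z^\star,\beta^\star}=P-P_0-(P-Y^{0})$ with $P$ bounded. Then $e^{\eta_P' Y}$ and $e^{-\eta_A' Y}$ reduce to powers of stochastic exponentials of BMO martingales times bounded factors. The reverse Hölder inequality for BMO exponentials (Kazamaki) gives the $\sup_t$ integrability for exponents slightly larger than $\eta_P,\eta_A$; the argument may first need a localization, or a check that the $L^r$ range of the reverse Hölder inequality is large enough. Finally, $\xi^\star=Y_T^{R,Z^\star,\beta^\star}$ satisfies the participation constraint with equality by the Agent's representation underlying Proposition~\ref{prop:reduce_to_standard_control_cara}. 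This yields $V^P_{\cara}=-e^{-\eta_P(P_0-R)}$, attained by $(\xi^\star,\beta^\star)$.
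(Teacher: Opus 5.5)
Your proposal is correct and follows the paper's proof essentially step by step. The upper bound comes from Itô's formula for $-e^{-\eta_P(P-Y)}$ together with the integrability built into $\Zc^\beta$. Admissibility of $Z^\star$ and the martingale property at the optimum come from the linear growth $|Z^\star_t|\le C(1+|Q_t|)$, BMO, and Kazamaki's results. Your Fatou argument and your rewriting of the exponentials as powers $\Ec(\cdot)^{\eta'/\eta}$, controlled by reverse H\"older and Doob, are only cosmetic variants of the paper's Doob--Meyer step and its H\"older-plus-energy-inequality step.

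One caution on the $e^{-\eta_A'Y}$ bound. Do not use the $P-(P-Y)$ decomposition there, because it produces a negative power of $\Ec\big(-\eta_P\int_0^\cdot(Q_s-Z^\star_s)\cdot dW_s\big)$, and reverse H\"older does not control negative powers. Use instead the forward dynamics $dY^0_t=Z^\star_t\cdot dW_t+\big(f_t(\alpha^\star_t,\beta^\star_t)+\tfrac{\eta_A}{2}|Z^\star_t|^2\big)dt$. This gives $e^{-\eta_A'\int_0^t f_s(\alpha^\star_s,\beta^\star_s)ds}\,\Ec_t\big(-\eta_A\int_0^\cdot Z^\star_s\cdot dW_s\big)^{\eta_A'/\eta_A}$, a positive power with a bounded factor.

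Your worry about localization or the reverse-H\"older range is unfounded. $\Zc^\beta$ only asks for some $\eta'>\eta$, so you can take $\eta'/\eta$ below the reverse-H\"older exponent and then apply Doob's inequality.
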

\begin{proof}
    \textit{Step 1.} For every $(y,\beta)\in\R\times\B$, $Z\in\Zc^{\beta}$, define $M^{y,Z,\beta}_t:= -e^{-\eta_P(P_t - Y_t^{y,Z,\beta})}$, $t\in[0,T]$. 
For every $\alpha^{\star}\in\hat{\Ac}^{\cara}(Z,\beta)$, let $\mu^{\star}_t:= \mu_t(\alpha^{\star}_t,\beta_t)\in \partial_z H_t(Z_t,\beta_t)$.  By Itô's formula, \begin{align*}
    dM_t^{y,Z,\beta} &= M_t^{y,Z,\beta}\Big[ - \eta_P d\big(   P_t - Y_t^{y,Z,\beta}\big) + \tfrac{1}{2}\eta_P^2d\big\langle   P - Y^{y,Z,\beta},   P - Y^{y,Z,\beta} \big\rangle_t\Big]\\
    &=\eta_P M_t^{y,Z,\beta}\Big[\big(F_t(   Q_t) - \tilde{F}_t(   Q_t,Z_t,\mu^{\star}_t,\beta_t)\big)dt +( Z_t -    Q_t)\!\cdot\!dW_t^{\alpha^{\star},\beta}\Big].
\end{align*}
Therefore $M^{y,Z,\beta}$ is a $\P^{\alpha^{\star},\beta}$-local supermartingale. Since $Z\in \Zc^{\beta}$ and $   P$ is bounded, we obtain $M^{y,Z,\beta}\in \S$. In particular $\{M^{y,Z,\beta}_{\tau}:\tau\in\T\}$ is $\P^{\alpha^{\star},\beta}$-uniformly integrable. Apply Doob-Meyer decomposition we know $M^{y,Z,\beta}$ is a $\P^{\alpha^{\star},\beta}$-supermartingale. Therefore $\E^{\alpha^{\star},\beta} \big[M_T^{y,Z,\beta}\big]\leq -e^{-\eta_P(P_0 -  y)}.
$
Taking the supremum over $y\geq R$, we obtain $V^P_{\exp}\leq-e^{-\eta_P(   P_0 -  R)}.$

 \medskip

 \textit{Step 2.} Let $F^{\circ}_t(   q,z,b):= \sup_{p\in\partial_z H_t(z,b)}\tilde{F}_t(   q,z,p,b)$. Since $A$ is compact and $(a,z,b)\mapsto h_t(\x,z,a,b)$ is continuous by assumption, by Berge's maximum theorem $(z,b)\mapsto H_t(z,b)$ is continuous, and therefore $(z,p,b)\mapsto \tilde{F}_t(   q,z,p,b)$ is continuous. Notice that $(z,b)\mapsto \partial_zH_t(z,b)$ is upper hemi-continuous, apply Berge's maximum theorem again and we obtain $(z,b)\mapsto F^{\circ}_t(   q,z,b)$ is upper semi-continuous. Notice that $A$, $B$ are compact and $\sup_{b\in B}F^{\circ}_t(   q,z,b) \rightarrow -\infty$ as $|z|\rightarrow+\infty$, so for every $(t,\x,   q)\in[0,T]\times\Omega\times\R^d$, there exists $(\hat{z},\hat{b})$, such that $(\hat{z},\hat{b})\in\argmax_{z\in\R^d, \   b\in B } F^{\circ}_t(   q,z,b)$. Let $A^{\star}_t(z,b):=\argmax_{a\in A} h_t(z,a,b)$, by a standard measurable selection argument, there exists predictable processes $(Z^{\star},\beta^{\star}, \alpha^{\star})$ that satisfies \begin{align*}
(Z^{\star}_t,\beta^{\star}_t)\in\argmax_{z\in\R^d, \   b\in B } F^{\circ}_t(   Q_t,z,b), \qquad \alpha^{\star}_t\in\argmax _{a\in A^{\star}_t(Z^{\star}_t,\beta^{\star}_t)}(Q_t-Z^{\star}_t)\cdot\mu_t(a,\beta^{\star}_t), \qquad \rd\P_0\otimes \rd t-a.e.\end{align*} 
Since $\tilde{F}$ is affine in $p$, we directly obtain $\alpha^{\star}\in\hat{\mathcal{A}}^{\exp}(Z^{\star},\beta^{\star})$ and $F_t(Q_t) = \tilde{F}_t(   Q_t,Z^{\star}_t,\mu^{\star}_t,\beta^{\star}_t)$. We verify that $Z^{\star}\in\Zc^{\beta^{\star}}$. For simplicity, we use $\hat{\P}$ to denote $\P^{\alpha^{\star},\beta^{\star}}$. By definition of $Z^\star$
$$
(   Q_t-Z_t^{\star})\cdot \mu_t(\alpha_t^\star,\beta_t^{\star})+H_t(Z_t^{\star},\beta^\star_t)-\tfrac{\eta_A}{2}|Z_t^{\star}|^2\geq H_t(   Q_t,\beta^{\star}_t)-\tfrac{\eta_A}{2}|   Q_t|^2.
$$
Therefore $
   |Z^{\star}_t|^2\leq C_F(1+|   Q_t|^2)
   $, for some constant $C_F>0$, and thus $\big\{\int_0^tZ^{\star}_s\cdot dW^{\hat{\P}}_s\big\}_{t\in[0,T]}$ is a $\hat{\P}$-BMO martingale. By \citet*[Theorem 2.2]{kazamaki1994continuous}, we have:
   \begin{align*}
       \E^{\hat{\P}}\bigg[\exp \bigg(\varepsilon^2\int_0^t|Z^{\star}_s|^2ds \bigg)\bigg]\leq \frac{1}{1-\varepsilon \|Z^{\star}\|_{BMO_2}^{\hat{\P}}}
       ~~\text{for all}~~
       \varepsilon>0
       ~~\text{with}~~
       \varepsilon\|Z^{\star}\|_{BMO_2}^{\hat{\P}}<1.
   \end{align*}
For every $\eta_A'>\eta_A$, 
   \begin{align*}
     \sup_{t\in[0,T]}  e^{-\eta_A^{\prime} Y_t^{0, Z^{\star},\beta^{\star}}}&=\sup_{t\in[0,T]}\Ec_t\Big(-\eta_A' \int_0^{\cdot} Z^{\star}_s
       \!\cdot\!d W^{\hat{\P}}\Big) \exp \bigg(\int_0^t\big(\tfrac{\eta_A'(\eta_A'-\eta_A)}{2}|Z^{\star}_s|^2-\eta_A' f_s(\alpha^{\star}_s,\beta^{\star}_s)\big) d s\bigg)\\
       &\leq C_{\eta_A'} \sup_{t\in[0,T]}\Ec_t\bigg(-\eta_A' \int_0^{\cdot} Z^{\star}_s
       \!\cdot\!d W^{\hat{\P}}\bigg) \exp \bigg(C_{\eta_A,\eta_A'} \int_0^T|Z^{\star}_s|^2 ds\bigg),
   \end{align*}
   where $C_{\eta_A'}  = e^{\eta_A'T\|f\|_{\infty}}$, $C_{\eta_A,\eta_A'} = \tfrac{\eta_A'(\eta_A'-\eta_A)}{2}$.
 By \citet*[Theorem 3.1]{kazamaki1994continuous}, we can choose $p >1$ sufficiently close to $1$ such that for every $\eta_A'\in(\eta_A,2\eta_A)$, $\Ec_T\big(-\eta_A'\int_0^{\cdot}Z^{\star}_s\!\cdot\!dW^{\hat{\P}}_s\big)\in\L^{p}(\hat{\P})$. Let $q$ satisfy $\tfrac{1}{p}+\tfrac{1}{q} =1$, by Hölder inequality, we have 
   \begin{align*}
      \E^{\hat{\P}} \bigg[\!\sup_{t\in[0,T]}e^{-\eta_A^{\prime} Y_t^{0, Z^{\star},\beta^{\star}}}\bigg]&\!\leq\! C_{\eta_A'}\E^{\hat{\P}}\bigg[\!\sup_{t\in[0,T]}\mathcal{E}_t^p\bigg(\!-\eta_A'\! \int_0^{\cdot} \!Z^{\star}_s
      \!\cdot\!d W^{\hat{\P}}_s\bigg) \bigg]^{\frac{1}{p}} \E^{\hat{\P}}\bigg[\!\exp \bigg(qC_{\eta_A,\eta_A'}\!\int_0^T\!|Z^{\star}_s|^2\! d s\bigg)\bigg]^{\frac{1}{q}}.
   \end{align*}
By Doob's inequality, we know $\E^{\hat{\P}}\big[\sup_{t\in[0,T]}\mathcal{E}_t^p\big(-\eta_A' \int_0^{\cdot} Z^{\star}_s
      \!\cdot\!d W^{\hat{\P}}_s\big) \big]<\infty$. Now choose $\eta_A'$ such that $\tfrac{q\eta_A'(\eta_A' - \eta_A)}{2}<\varepsilon^2$, we deduce that $  \E^{\hat{\P}} \big[\sup_{t\in[0,T]}e^{-\eta_A^{\prime} Y_t^{0, Z^{\star},\beta^{\star}}}\big]<\infty$.  On the other hand, as $
   d   P_t =    Q_t\!\cdot\! dX_t   -\tilde{F}_t(   Q_t,Z^{\star}_t,{\mu}^{\star}_t,\beta^{\star}_t)dt
   $, we obtain 
   $$
   Y_T^{0,Z^{\star},\beta^{\star}} = \ell-   P_0 +\int_0^T(Z^{\star}_t-   Q_t)\!\cdot\!dW^{\hat{\P}}_t - \frac{\eta_P}{2}\int_0^T|Z^{\star}_t  -    Q_t|^2dt.
   $$
   Therefore $e^{\eta_P^{\prime} Y_T^{0, Z^{\star},\beta^{\star}}}=e^{\eta_P'(\ell -    P_0)}\Ec_T\big(\eta_P' \int_0^{\cdot} (Z^{\star}_s-   Q_s)\!\cdot\!d W^{\hat{\P}}\big)  e^{\tfrac{\eta_P'(\eta_P'-\eta_P)}{2}\int_0^T|Z^{\star}_s -    Q_s|^2d s}.
$ where, similar to the previous proof, $\big\{\int_0^t(Z^{\star}_s-   Q_s)\cdot dW^{\hat{\P}}_s\big\}_{t\in[0,T]}$ is also a $\hat{\P}$-BMO martingale. Then it follows from Hölder's inequality that
$\E^{\hat{\P}} \big[e^{\eta_P^{\prime} Y_T^{0, Z^{\star},\beta^{\star}}}\big]<\infty$, for some $\eta_P'>\eta_P$. Combine with Remark~\ref{rmk:Z_beta_equal_bar_Z_beta}, we conclude that $Z^{\star}\in\Zc^{\beta^{\star}}$.

    \medskip

 \textit{Step 3.} Notice that $
    dM_t^{y,Z^{\star},\beta^{\star}}  =\eta_P M_t^{y,Z^{\star},\beta^{\star}}( Z^{\star}_t -    Q_t)\cdot dW_t^{\alpha^{\star},\beta^{\star}}$, so $M^{y,Z^{\star},\beta^{\star}}$ is a $\P^{\alpha^{\star},\beta^{\star}}$-uniformly integrable martingale, which implies $$\E^{\alpha^{\star},\beta^{\star}}\big[g(\ell - Y_T^{R,Z^{\star},\beta^{\star}})\big]=-e^{-\eta_P(   P_0 -  R)}. $$
Combine with previous steps we complete the  proof.
\end{proof}

\subsection{Proof of Proposition \ref{prop:reduce_to_standard_control_cara}}\label{subsec:cara_xi_represent_by_Z_and_y}

Obviously, for every $(y,\beta)\in\R\times\B$ and $Z\in\Zc^{\beta}$, we have $Y_T^{y,Z,\beta}\in \Xi_{\exp}$, and  $V^A_{\cara}\big(Y^{y,Z,\beta}_T,\beta\big) = g_A(y),$ by a simple verification argument. In order to complete the proof of Proposition~\ref{prop:reduce_to_standard_control_cara}, we now prove: for  every $(\xi,\beta)\in \Xi_{\exp}\times \B$, there exists a unique $(y,Z)\in\R\times\Zc^{\beta}$, s.t.  $\xi = Y_T^{y,Z,\beta}$.\\

 Fix $(\xi,\beta)\in\Xi_{\exp}\times \B$, without loss of generality we let $r = 0$. Consider the BSDE
\begin{align}\label{eq:backward_SDE_of_Y_exponential}
    dY_t = Z_t\cdot dX_t + \tfrac{\eta_A}{2}
    |Z_t|^2dt - H_t(Z_t, \beta_t)dt,\qquad Y_T = \xi,
\end{align}
 If the above BSDE has a solution, we know $Y_0\in\R$ because the filtration $\F_0$ is trivial. So it suffices to prove the above BSDE has a unique solution $(Y,Z)$ such that $Z\in\Zc^{\beta}$.
 Consider the BSDE :\begin{align}\label{eq:BSDE_of_tilde_Y}
     d\tilde{Y}_t = \tilde{Z}_t\cdot dX_t+\tilde{H}_t(\tilde{Y}_t,\tilde{Z}_t,\beta_t)dt,\qquad \tilde{Y}_T = \tilde{\xi},
 \end{align}
 where $\tilde{H}_t(y,z,b) := \sup_{a\in A} \tilde{h}_t(y,z,a,b)$, $\tilde{h}_t(y,z,a,b):=-\mu_t(a,b)\cdot z - \eta_A y f_t(a,b)$, and $\tilde{\xi} = e^{-\eta_A \xi}$. $\xi\in\Xi_{\cara}$ implies that $\tilde{\xi}\in\L$. Since $\mu,f$ are bounded, $\tilde{H}$ is Lipschitz in $(y,z)$. Therefore, by $L^p$-BSDE theory \cite[Theorem 4.2]{BRIAND2003109}, we know the Equation \eqref{eq:BSDE_of_tilde_Y} has a unique solution $(\tilde{Y}, \tilde{Z})\in \S\times \H$. Since $A$ is compact and $a\mapsto \tilde{h}_t(y,z,a,b)$ is continuous, $\argmax_{a\in A}\tilde{h}_t(y,z,a,b)\neq \emptyset$ for every $(y,z,b)\in\R\times\R^d\times B$. By a standard measurable selection argument, we are able to select an optimizer $\tilde{\alpha}_t\in \argmax_{a\in A}\tilde{h}_t(\tilde{Y}_t,\tilde{Z}_t,a,\beta_t)$, which is progressively measurable.
By Itô's formula, we derive $$
 d\big(\tilde{Y}_t e^{\eta_A \int_0^t f_s(\tilde{\alpha}_s,\beta_s)ds}\big) = e^{\eta_A \int_0^t f_s(\tilde{\alpha}_s,\beta_s)ds}\tilde{Z}_t\cdot dW_t^{\tilde{\alpha},\beta},
 $$
 Since $\tilde{Y}\in\S$, $f$ is bounded, we know $\big\{\tilde{Y}_t e^{\eta_A \int_0^t f_s(\tilde{\alpha}_s,\beta_s)ds}\big\}_{t\in[0,T]}$ is a $\P^{\tilde{\alpha},\beta}$-martingale, therefore 
 \begin{align}\label{eq:explict_Mg_tilde_Y}
 \tilde{Y}_t= \E^{\tilde{\alpha},\beta}\Big[e^{-\eta_A\xi+\eta_A \int_t^T f_s(\tilde{\alpha}_s,\beta_s)ds}\Big|\F_t\Big].
  \end{align}
 Notice that $\tilde{\alpha}$ actually depends on $\tilde{Y}$ and $\tilde{Z}$, this provides an implicit formula of $\tilde{Y}$. Additionally we know $\tilde{Y}_t>0$ almost surely. Now we are able to define $Y_t: = -\tfrac{1}{\eta_A}\log \tilde{Y}_t$ and $Z_t: = -\tfrac{\tilde{Z}_t}{\eta_A \tilde{Y}_t}$.
 The uniqueness of $Z$ is inherited from that of the solution to \eqref{eq:BSDE_of_tilde_Y}. Now we verify $Z\in \Zc^{\beta}$:
 \begin{enumerate}
     \item Since $\tilde{Y}>0$, by taking $ \tau_n:=\inf \big\{t \geq 0: \tilde{Y}_t \leq \tfrac{1} { n}\big\}$, we obtain that $Z \in \H_{\loc}$.
     \item $\tilde{Y}\in\S$ is equivalent to $\E^{\P_0}\Big[\sup_{t\in [0,T]}e^{-\eta_A' Y_t}\Big]<\infty $ for some $\eta_A'>\eta_A$. 
     \item By Lemma \ref{rem:Lq_equal_Lp}, there exists $\eta_P'>\eta_P$ such that $\E^{\tilde{\alpha},\beta}\big[e^{\eta_P' \xi}\big]<\infty$. Define $\varphi(x) := x^{-\frac{\eta_P'}{\eta_A}}$, which is a convex function on $(0,+\infty)$. Applying Jensen's inequality to Equation \eqref{eq:explict_Mg_tilde_Y},  we derive:\begin{align}\label{eq:eq_for_remark4.3}
         e^{\eta_P'Y_t } = \varphi(\tilde{Y}_t )\leq  \E^{\tilde{\alpha},\beta}\Big[\varphi\Big(e^{-\eta_A\xi+\eta_A \int_t^T f_s(\tilde{\alpha}_s,\beta_s)ds}\Big)\Big|\F_t\Big]\leq \E^{\tilde{\alpha},\beta}\big[e^{\eta_P' \xi}|\F_t\big] e^{\eta_P' T \|f\|_{\infty}}.
     \end{align}
     By Doob’s inequality, we conclude that $\E^{\tilde{\alpha},\beta}\Big[\sup_{t\in [0,T]}e^{\eta_P'' Y_t}\Big]<\infty $ for some $\eta_P''>\eta_P$. \qed
 \end{enumerate}

\begin{remark}\label{rmk:Z_beta_equal_bar_Z_beta}
    A direct corollary of Equation~\eqref{eq:eq_for_remark4.3} is that $\Zc^{\beta}= \bar{\Zc}^{\beta}$.
\end{remark}

\section{Principal delegating to a Regime-Switching Agent}\label{sec:PA_RS}

In this section, we introduce a Principal–Agent problem in which the Agent solves a regime-switching problem, and we derive an explicit solution using the arguments developed in the previous section. The Sannikov's method raises serious difficulties in the present context. The Agent's value function can still be represented by a  reflected system of BSDEs, but unlike the classical Principal-Agent problem of Section~\ref{sec:PA_problem}, reversing the dynamics to the forward direction does not reduce the problem to a standard control problem.

\subsection{Principal-Agent model with regime switching output}
Given $m\in\N$, $m\geq 2$, let $\M := \set{0,1,\cdots ,m-1}$ denote the set of all the regimes, between which the agent may switch. We use $\Es$ to denote the set of all the marked point processes $E:[0,T]\times \Omega\longrightarrow \M $ start at regime $0$, i.e.: 
    \begin{align*}
\Es
:= \Big\{E \in \mathbb{D}([0,T];\M): 
& ~~E_t=\sum _{n\geq 0}I_n \mathds{1}_{[\tau_n,\tau_{n+1})} (t), \quad 0\leq t<T,\quad  E_T = E_{T-} ,\\
& ~~\text{for some non-decreasing}~(\tau_n)_{n\ge 1}\subset\T,
~\tau_0=0,~\tau_n\uparrow T,
\\ & ~~ \text{and some}~
   (I_n)_{n\ge 0}\subset\L^0(\F_{\tau_n},\M),
   ~I_{n-1}\neq I_{n}\text{ on }\{\tau_n<T\} \Big\}. 
\end{align*}
We use the convention  $E_{0-}=I_{-1}=0$ for every $E\in\Es$, and we allow an initial switch at $0$. Here $E_t$ denotes the regime of the Agent at time $t$. Let $\M_0 = \set{1,\cdots, m-1}$. For every $E\in\Es$, we define the associated integer-valued optional random measure $\rho^E$ on $[0,T]\times\M_0$ and counting process $N$: 
$$
\rho^E(\omega;dt,di)
:=
\sum_{\Delta E_t(\omega)\neq 0}
\delta_{\left(t,\Delta E_t(\omega)\right)}(dt,di),
~~\text{and}~~
N^i_t := \rho^E([0,t],\{i\}),
$$
with $\Delta E_t=E_t-E_{t-}~(\text{mod } m)$ as standard. Then, 
$$
dE_t = \int_{\M_0} i \rho^E(dt, di),\quad(\text{mod } m)
~~
\text{or equivalently,}~~
dE_t = \sum_{i\in\M_0} i dN^i_t,\quad (\text{mod } m),
$$
as $\M_0$ is discrete. Notice that $N^i_0$ counts the possible initial jump at $0$, so it is not necessarily 0. Let $N_t:= \sum_{i\in\M_0}N^i_t$ be the total number of jumps.\\

We extend the coefficients $\mu$, $f$ of Section \ref{sec:PA_problem} allowing an additional dependence on $i\in\M$:
$$(\mu,f): [0,T]\times\Omega\times\M\times A\times B\longrightarrow \R^d\times \R, $$
and we assume that, for every $i\in\M$, $\mu^i$, $f^i$ satisfy the same assumptions as in Section~\ref{sec:PA_problem}, i.e. $\mu^i\in\mathcal{D}^d \cap \L^{\infty}$; $f^i\in \mathcal{D}^1$ and satisfies the integrability assumption~\eqref{eq:f_integ_assump}.
Similarly, for every $(E,\alpha,\beta)\in\Es\times\A\times\B$, there exists a unique probability measure $\P^{E,\alpha,\beta}$, s.t.\begin{equation*}
    dX_t = \mu^{E_t}_t(X,\alpha_t,\beta_t)dt+ dW^{E,\alpha,\beta}_t,
\end{equation*}
for some $\P^{E,\alpha,\beta}$-Brownian motion $W^{E,\alpha,\beta}$.
We introduce a switching cost rate process $$c:[0,T]\times\Omega\times \M^2\longrightarrow\R_+,$$ which represents the instant switching cost from one regime to the other. We assume for every $i,j
\in\M$, $c^{i,j}$ is predictable and continuous in $t$, and 
\begin{align*}
c^{i,i}_t = 0,~~
c^{i,j}_t+c^{j,k}_t > c^{i,k}_t,
& ~~\text{ for all}~i,j,k\in \M,\ i\neq j,\ j\neq k;
\\
\inf_{\substack{i,j\in\M \\ i\neq j}}
\inf_{t\in[0,T]} c_t^{i,j} \ge c_b,
&~~\text{ for some constant } c_b>0.
\end{align*}

Recall the discount factor is defined by $\Rc_t = e^{-\int_0^t r(s)ds}$, with a bounded deterministic function $r$. For every $
(\xi,\beta)\in\L\times\B$, we denote the objective function of the Agent by: \begin{align}
    J^A_{RS}(\xi,\beta;E,\alpha) &:=\E^{E,\alpha,\beta}\bigg[\Rc_T\xi-\int_0^T\Rc_tf_t^{E_t}(\alpha_t,\beta_t)dt 
- \sum_{i=1}^{m-1}\int_{[0,T]} \Rc_tc^{E_{t-},E_t}_tdN^{i}_{t} \bigg],
\end{align}
Therefore the agent's value function is given by
\begin{equation}
    V^A_{RS}(\xi,\beta) :=  \sup_{E\in\Es,\alpha\in\A}J^A_{RS}(\xi,\beta;E,\alpha).
\end{equation}
\begin{remark}
Some works on regime switching problem use left-continuous regime processes with right limits, which can simplify verification argument, see e.g. \citet*{hamadene_on_the_starting_and_stopping_problem}. We instead consider the càdlàg regime processes $E$, which is more natural for our marked point process formulation. The two formulations are characterized by the same reflected BSDE and therefore provide the same value function.
\end{remark}
We denote the set of all admissible contracts  by: \begin{equation}\label{eq:admissable_contract_regime_switching}
\C_{RS}:=\set{ (\xi,\beta)\in \L\times\B: V^A_{RS}(\xi,\beta)\geq R}.
\end{equation}
In the following section, we prove the existence of an optimizer $\big({E}^{\star}(\xi,\beta),{\alpha}^{\star}(\xi,\beta)\big)\in\Es\times\A$, characterized by \eqref{eq:optimal_E_alpha}, and we will omit the index $(\xi,\beta)$ for simplicity. Then we define the Principal’s problem:
\begin{align}\label{eq:principal_value_reflect}
V^P_{RS} := \sup_{ (\xi,\beta)\in\C_{RS}} \E^{{E}^{\star},{\alpha}^{\star},\beta}\big[g(\ell-\xi)\big].
\end{align}
\begin{remark}
Unlike the standard Principal-Agent setup, we only consider one particular optimal response of the Agent, rather than the whole set of his optimal responses, mainly due to the intractability of all the optimizers in the present context. 
\end{remark}

\subsection{Solution to the Agent’s Problem}

Define the Hamiltonian $H:[0,T]\times \M\times\Omega\times B\times \R^d\longrightarrow \R$ by:
$$
H^i_t(z,b): =\sup_{a\in A} h_t^i(z,a,b), 
 \qquad h^i_t(z,a,b): = z\cdot \mu_t^i(a,b) - f^i_t(a,b).
$$

 Given $(\xi,\beta)\in \L\times\B$, consider the following system of reflected BSDE: for $i\in\M$,
\begin{align}
\label{eq:skorokhod_condition_K}
\left\{
\begin{aligned}
  &   Y^i_t =\xi+ K^i_T-K^i_t-\int_t^Tr(s)Y^i_sds+\int_t^TH^i_s(Z^i_s,\beta_s)ds-\int_t^TZ^i _sdX_s, \\
   & Y^i_t \geq \max_{j\neq i}\set{Y^j_t - c^{i,j}_t},
     ~~\text{and}~~
    (Y^i_t -\max_{j\neq i}\set{Y^j_t - c^{i,j}_t})dK^i_t = 0,   
\end{aligned}
\right.
\end{align}

which has Lipschitz coefficients. Under our assumption on $c^{i,j}$, the
$L^p$-well-posedness theory for systems of reflected BSDEs (see \citet*{Shi2024}) ensures this reflected BSDE has a unique strong solution $(Y,Z,K)\in\S_c\times\H\times \I_c$.

\begin{proposition}\label{prop:solve_agent_problem_regime} For every $(\xi,\beta)\in\L\times\B$, we have $V^{A}_{RS}(\xi,\beta) = Y^{0}_0 ,$ with an optimizer given by \begin{align}\label{eq:optimal_E_alpha}
         E^{\star}_t := \sum_{i\geq 0} \hat{\cI}_i \mathds{1}_{[\hat{\tau}_{i},\hat{\tau}_{i+1})}(t),~~~~
          \alpha^{\star}_t := \hat{\alpha}^{E^{\star}_t}_t\big(Z^{E^{\star}_t}_t,\beta_t\big),
          ~~\text{where:}
     \end{align}
\begin{itemize}
    \item $\hat{\alpha}^{i}_t(z,b) \in \arg\max_{a\in A} h_t^i(z,a,b)$;
    \item $\hat{\cI}_0 \in \arg \max _{j \in \mathcal{M}}\Big\{Y_0^j-c_0^{0, j}\Big\}$, and $\hat{\cI}_0:=0 $ whenever $ 0\in \arg \max _{j \in \mathcal{M}}\Big\{Y_0^j-c_0^{0, j}\Big\}$;
    \item for $i\geq 1$, $\hat{\tau}_{i} := \inf\big\{t>\hat{\tau}_{i-1}: Y_t^{\hat{\cI}_{i-1}} = \max_{j\neq \hat{\cI}_{i-1}}\{Y_t^{j}-c_t^{\hat{\cI}_{i-1},j} \}\big\}$,
    
    \hspace{15mm}
    $\hat{\cI}_i \in  \big\{j\in\M\setminus \{\hat{\cI}_{i-1}\}:Y_{\hat{\tau}_{i}}^{\hat{\cI}_{i-1}}- Y_{\hat{\tau}_{i}}^{j}+c_{\hat{\tau}_{i}}^{\hat{\cI}_{i-1},j} = 0\big\}.
         $
\end{itemize} 
\end{proposition}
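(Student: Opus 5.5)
The proof is a standard verification argument for optimal switching, in the spirit of \citet*{hamadene_on_the_starting_and_stopping_problem} and \citet*{Hamadne2010SwitchingPA}, adapted to the discounted, controlled-drift setting.

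For an arbitrary admissible pair $(E,\alpha)\in\Es\times\A$ we introduce the regime-indexed process
$$\Gamma_t:=\Rc_tY^{E_t}_t-\int_0^t\Rc_sf^{E_s}_s(\alpha_s,\beta_s)ds-\int_{[0,t]}\Rc_s c^{E_{s-},E_s}_s dN_s,$$
with the convention that $\Gamma_{0-}=Y^0_0$. We then aim to show that $\Gamma$ is a $\P^{E,\alpha,\beta}$-supermartingale, and a true martingale when $(E,\alpha)=(E^\star,\alpha^\star)$. Since $\Gamma_T$ equals the Agent's reward inside $J^A_{RS}$ (because $Y^i_T=\xi$ for all $i$), this yields $J^A_{RS}(\xi,\beta;E,\alpha)\le Y^0_0$, with equality at the proposed optimizer.

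\textbf{Step 1: continuous part.} On each interval $[\tau_n,\tau_{n+1})$ the regime is frozen at $I_n$. Applying It\^o's formula to $\Rc_tY^{I_n}_t$, using the reflected BSDE \eqref{eq:skorokhod_condition_K}, and rewriting $dX_t=\mu^{I_n}_t dt+dW^{E,\alpha,\beta}_t$, we get
$$d\Gamma_t=\Rc_t\big[Z^{I_n}_t dW_t-dK^{I_n}_t+\big(h^{I_n}_t(Z^{I_n}_t,\alpha_t,\beta_t)-H^{I_n}_t(Z^{I_n}_t,\beta_t)\big)dt\big].$$
Here the $r$-terms cancel. The drift is nonpositive, and $-dK\le0$.

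\textbf{Step 2: jump part.} At a switching time $\tau_n$, the jump is
$$\Delta\Gamma=\Rc_{\tau_n}\big(Y^{I_n}-Y^{I_{n-1}}-c^{I_{n-1},I_n}\big)\le0,$$
by the obstacle constraint in \eqref{eq:skorokhod_condition_K}. Taking expectations then requires justifying the martingale property of $\int\Rc Z^{E}dW$ together with integrability of the accumulated costs. Integrability follows from $Y,Z,K$ lying in $\S_c\times\H\times\I_c$, combined with Remark~\ref{rem:Lq_equal_Lp} to pass from $\P_0$ to $\P^{E,\alpha,\beta}$.

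A technical point is that the number of switches may be infinite with positive probability. Since $c\ge c_b>0$, any strategy with $\E[N_T]=\infty$ yields reward $-\infty$ and can be discarded. For the remaining strategies we localize by $\tau_n\wedge$ a sequence of stopping times and pass to the limit using the uniform integrability of $\sup_t|Y_t|$.

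\textbf{Step 3: optimality, the main obstacle.} For $(E^\star,\alpha^\star)$ the inequalities in Steps~1 and~2 all become equalities:
\begin{itemize}
\item $\alpha^\star$ attains $H$;
\item on $(\hat\tau_i,\hat\tau_{i+1})$ the process $Y^{\hat\cI_i}$ stays strictly above its obstacle, so $dK^{\hat\cI_i}=0$ by the Skorokhod condition;
\item the jumps are zero by the choice of $\hat\cI_i$.
\end{itemize}
The difficulty is proving that $E^\star\in\Es$, i.e.\ that $\hat\tau_i\uparrow T$ with only finitely many switches almost surely, and that $\hat\tau_{i+1}>\hat\tau_i$.

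For the latter, at $\hat\tau_i$ we have $Y^{\hat\cI_i}=Y^{\hat\cI_{i-1}}-c^{\hat\cI_{i-1},\hat\cI_i}$. If $Y^{\hat\cI_i}$ were also on its own obstacle, attained at some regime $k$, then
$$Y^{\hat\cI_{i-1}}\ge Y^k-c^{\hat\cI_{i-1},k}>Y^k-c^{\hat\cI_{i-1},\hat\cI_i}-c^{\hat\cI_i,k}=Y^{\hat\cI_{i-1}},$$
by the strict triangle inequality. This is a contradiction; the case $k=\hat\cI_{i-1}$ is handled directly by $c>0$. By continuity of $Y$ and $c$, the next switch therefore occurs strictly later.

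Finiteness of the number of switches follows by contradiction. If infinitely many switches accumulated at some $\tau_\infty\le T$, then along a cyclic subsequence the same pair of regimes would recur. Summing the equalities $Y^{I_{n-1}}-Y^{I_n}=c^{I_{n-1},I_n}$ over a cycle and letting $n\to\infty$, continuity of $Y$ would give $0\ge$ a sum of positive costs, hence $0\ge c_b>0$, which is impossible.

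The same cycle argument shows that the initial switch is well defined.
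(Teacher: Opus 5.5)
Your proposal is correct and follows essentially the same verification argument as the paper. The paper chains the inequalities inductively over the switching times $\tau_n$ rather than introducing a single process $\Gamma$, and it dismisses the admissibility of $E^\star$ as ``easy to check''; your strict-triangle argument (switching times strictly increase) and cycle argument (finitely many switches) supply exactly that missing step.

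One sign slip to fix: at $\hat\tau_i$ the defining relation is $Y^{\hat\cI_i}=Y^{\hat\cI_{i-1}}+c^{\hat\cI_{i-1},\hat\cI_i}$, not $Y^{\hat\cI_{i-1}}-c^{\hat\cI_{i-1},\hat\cI_i}$. The plus sign is what makes the last equality in your displayed chain hold, and the same correction applies to $Y^{I_n}-Y^{I_{n-1}}=c^{I_{n-1},I_n}$ in the cycle argument.
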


\begin{proof}
 The result follows the standard verification argument in stochastic control theory and is an easy extension of \citet*{hamadene_on_the_starting_and_stopping_problem} to the present multiple regime case. We provide the details for completeness. For every $ (E,\alpha,\beta)\in\Es\times\A\times\B$, let $\tau_0 = 0$, $\tau_i := \inf{\set{t>\tau_{i-1}: \Delta E_t\neq 0 }}$ be the i-th jump time, and $\cI_i = E_{\tau_i}$ be the corresponding regime. \begin{align*}
        Y^0_0
        \geq Y_0^{E_0} - c_0^{0,E_0}
        &\geq \Rc_{\tau_1}Y^{E_0}_{\tau_1  }-c_0^{0,E_0}-\int_0^{\tau_1  }\Rc_tf_t^{E_0}(\alpha_t,\beta_t)dt - \int^{\tau_1 }_0\Rc_tZ_t^{E_0}\big(dX_t-\mu^{E_0}_t(\alpha_t,\beta_t)dt\big)\\
        &\geq \Rc_{\tau_1}\xi\ch{\tau_1 = T}-c_0^{0,\cI_0}+\Rc_{\tau_1}(Y^{\cI_1}_{\tau_1 }-c^{\cI_0,\cI_1}_{\tau_1})\ch{\tau_1<T}-\int_0^{\tau_1 }\Rc_{t}f_t^{E_0}(\alpha_t,\beta_t)dt  \\
        &\qquad-\int^{\tau_1 }_0\Rc_{t}Z_t^{E_0}\big(dX_t-\mu^{E_0}_t(\alpha_t,\beta_t)dt\big).
        \end{align*}
        Therefore, by induction, we can derive, for every $n$:\begin{align*}
        Y^0_0
        &\geq \Rc_{\tau_n}\xi\ch{\tau_n = T}+\Rc_{\tau_n}Y^{E_{\tau_n}}_{\tau_n }\ch{\tau_n<T}-\sum_{j=0}^n\Rc_{\tau_j}c_{\tau_j}^{\cI_{j-1},\cI_j}\ch{\tau_j<T}-\int_0^{\tau_n }\Rc_{t}f_t^{E_t}(\alpha_t,\beta_t)dt \\&\qquad - \int^{\tau_n }_0\Rc_{t}Z_t^{E_{t}}\big(dX_t-\mu^{E_t}_t(\alpha_t,\beta_t)dt\big).
        \end{align*}
    Since $c^{i,j}$ are uniformly bounded from below, we assume without loss of generality that $\inf\{n:\tau_n = T\}<\infty$, a.s. because otherwise $J^A_{RS}(\xi,\beta;E,\alpha)  = -\infty$. Letting $n\rightarrow\infty$, we have 
    \begin{align*}
        Y^0_0&\geq \Rc_{T} \xi - \sum_{j=0}^\infty \Rc_{\tau_j}c_{\tau_j}^{\cI_{j-1},\cI_j}\ch{\tau_j<T} -\int_0^{ T}\Rc_{t}f_t^{E_t}(\alpha_t,\beta_t)dt - \int^{ T}_0\Rc_{t}Z_t^{E_{t}}dW^{E,\alpha,\beta}_t.
        \end{align*}
    Notice that $\big|Z_s^{E_s}\big|\leq \sum_{i = 0}^{m-1}|Z^i_s|$, and $Z^i\in \H$, so $\big(Z^{E_s}_s\big)_{s\in[0,T]}\in\H$, i.e. $\big(Z^{E_s}_s\big)_{s\in[0,T]}\in\H^p$ for some $p>1$. Then by BDG inequality, $$\E^{E,\alpha,\beta}\bigg[\Big|\sup_{t\in[0,T]}\int^{ t}_0Z_s^{E_{s}}\cdot dW^{E,\alpha,\beta}_s\Big|^p\bigg]\leq C_p\E^{E,\alpha,\beta}\bigg[\bigg(\int^{ T}_0\big|Z_s^{E_{s}}\big|^2ds\bigg)^{\frac{p}{2}}\bigg]<\infty,
    $$
  for a constant $C_p$. Therefore, $\int^{ t}_0\Rc_sZ_s^{E_{s}}dW^{E,\alpha,\beta}_s$ is a $\P^{E,\alpha,\beta}$-martingale, and we have
    \begin{align*}
        Y^0_0&\geq \E^{\P^{E,\alpha,\beta}}\bigg[\Rc_{T}\xi - \int_0^{T}\Rc_{t}f_t^{E_t}(\alpha_t,\beta_t)dt -\sum_{j=0}^\infty \Rc_{\tau_j}c_{\tau_j}^{\cI_{j-1},\cI_j}\ch{\tau_j<T}\bigg].
    \end{align*}
Taking the supremum over all $(E,\alpha)\in\mathscr{E}\times\A$ yields
\begin{equation}\label{eq:y0_leq_VARS}
Y^{0}_{0}\;\ge\;V^{A}_{RS}(\xi,\beta).
\end{equation}
Since $A$ is compact and $a\mapsto h^i_t(z,a,b)$ is continuous, $\arg\max_{a\in A} h_t^i(\x,z,a,b)\neq \emptyset$. Therefore we can select $\hat{\alpha}$ measurable such that $\hat{\alpha}^{i}_t(z,b) \in \arg\max_{a\in A} h_t^i(z,a,b)$. It is easy to check that the equality in Equation~\eqref{eq:y0_leq_VARS} is attained at $(E^{\star},\alpha^{\star})$. 
\end{proof}

\subsection{Failure of Sannikov's reduction}
In this section we consider a special case where $m = 2$, i.e. $\M = \set{0,1}$, and we set the discount rate $r\equiv 0$ to shorten the expressions. In this case, the reflected BSDE~\eqref{eq:skorokhod_condition_K} reduces to: 
for $i\in\M$,
\begin{align}
\label{eq:rbsde_2regimes}
\left\{
\begin{aligned}
  &   Y^i_t =\xi+ K^i_T-K^i_t+\int_t^TH^i_s(Z^i_s,\beta_s)ds-\int_t^TZ^i _sdX_s, \\
   & Y^i_t \geq Y^{1-i}_t - c^{i,1-i}_t,
     ~~\text{and}~~
     (Y^i_t -Y^{1-i}_t +c^{i,1-i}_t)dK^i_t = 0.
\end{aligned}
\right.
\end{align}
Two obstacles prevent a direct reduction of the Principal's value function to a standard control problem. First, we have the state constraint that $\xi$ does not depend on the regime, i.e. $Y^0_T = Y^1_T$; second, the reflection introduces the additional increasing terms $K^i$, so we can not simply reparametrize $\xi$ by $(Y_0^i,Z^i)$ as in the previous sections. However, since the discount rate $r$ is uncontrolled, we can solve this by considering a doubly reflected BSDE with $0$ terminal value. More precisely, given every $(Z^0,\beta)\in\H\times\B$, consider: \begin{align}
\label{eq:double_reflected_BSDE}
\left\{
\begin{aligned}
  &  \delta Y_t =  K^1_T-K^1_t- K^0_T+K^0_t + \int_t^T\big[H^1_s(\delta Z_s + Z^0_s,\beta_s) -H^0_s( Z^0_s,\beta_s)\big]ds
  -\int_t^T\delta Z _sdX_s, \\
   & - c^{1,0}_t\leq \delta Y_t \leq c^{0,1}_t ,
    ~~( \delta Y_t + c^{1,0}_t)dK^1_t = 0,
    ~~\text{and}~~
    (-\delta Y_t+c_t^{0, 1})d K_t^0=0.
\end{aligned}
\right.
\end{align}
By \citet*{El_Asri_Lp_double_Reflected_BSDE}, it has a unique solution $(\delta Y, \delta Z, K^0,K^1)\in \S_c\times\H\times \I_c\times\I_c$.  Therefore, for every $(y,Z^0,\beta)\in\R\times\H\times \B$, we can define $(Y^0,Y^1,Z^1)$ by \begin{align*}
    Y^{0,Z^0}_t&:=  y - K^{0,Z^0}_t -\int^t_0  H^0_s(Z^0_s,\beta_s)ds + \int^t_0 Z^0_sdX_s,\\
  Y^{1,Z^0}_t  &:=  Y^0_t +\delta Y^{Z^0}_t,\qquad Z^{1,Z^0}_t  :=  Z^0_t +\delta Z^{Z^0}_t.
\end{align*}
Since $\delta Y_T = 0$, we obtain $Y^1_T = Y^0_T$. By the analysis above, we have the following reduction.
\begin{proposition}
With the notation $(E^{\star},\alpha^{\star}) = (E^{\star},\alpha^{\star})\brac[2]{Y_T^{0,(y,Z^0,\beta)}}$ in Proposition~\ref{prop:solve_agent_problem_regime}, we have 
    $$V^P_{RS} = \sup_{y\geq R}\sup_{\beta\in\B}\sup_{Z^0\in\H} \E^{{E}^{\star},\alpha^{\star},\beta}\bigg[g\big(\ell - Y_T^{0,(y,Z^0,\beta)}\Big)\bigg].$$
\end{proposition}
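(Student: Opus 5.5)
The plan is to prove the two inequalities separately, following the pattern of the reduction in \citet*[Theorem 4.2]{cvitanic2017dynamicprogrammingapproachprincipalagent}: every admissible contract is represented by some $(y,Z^0,\beta)$ with $y\ge R$, and every such triple produces an admissible contract.

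\emph{Step 1 ($\le$).} Fix $(\xi,\beta)\in\C_{RS}$ and let $(Y^0,Y^1,Z^0,Z^1,K^0,K^1)$ be the unique solution of \eqref{eq:rbsde_2regimes} with terminal condition $\xi$. By Proposition~\ref{prop:solve_agent_problem_regime}, $y:=Y^0_0=V^A_{RS}(\xi,\beta)\ge R$. Set $\delta Y:=Y^1-Y^0$ and $\delta Z:=Z^1-Z^0$, and keep $K^0,K^1$. Subtracting the two equations in \eqref{eq:rbsde_2regimes} gives the dynamics in \eqref{eq:double_reflected_BSDE} with $\delta Y_T=0$. The two one-sided constraints become $-c^{1,0}\le\delta Y\le c^{0,1}$, and the two Skorokhod conditions become exactly the two minimality conditions of \eqref{eq:double_reflected_BSDE}. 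Hence $(\delta Y,\delta Z,K^0,K^1)$ solves the doubly reflected BSDE driven by this $Z^0$. By the uniqueness result of \citet*{El_Asri_Lp_double_Reflected_BSDE}, it coincides with $(\delta Y^{Z^0},\delta Z^{Z^0},K^{0,Z^0},K^{1,Z^0})$. Writing the $Y^0$ equation forward then gives $Y^0_t=Y^{0,Z^0}_t$, so $\xi=Y^0_T=Y_T^{0,(y,Z^0,\beta)}$. The Agent's optimal response $(E^\star,\alpha^\star)$ depends only on $(\xi,\beta)$, so the Principal's payoff of $(\xi,\beta)$ appears in the right-hand side. This proves $\le$.

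\emph{Step 2 ($\ge$).} Fix $y\ge R$, $\beta\in\B$ and $Z^0\in\H$, and build $(Y^{0},Y^1,Z^1,K^0,K^1)$ as in the construction preceding the proposition. Set $\xi:=Y^{0}_T=Y^1_T$. I need to check three things:
\begin{itemize}
\item $\xi\in\L$. This follows from $Z^0\in\H$, the $\S$/$\I$ integrability of $K^0$ from the El Asri theory, and the integrability of $H^0(Z^0,\beta)$, which holds because $\mu$ is bounded and $f$ satisfies \eqref{eq:f_integ_assump}. Remark~\ref{rem:Lq_equal_Lp} then gives the $\L^p(\P_0)$ bound.
\item The tuple $(Y^0,Y^1,Z^0,Z^1,K^0,K^1)$ solves \eqref{eq:rbsde_2regimes} with terminal value $\xi$. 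The $Y^0$ equation holds by definition. The $Y^1$ equation is obtained by adding the $\delta Y$ dynamics; with $r=0$ the cross terms cancel. The constraints and Skorokhod conditions translate back as in Step 1.
\item By uniqueness for \eqref{eq:rbsde_2regimes} (\citet*{Shi2024}) and Proposition~\ref{prop:solve_agent_problem_regime}, $V^A_{RS}(\xi,\beta)=y\ge R$. Hence $(\xi,\beta)\in\C_{RS}$, and its payoff under $(E^\star,\alpha^\star)(\xi)$ is the inner expectation.
\end{itemize}
This proves $\ge$.

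\emph{Main obstacle.} I expect the difficulty to be the integrability bookkeeping rather than the algebra. The solution class of the doubly reflected BSDE (El Asri) has to match the class $\S_c\times\H\times\I_c$ in which \citet*{Shi2024} proves uniqueness, so the identifications in both steps are legitimate. One also has to confirm that the generator $z\mapsto H^1(z+Z^0,\beta)-H^0(Z^0,\beta)$ is Lipschitz and has an integrable value at $0$. This needs $H^0(Z^0,\beta)\in\H$-type integrability, which follows from boundedness of $\mu$ and \eqref{eq:f_integ_assump}. I would handle these first, citing the $L^p$ a priori estimates of both theories, before doing the algebraic identification.
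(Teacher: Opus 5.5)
Your proposal is correct and follows the paper's route. The paper justifies this reduction only by ``the analysis above'', namely the correspondence between contracts $\xi$ and triples $(y,Z^0,\beta)$ through the doubly reflected BSDE \eqref{eq:double_reflected_BSDE} with zero terminal value. Your two steps make both directions of that correspondence explicit: first you subtract the two equations of \eqref{eq:rbsde_2regimes} and identify $(\delta Y,\delta Z,K^0,K^1)$ by uniqueness, then you add them back and use Proposition~\ref{prop:solve_agent_problem_regime} to get $V^A_{RS}(\xi,\beta)=y\ge R$, and you identify the needed integrability bookkeeping correctly.
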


This turns out to be a stochastic control problem of coupled forward-backward SDE (FBSDE). There are only few studies on it; see for instance \citet*{doi:10.1137/090763287} and \citet*{hernandez_dylan_emma_stackelberg_game}. However, to the best of our knowledge, a value function characterization for this stochastic control problem of FBSDE is not available in the existing literature.

\subsection{BSDE characterization of the Principal's problem}
Consider the system of reflected BSDE for every $i\in\M$:
\begin{align}\label{eq:rbsde_p_beta}
\begin{cases}
     \bar{P}^{i}_t = \ell+ \bar{L}^i_T- \bar{L}^i_t - \int_t^Tr(s) \bar{   P}^i_sds+\int_t^T \bar{H}^i_s( \bar{Q}^i_s)ds - \int_t^T \bar{   Q}^{i}_sdX_s,\\
      \bar{   P}^i_t \geq \max_{j\neq i}\set{  \bar{   P}^{j}_t-c^{i,j}_t},
    ~~\text{and}~~
    \big(  \bar{   P}^i_t -\max_{j\neq i}\set{  \bar{   P}^{j}_t-c^{i,j}_t}\big)d \bar{L}^i_t = 0,
\end{cases}
\end{align}

where $\bar{H}:[0,T]\times\M\times \Omega\times\R^d \longrightarrow \R$ is defined as: $$\bar{H}^i_t(\x,z):=\sup_{(a,b)\in A\times B}\big(\mu^i_t(\x,a,b)\cdot z - f^i_t(\x,a,b)\big) = \sup_{b\in B}H^i_t(\x,z,b).$$ Since $\ell\in \cap_{p>1}\L^p$, by \citet*{Shi2024}, this system of reflected BSDE has a unique solution $(\bar{   P},\bar{   Q},\bar{L})$, s.t. $\bar{   P}^i\in\cap_{p\in(1,p_f)}\S^p_c $, $\bar{   Q}^i\in\cap_{p\in(1,p_f)}\H^p$, $\bar{L}^i\in\cap_{p\in(1,p_f)}\I^p_c$, for some $p_f\in(1,2]$.

\begin{theorem}\label{thm:solve_PA_RS}
    Let $g$ be non-decreasing and continuous with $g^{\conc}<\infty$, then the value function \eqref{eq:principal_value_reflect} is given by $V^P_{RS}  =  g^{\conc}\Big((\bar{   P}^0_0 - R)e^{\int_0^Tr(s)ds}\Big).$ Moreover, if $(\bar{P}^0_0-R)e^{\int_0^Tr(s)ds}\in \big\{g=g^{\conc}\big\}$, then an optimal contract $(\xi^{\star},\beta^{\star})$ is given by:
    $$\xi^{\star} =\ell+ (R - \bar{   P}^0_0)e^{\int_0^Tr(s)ds},
    ~~\beta^{\star}_t := \hat{b}^{\bar{E}^{\star}_t}_t\big(\bar{   Q}^{\bar{E}^{\star}_t}_t\big),
    ~~\text{where:}
    $$
    \begin{itemize}
        \item $\hat{b}^i_t(z) \in \arg\max_{b\in B} H_t^i(z,b)$,
        \item $\bar{E}^{\star}_t := \sum_{i\geq 0} \bar{\cI}_i \mathds{1}_{[\bar{\tau}_{i},\bar{\tau}_{i+1})}(t)$, with $\bar{\tau}_{i} = \inf\big\{t>\bar{\tau}_{i-1}: {   \bar{P}_t^{\bar{\cI}_{i-1}} = \max_{j\neq \bar{\cI}_{i-1}}\{   \bar{P}_t^{j}-c_t^{\bar{\cI}_{i-1},j} }\}\big\}$,

        \hspace{48mm}
        and $\bar{\cI}_i = \min \big\{j\in\M\setminus \{\bar{\cI}_{i-1}\}:   \bar{P}_{\bar{\tau}_{i}}^{\bar{\cI}_{i-1}}-    \bar{P}_{\bar{\tau}_{i}}^{j}+c_{\bar{\tau}_{i}}^{\bar{\cI}_{i-1},j} = 0\big\}$.
    \end{itemize}    
\end{theorem}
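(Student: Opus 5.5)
The plan follows the three-step scheme of Theorem~\ref{thm:solve_PA}: an a priori two-sided bound (the analogue of Lemma~\ref{lem:bound_VP}), a concavity result in the initial agent value (the analogue of Lemma~\ref{lem:v_concave_in_y}), and attainment on the contact set. The main difference is that the Sannikov parametrization is replaced by the reflected BSDE system \eqref{eq:skorokhod_condition_K}.

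\textbf{Upper bound.} Fix an admissible contract $(\xi,\beta)\in\C_{RS}$ and let $(Y,Z,K)$ solve \eqref{eq:skorokhod_condition_K}. Let $(E^\star,\alpha^\star)$ be the optimal response given by Proposition~\ref{prop:solve_agent_problem_regime}, and write $\P^\star:=\P^{E^\star,\alpha^\star,\beta}$. Consider the process
\[
M_t:=\big(\bar P^{E^\star_t}_t-Y^{E^\star_t}_t\big)e^{\int_t^Tr(s)ds}.
\]
\emph{Between switching times.} On $[\hat\tau_i,\hat\tau_{i+1})$ the agent's component $Y^{E^\star}$ does not reflect, so $dK^{E^\star}=0$. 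The drift of $M$ then equals $H^{i}(Z^i,\beta)-\bar H^i(\bar Q^i)+(\bar Q^i-Z^i)\cdot\mu^i(\alpha^\star,\beta)$, minus $d\bar L^i$. This is nonpositive, by convexity of $H^i$ in $z$ (as in Step~1 of Lemma~\ref{lem:bound_VP}) and because $\bar H^i\ge H^i(\cdot,b)$.
\emph{At a switch time $\hat\tau_{i}$ from $j$ to $k$.} We have $Y^k=Y^j+c^{j,k}$, while $\bar P^k\le \bar P^j+c^{j,k}$. Hence $\bar P^k-Y^k\le \bar P^j-Y^j$, so the jumps of $M$ are nonpositive.
\emph{Conclusion.} $M$ is a $\P^\star$-supermartingale; integrability follows from the $L^p$ estimates and the same argument as in Proposition~\ref{prop:solve_agent_problem_regime}. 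Since $M_T=\ell-\xi$ and $M_0\le(\bar P^0_0-Y^0_0)e^{\int_0^Tr}$ (this includes a possible initial switch), Jensen's inequality gives
\[
\E^\star[g(\ell-\xi)]\le g^{\conc}\big((\bar P^0_0-R)e^{\int_0^Tr}\big),
\]
using that $g^{\conc}$ is non-decreasing and $Y_0^0\ge R$.

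\textbf{Lower bound and attainment.} Take $\xi=\ell+(y-\bar P^0_0)e^{\int_0^Tr}$ and $\beta^\star$ as in the statement. Then $(\bar P^i+(y-\bar P^0_0)e^{-\int_t^Tr},\bar Q^i,\bar L^i)$ solves \eqref{eq:skorokhod_condition_K}, because the obstacle and Skorokhod conditions are invariant under adding a common deterministic shift. This uses that $r$ is deterministic and that $\bar H^i(\bar Q^i)=H^i(\bar Q^i,\beta^\star)$ along the regime actually followed. I expect the main subtlety here: $\beta^\star$ maximizes $H^i$ only for $i=\bar E^\star_t$, so the equality holds only in the active regime. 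The fix is to compare directly. By uniqueness of \eqref{eq:skorokhod_condition_K}, the agent's value is $Y^0_0\le$ the shifted $\bar P^0_0$. Conversely, the strategy $\bar E^\star$ combined with the efforts attaining $\bar H$ achieves exactly $y$. Hence $V^A_{RS}=y$, and the agent's selected optimizer follows $\bar E^\star$; the $\min$ tie-breaking rule matches that of Proposition~\ref{prop:solve_agent_problem_regime}. Along this path $M$ is constant, so the principal's payoff is $g((\bar P_0^0-y)e^{\int_0^Tr})$. With $y=R$ this gives optimality on the contact set, and in general it gives the lower bound $V_0(y)\ge g(\cdot)$.

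\textbf{Concavification.} To upgrade $g$ to $g^{\conc}$, I would redo the proof of Lemma~\ref{lem:v_concave_in_y} with a dynamic version $V_t(i,y)$ of the principal's value. On a short initial interval, the principal runs a high-powered incentive $Z^n$ in regime $0$. Since $c\ge c_b>0$, switching is not optimal before $Y$ exits the band $(y_2,y_1)$ for small $y_1-y_2$, and the exit estimates of Lemma~\ref{lm:estimate_tau_n} hold verbatim. Concatenating contracts at the exit time requires a dynamic programming principle for the reflected system. I expect this pasting to be the main obstacle: the continuation contract must be chosen measurably, and the reflected system must be continued consistently across the exit time. Continuity of $g$ is used here for the lower semicontinuity and measurable selection. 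Once concavity of $y\mapsto V_0(y)$ is established, concavity together with the bound $g\le V_0\le g^{\conc}$ gives the result, exactly as in the proof of Theorem~\ref{thm:solve_PA}.
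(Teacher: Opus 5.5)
Your upper bound is correct and is essentially the paper's Step~1. You insert $\bar P$ directly and use $\bar H^i\ge H^i(\cdot,\beta)$, whereas the paper fixes $\beta$, uses the system driven by $H^i(\cdot,\beta)$, and optimizes over $\beta$ at the end. Your attainment argument on the contact set is also fine once you note that $\ell-\xi^\star$ is deterministic. Only $V^A_{RS}(\xi^\star,\beta^\star)\ge R$ is needed, so the claim that the selected optimizer follows $\bar E^\star$ can be dropped. Two small corrections: the deterministic shift is $(y-\bar P^0_0)e^{\int_0^t r(s)ds}$, and $Y^0_0\le y$ follows from a verification argument, not from uniqueness.

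The genuine gap is the concavification step, which is the only route in your plan to $V^P_{RS}=g^{\conc}(\cdot)$ off the contact set. In the switching model the principal cannot simply prescribe $Z^0=Z^n$ and have $Y^0$ follow the forward dynamics used in Lemma~\ref{lem:v_concave_in_y}. $Y^0$ carries the reflection term $K^0$, and the other components $Y^j$ on $[0,\tau^n]$ are determined backward by whatever continuation contracts are pasted at $\tau^n$. Your claim that the agent does not switch before $Y^0$ leaves $(y_2,y_1)$ is therefore unjustified: the band constrains $Y^0$ only, while the switching rule involves $Y^j_t-c^{0,j}_t$. If $Y^0$ reaches its obstacle, $K^0$ may increase and the agent switches, and the construction of Lemma~\ref{lem:v_concave_in_y} breaks down. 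More fundamentally, the principal's continuation problem is not indexed by a scalar $(i,y)$. Since all regimes share one terminal transfer, it depends on the whole vector $(Y^0_t,\dots,Y^{m-1}_t)$. This is precisely the failure of Sannikov's reduction discussed in Section~\ref{sec:PA_RS}, which leads to a coupled forward--backward control problem for which no value-function characterization is available. So the pasting you flag is not a technicality, and as written the bound $V^P_{RS}\ge g^{\conc}(\cdot)$ is not established.

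The missing idea is that no dynamic programming is needed: a static lottery concavifies directly. Write $\bar x:=(\bar P^0_0-R)e^{\int_0^Tr(s)ds}=\lambda x_1+(1-\lambda)x_2$ with $\lambda g(x_1)+(1-\lambda)g(x_2)\ge g^{\conc}(\bar x)-\eta$. Set $A_\varepsilon:=\{X^1_\varepsilon-X^1_0\le\sqrt{\varepsilon}\,\Phi^{-1}(\lambda)\}$ and $Q_\varepsilon:=x_1\mathds{1}_{A_\varepsilon}+x_2\mathds{1}_{A_\varepsilon^c}$. Offer $\xi^\varepsilon:=\ell-Q_\varepsilon+\delta_\varepsilon$ with $\delta_\varepsilon:=\sup_{E,\alpha}|\E^{E,\alpha,\beta}[Q_\varepsilon]-\bar x|$.

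Because $\mu$ is bounded, $\P^{E,\alpha,\beta}(A_\varepsilon)\to\lambda$ uniformly in $(E,\alpha,\beta)$, so $\delta_\varepsilon\to0$ and the agent cannot manipulate the lottery. Indeed, $J^A_{RS}(\xi^\varepsilon,\beta;E,\alpha)\ge J^A_{RS}(\ell,\beta;E,\alpha)-\Rc_T\bar x$ for every $(E,\alpha)$. Take $\beta=\beta^\star$, for which your verification argument gives $V^A_{RS}(\ell,\beta^\star)=\bar P^0_0$; then participation holds. Whatever optimal response is selected (inducing $\P^\varepsilon$), the principal receives $g(x_1-\delta_\varepsilon)\P^\varepsilon(A_\varepsilon)+g(x_2-\delta_\varepsilon)\P^\varepsilon(A_\varepsilon^c)\to\lambda g(x_1)+(1-\lambda)g(x_2)$. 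This limit is where continuity of $g$ is used. The paper runs this argument for each fixed $\beta$ and then identifies $\sup_\beta P^{\beta,0}_0=\bar P^0_0$ by a switching verification argument.
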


\begin{proof}
For all $\beta\in\B$ the system of reflected BSDEs:
\begin{align}\label{eq:rbsde_p}
\begin{cases}
        P^{i}_t = \ell+L^i_T-L^i_t - \int_t^Tr(s)   P^i_sds+\int_t^TH^i_s(   Q^i_s,\beta_s)ds - \int_t^T   Q^{i}_sdX_s,\\
        P^i_t \geq \max_{j\neq i}\set{    P^{j}_t-c^{i,j}_t},
    ~~\text{and}~~
    \big(P^i_t -\max_{j\neq i}\set{    P^{j}_t-c^{i,j}_t}\big)dL^i_t = 0,
    ~~i\in\M,
\end{cases}
\end{align}
has a unique solution $(P,Q,L)$, s.t. $   P^i\in\cap_{p\in(1,p'_f)}\S^p_c $, $   Q^i\in\cap_{p\in(1,p'_f)}\H^p$, $L^i\in\cap_{p\in(1,p'_f)}\I^p$, for some $p_f'\in(1,2]$. Let
\begin{align*}
V^P_{\beta} := \sup_{ \xi:(\xi,\beta)\in\C_{RS}} \E^{{E}^{\star},{\alpha}^{\star},\beta}\big[g(\ell-\xi)\big].
\end{align*}

\textit{Step 1.} We first prove $V^P_{\beta}\leq g^{\conc}\Big(\big(   P^{\beta,0}_0-R\big)e^{\int_0^Tr(s)ds}\Big)$. For every $\xi\in\L$, let $(Y^{\xi},Z^{\xi},K^{\xi})$ denote the solution to the reflected BSDE \eqref{eq:skorokhod_condition_K}. For simplicity, we omit the index $(\xi,\beta)$ and simply write ${E}^{\star} $ and ${\alpha}^{\star} $ when it does not cause ambiguity. By definition of $E^{\star}$, we have $Y^{E^{\star}_t }_t - Y_t^{E^{\star}_{t-}} = c_t^{E^{\star}_{t-}, E^{\star}_t }$ on $\{\Delta N_t^{\star}\neq 0\}$, and $Y^{E^{\star}_t }_t > \max_{i\neq E^{\star}_t}\big\{Y^i_t - c_t^{E^{\star}_{t},i }\big\}$ on $\{\Delta N_t^{\star}= 0\}$. It follows from the Skorokhod condition that $dK_t^{E_{t-}^{\star}} = 0$. Therefore,
 \begin{align*}
    d\big(Y_t^{\xi;E^{\star}_t}e^{\int_t^Tr(s)ds}\big) &=e^{\int_t^Tr(s)ds} \Big(f_t^{E^{\star}_t} (\alpha^{\star}_t,\beta)dt+ Z_t^{E^{\star}_t} dW_t^{E^{\star},\alpha^{\star} , \beta} +c_t^{E^{\star}_{t-}, E^{\star}_t }dN^{\star}_t\Big). 
 \end{align*}

On the other hand, since $L^i$ are increasing and $    P^i_t \geq    P^{j}_t-c^{i,j}_t$, we obtain
 \begin{align*}
    d\big(   P_t^{E^{\star}_t}e^{\int_t^Tr(s)ds}\big) &=e^{\int_t^Tr(s)ds} \Big( - dL_t^{E^{\star}_{t-}}-H_t^{E^{\star}_t} (    Q_t^{E^{\star}_t},\beta)dt+    Q_t^{E^{\star}_t} dX_t+\big(   P^{E^{\star}_t }_t -    P_t^{E^{\star}_{t-}}\big)dN^{\star}_t\Big),\\
    &\leq e^{\int_t^Tr(s)ds} \Big(f_t^{E^{\star}_t} (\alpha^{\star}_t,\beta)dt+    Q_t^{E^{\star}_t} dW_t^{E^{\star},\alpha^{\star} , \beta} +c_t^{E^{\star}_{t-}, E^{\star}_t }dN^{\star}_t\Big).
 \end{align*}

Let $M^{\xi}_t:=\big(   P^{E^{\star}_t}_t - Y^{\xi;E^{\star}_t}_t\big)e^{\int_t^Tr(s)ds}$,   $t\in[0,T].$ Then we have $$
M^{\xi}_t \leq M^{\xi}_{0-}+ \int_0^te^{\int_s^Tr(s')ds'}\big(   Q_s^{E^{\star}_s}  - Z_s^{\xi;E^{\star}_s} \big)dW_s^{E^{\star},\alpha^{\star} , \beta}, 
$$
Since $   Q^i,Z^{\xi;i} \in \H$ for every $i\in \M$, we see by the BDG inequality that $M^{\xi}$ is a supermartingale. Since $g$ is non-decreasing, $g^{\conc}$ is also non-decreasing, combine with Jensen's inequality, we derive
\begin{align}
\E^{E^{\star},\alpha^{\star},\beta}\Big[g\big(M_T^{\xi}\big)\Big]\leq \E^{E^{\star},\alpha^{\star},\beta}\Big[g^{\conc}\big(M_T^{\xi}\big)\Big]&\leq g^{\conc}\Big(\E^{E^{\star},\alpha^{\star},\beta}\big[M_T^{\xi}\big]\Big)\\&\leq g^{\conc}(M^{\xi}_{0-}) =  g^{\conc}\Big(\big(   P^{\beta,0}_0-Y_0^{\xi,0}\big)e^{\int_0^Tr(s)ds}\Big)  .\label{eq:step1_thm_5.3}
 \end{align}

Since $(\xi,\beta)\in\C_{RS}$, by Proposition~\ref{prop:solve_agent_problem_regime}, we know $ Y_0^{\xi,0}\geq R$. Since $g $ is non-decreasing, we have 
$$
\E^{E^{\star},\alpha^{\star},\beta}\Big[g\big(M_T^{\xi}\big)\Big]\leq   g^{\conc}\Big(\big(   P^{\beta,0}_0 -  R\big)e^{\int_0^Tr(s)ds}\Big).
$$
Take supremum on $\xi$, we obtain $V^P_{\beta}\leq g^{\conc}\Big(\big(   P^{\beta,0}_0-R\big)e^{\int_0^Tr(s)ds}\Big)$.

\textit{Step 2.}
We will prove $V^P_{\beta}\geq g^{\conc}\Big(\big(   P^{\beta,0}_0-R\big)e^{\int_0^Tr(s)ds}\Big)$. Let $\bar{x}:=e^{\int_0^T r(s) d s}\big(   P_0^{ \beta,0}-R\big)$, and fix $\eta>0$. Since $g^{\conc}<\infty$, we can choose $x_1, x_2 \in \mathbb{R}$ and $\lambda \in(0,1)$ such that
$$
    \bar{x}=\lambda x_1+(1-\lambda) x_2,\qquad
    \lambda g\left(x_1\right)+(1-\lambda) g\left(x_2\right) \geq g^{\conc}(\bar{x})-\eta .
$$

Let $X^1$ be the first coordinate of $X$, and let $\Phi$ denote the probability distribution function of a standard 1-dimensional Gaussian variable $\mathcal{N}(0,1)$. Define
$$
A_{\varepsilon}:=\left\{X_{\varepsilon}^1-X_0^1 \leq \sqrt{\varepsilon} \Phi^{-1}(\lambda)\right\} .
$$
Let $ M:= \|\mu\|_{\infty}$, since for every $(E,\alpha,\beta)\in\Es\times \A\times\B$, $X^1_{\varepsilon} - X^1_0 - W^{E,\alpha,\beta,1}_{\varepsilon}= \int_0^{\varepsilon}\mu_s^{E_s,1}(\alpha_s,\beta_s)ds \in[ -M\varepsilon,M\varepsilon]$,
therefore, $
\big\{W_{\varepsilon}^{E,\alpha,\beta,1} \leq \sqrt{\varepsilon} \Phi^{-1}(\lambda)-M \varepsilon\big\} \subset A_{\varepsilon} \subset\big\{W_{\varepsilon}^{E,\alpha,\beta,1} \leq \sqrt{\varepsilon} \Phi^{-1}(\lambda)+M \varepsilon\big\},$ implying that $
\Phi\Big(\Phi^{-1}(\lambda)-M\sqrt{\varepsilon}\Big)\leq\P^{E,\alpha,\beta}\big(A_{\varepsilon} \big)\leq  \Phi\Big(\Phi^{-1}(\lambda)+M\sqrt{\varepsilon}\Big).
$
Consequently,
\begin{align}
    \label{eq:A_vepsilon_converges_to_lambda}
\lim_{\varepsilon\rightarrow 0^+ }\sup_{(E,\alpha,\beta)\in\Es\times \A\times\B}\Big|\mathbb{P}^{E, \alpha, \beta}\big(A_{\varepsilon}\big) -  \lambda \Big| = 0.
\end{align}
Define the contract
$$
\xi^{\varepsilon}:=\ell-Q_{\varepsilon}+\delta_{\varepsilon},
~~\text{where}~~
Q_{\varepsilon}:=x_1 \mathds{1}_{A_{\varepsilon}}+x_2 \mathds{1}_{A_{\varepsilon}^c},
~~\text{and}~~
\delta_{\varepsilon}:=\sup _{E, \alpha}\left|\mathbb{E}^{E, \alpha, \beta}\left[Q_{\varepsilon}\right]-\bar{x}\right|,
$$
and notice that $\lim_{\varepsilon\rightarrow 0^+}\delta_{\varepsilon} = 0$, by \eqref{eq:A_vepsilon_converges_to_lambda}. As $Q_{\varepsilon}$ is bounded, $\xi^{\varepsilon}\in\L$. Then for every $(E, \alpha)\in\Es\times\A$,
\begin{align*}
J_{R S}^A\left(\xi^{\varepsilon}, \beta ; E, \alpha\right)&=J_{R S}^A(\ell, \beta ; E, \alpha)-e^{-\int_0^Tr(s)ds}\Big( \mathbb{E}^{E, \alpha, \beta}[Q_{\varepsilon}]-\delta_{\varepsilon} \Big)
\geq J_{R S}^A(\ell, \beta ; E, \alpha)-e^{-\int_0^Tr(s)ds}\bar{x}.
\end{align*}

Taking the supremum over $(E, \alpha)$, we derive:

$$
V_{R S}^A\left(\xi^{\varepsilon}, \beta\right) \geq V_{R S}^A(\ell, \beta)-e^{-\int_0^Tr(s)ds}\bar{x} =     P_0^{\beta,0} - e^{-\int_0^Tr(s)ds}\bar{x}  =R,
$$
which implies that $(\xi^{\varepsilon},\beta)\in \C_{RS}$, i.e. $\xi^{\varepsilon}$ is an admissible contract. Now let $\left(E^{\varepsilon}, \alpha^{\varepsilon}\right)$ be any  optimal response to $\left(\xi^{\varepsilon}, \beta\right)$, and denote $\E^\varepsilon:=\E^{E^{\varepsilon}\!, \alpha^{\varepsilon}\!,\beta}$ and $\P^\varepsilon$ the corresponding probability measure, then:
\begin{align*}
V^P_{\beta}
\geq
\E^\varepsilon\Big[g\left(\ell-\xi^{\varepsilon}\right)\Big] &=\E^\varepsilon\Big[g\left(Q_{\varepsilon}\!-\!\delta_{\varepsilon}\right)\Big] 
 = g(x_1 \!-\! \delta_{\varepsilon})\P^\varepsilon(A_{\varepsilon}) + g(x_2 \!-\! \delta_{\varepsilon})\P^\varepsilon(A_{\varepsilon}^c).
\end{align*}
 Since $g$ is continuous, we pass $\varepsilon$ to $0$ and derive $V^P_{\beta} \geq \lambda g\left(x_1\right)+(1-\lambda) g\left(x_2\right) \geq g^{\text {conc }}(\bar{x})-\eta\longrightarrow g^{\text {conc }}(\bar{x})$ as $\eta\searrow 0$.

\medskip
\textit{Step 3.} 
Combine the previous two steps we conclude that $V^P_{\beta}  =g^{\text {conc }}(\bar{x})$. Since $g^{\conc}$ is non-decreasing, we see that\begin{align*}
    V^P_{RS} = \sup_{\beta\in \B}g^{\conc}\Big(\big(   P^{\beta,0}_0-R\big)e^{\int_0^Tr(s)ds}\Big) =g^{\conc}\bigg(\Big( \sup_{\beta\in \B}   P^{\beta,0}_0-R\Big)e^{\int_0^Tr(s)ds}\bigg).
\end{align*}

Now the optimization problem $\sup_{\beta\in\mathcal B}   P_0^{\beta,0}$ is a standard regime switching problem, which can be solved by a simple verification argument similar to Proposition~\ref{prop:solve_agent_problem_regime}. This  yields  $\sup_{\beta\in\B}   P_0^{\beta,0} = \bar{   P}^0_0,$
and we finally conclude $
V^{P}_{RS} =  g^{\conc}\Big(\big(\bar{   P}_0^{0} - R\big)e^{\int_0^Tr(s)ds}\Big).
$
When $\big(\bar{   P}_0^{0} - R\big)e^{\int_0^Tr(s)ds}\in \{g = g^{\conc}\}$, it is easy to see that the maximum is attained at $(\xi^{\star},\beta^{\star})$.
\end{proof}

\begin{remark}
    In the proof above, we use the boundedness of $\mu$ to derive Equation~\eqref{eq:A_vepsilon_converges_to_lambda}. The same conclusion remains valid under the weaker assumption that, $$
\lim_{\varepsilon\rightarrow0^+}\varepsilon^{\frac{p}{2}-1}\sup_{(E,\alpha,\beta)\in\Es\times \A\times\B}\E^{E,\alpha,\beta}\mbrac[4]{\int_0^{\varepsilon}\sup_{i\in\M}\abs{\mu^i_t}^pdt }= 0 ,
    $$
 for some $p>1$.    Therefore, combine with Remark~\ref{rmk:relax_boundedness_mu}, this allows us to
    consider a broader class of $\mu$, which need not be bounded.
\end{remark}

\appendix
\section{Computational details}\label{app:compute}

\begin{proposition}\label{prop:fully_nonlinear_to_semilinear}
Let $g$ be non-decreasing with $g^{\conc}\in C^{2}(\R)$, $r$ be continuous, and $w\in C^{1,2}([0,T)\times \R^d)\cap C([0,T]\times \R^d)$ a classical solution to Equation \eqref{eq:semi_linear_PDE_of_w}. Then the value function $v(t,x,y):= g^{\conc}\big((w(t,x) - y)e^{\int_t^Tr(s)ds}\big)$ is a classical solution of \eqref{eq:HJB_fully_non_linear_non_concav}. 
\end{proposition}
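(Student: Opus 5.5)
This is a direct verification: compute every derivative of $v$, substitute into \eqref{eq:HJB_fully_non_linear_non_concav}, and use the semilinear PDE \eqref{eq:semi_linear_PDE_of_w} for $w$ to cancel terms.

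\emph{Derivatives.} Write $G:=g^{\conc}$, $\rho(t):=e^{\int_t^T r(s)ds}$, $u:=(w(t,x)-y)\rho(t)$, and $G',G''$ evaluated at $u$. By the chain rule,
\begin{align*}
\partial_y v&=-\rho G', & \partial_{yy}v&=\rho^2G'',\\
\nabla_x v&=\rho G'\nabla w, & \partial_{xy}v&=-\rho^2G''\nabla w,\\
\Delta_{xx}v&=\rho G'\Delta w+\rho^2G''|\nabla w|^2, & \partial_t v&=\rho G'\partial_t w-r\rho(w-y)G'.
\end{align*}
Since $g$ is non-decreasing, so is $G$; hence $G'\ge0$, and concavity gives $G''\le0$.

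\emph{Computing $F$.} With $p_x=\rho G'\nabla w$ and $p_y=-\rho G'$ we get $p_x+p_yz=\rho G'(\nabla w-z)$. The bracket in $F$ becomes
$$\rho G'\big[q\cdot(\nabla w-z)+H(z,b)\big]+\tfrac12\rho^2G''\big(|z|^2-2z\cdot\nabla w\big).$$
Convexity of $H$ in $z$ and $q\in\partial_zH(z,b)$ give $H(z,b)+q\cdot(\nabla w-z)\le H(\nabla w,b)\le\bar H(\nabla w)$, with equality at $z=\nabla w$ and $b\in\arg\max H(\nabla w,\cdot)$. The last term equals $\tfrac12\rho^2G''(|z-\nabla w|^2-|\nabla w|^2)$. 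Because $G''\le0$, the quadratic part is maximized at $z=\nabla w$. Both pieces are therefore maximized simultaneously at $z=\nabla w$, and
$$F=\rho G'\bar H(\nabla w)-\tfrac12\rho^2G''|\nabla w|^2.$$
Where $G'=0$, the first piece vanishes for every $z$ and the same value is still attained. Where $G''=0$, the quadratic part vanishes and the first piece is still maximized at $z=\nabla w$. So the formula holds in all cases.

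\emph{Plugging in.} Substituting into $-\partial_tv-\tfrac12\Delta_{xx}v-ry\partial_yv-F$:
\begin{itemize}
\item The $G''|\nabla w|^2$ terms are $-\tfrac12\rho^2G''|\nabla w|^2$ from the Laplacian and $+\tfrac12\rho^2G''|\nabla w|^2$ from $-F$, so they cancel.
\item The $r$ terms are $r\rho(w-y)G'$ from $-\partial_tv$ and $ry\rho G'$ from $-ry\partial_yv$, which together give $r\rho wG'$.
\end{itemize}
What remains is
$$\rho G'\big[-\partial_tw+rw-\tfrac12\Delta w-\bar H(\nabla w)\big]=0$$
by \eqref{eq:semi_linear_PDE_of_w}. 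The terminal condition follows from continuity of $w$ up to $T$ together with $w(T,\cdot)=\ell$ and $\rho(T)=1$, which give $v(T-,x,y)=G(\ell(x)-y)$.

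\emph{Main obstacle.} The only delicate step is the supremum defining $F$. It requires $G''\le0$; this is exactly why the concave envelope, and not $g$ itself, must appear in $v$. One also has to handle the degenerate points where $G'=0$ or $G''=0$, and confirm that the supremum is attained rather than merely bounded above; the case analysis above does both.
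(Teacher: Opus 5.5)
Your proposal is correct and follows essentially the same route as the paper. It computes the derivatives of $v$ by the chain rule. It then uses the subgradient inequality for $H$ (with $G'\ge 0$) and $G''\le 0$ to show that both parts of the bracket in $F$ are maximized at $z=\nabla w$, which gives $F=\rho G'\bar H(\nabla w)-\tfrac12\rho^2G''|\nabla w|^2$. The $G''$ terms then cancel, leaving $\rho G'$ times the semilinear equation for $w$.

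The only cosmetic differences are that you take the supremum over all $q\in\partial_zH(z,b)$, where the paper uses the selection $\mu^\star$, and that you state the terminal condition with $g^{\conc}$, which is the correct form.
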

\begin{proof}
    Without loss of generality, we assume $g$ is concave. Let $\bar{\mathcal{R}}_t := e^{\int_t^Tr(s)ds}$ Denote $g'\big((w(t,x) - y)\bar{\mathcal{R}}_t \big)\bar{\mathcal{R}}_t $ by $g'_t$ and $g''\big((w(t,x) - y)\bar{\mathcal{R}}_t \big)\bar{\mathcal{R}}_t ^2$ by $g''_t$. Then we have $$
        \partial_t v = g'_t(\partial_t w-r(w-y)), \quad \partial_y v = -g'_t, \quad \partial_{x} v = g'_t \partial_{x} w, $$
        $$
       \partial_{yy} v = g''_t, \quad \partial^2_{x y} v = - g''_t\partial_{x} w, \quad \Delta_{xx}v = g'' _t|\partial_{x} w|^2 + g' _t\Delta_{xx}w. 
$$
Let $\mu^{\star}(t,x,z,b):=\mu(t,x,\alpha^{\star}(t,x,z,b),b)\in\partial_z H(t,x,z,b) $, by definition,
\begin{align*}
F(t,x,Dv,D^2v)&= \sup_{\substack{z\in\R^d\\b\in B}}\Big[
\mu^{\star}(t,x,z,b)\cdot\big(\partial_{x}v(t,x,y)+z\,\partial_yv(t,x,y)\big)
- H(t,x,z,b)\,\partial_yv(t,x,y) \\
&\hspace{3.8em}
+\tfrac12 |z|^2\,\partial_{yy}v(t,x,y)
+ z\cdot\partial^2_{x y}v(t,x,y)\Big].
\\ &
= \sup_{\substack{z\in\R^d\\b\in B}}\Big[g'_t(\mu^{\star}(t,x,z,b)\cdot (\partial_{x} w-z)+H(t,x,z,b)) +g''_t(\tfrac{1}{2}|z|^2-\partial_{x}w \cdot z)\Big].
\end{align*}
Since $H$ is convex in $z$, and $\mu^{\star}(t,x, z,b)\in \partial_zH(t,x, z,b)$, so we obtain: for every $(t,x,b)$, $z\mapsto\mu^{\star}(t,x, z,b)\!\cdot\! (\partial_{x} w-z)+H(t,x,z,b)$ attains its maximum at $z = \partial_{x} w$. On the other hand, $\tfrac{1}{2}|z|^2-\partial_{x}w \cdot z$  attains its minimum at $z = \partial_{x} w$ as well. By assumption, $g'_t\geq 0$, $g''_t\leq 0$, so $$
 F(t,x,Dv,D^2v)= \sup_{b\in B} \Big[g'_tH( t,x,\partial_{x} w,b) -\tfrac{1}{2}g''_t|\partial_{x}w|^2\Big]=g'_t\bar{H}( t,x,\partial_x w) -\tfrac{1}{2}g''_t|\partial_{x}w|^2.
$$
Therefore,
\begin{align*}
   &\quad-\partial_t v
-\tfrac12 \Delta_{xx}v-r\,y\,\partial_yv-F(\cdot,Dv,D^2v)= g'_t \big(-\partial_t w -\tfrac{1}{2}\Delta_{xx}w +rw -\bar{H}( \cdot,\partial_{x} w) \big)= 0.
\end{align*}
Finally, it is obvious to check the terminal condition $v(T,x,y) = g(\ell(x) - y)$ .
\end{proof}

\section{Verification of the Assumption for Quadratic BSDE}\label{app:quadratic_bsde}
In this section we verify the $F$ defined by Equation~\eqref{eq:def_F} satisfies the standard assumption for generators of quadratic BSDE. First as all $p\in \partial_z H_t(z,b)$ satisfies $|p|\leq \|\mu\|_{\infty}$, and $|H_t(z,b)|\leq C_H(1+|z|)$ for some constant $C_H>0$, it follows that $\|F_{\cdot}(0)\|_{\infty}<\infty$. It remains to verify that
$$
|F_t(q) - F_t(q')|\leq L_F(1+|q|+|q'|)|q-q'|.
$$
It is easy to check that there exist constants $C_1$, $C_2$ depending only on $\eta_A, \eta_P, \|\mu\|_{\infty},\|f\|_{\infty}$, s.t. \begin{equation}
    \label{eq:app_quadra_property1}
\tilde{F}_t(q, z, p, b)\leq C_1 - C_2\big(|z|^2+|q-z|^2\big).
\end{equation}
On the other hand, pick $b_0\in B$ and $p_0\in\partial_z H_t(0,b_0)$   \begin{equation} \label{eq:app_quadra_property2}
F_t(q)\geq \tilde{F}_t(q,0,p_0,b_0) = q\cdot p_0+H_t(0,b_0)-\tfrac{\eta_P}{2}|q|^2\geq -C(1+|q|^2),   
\end{equation}
for some constant $C>0$ depending only on $\eta_P, \|\mu\|_{\infty},\|f\|_{\infty}$.
Now fixed $q\in\R^d$, for every $\varepsilon\in(0,1)$ and $(z_{\varepsilon},b_{\varepsilon},p_{\varepsilon})$ that satisfies \begin{equation}
    \label{eq:p_varepsilon_satisf}
p_{\varepsilon}\in \partial_z H_t(z_{\varepsilon},b_{\varepsilon}),\qquad F_t(q)\leq \tilde{F}_t(q,z_{\varepsilon},b_{\varepsilon},p_{\varepsilon})+\varepsilon.
\end{equation}
By Equation~\eqref{eq:app_quadra_property1}-\eqref{eq:app_quadra_property2}
\begin{align*}
    C_1 - C_2\big(|z_{\varepsilon}|^2+|q-z_{\varepsilon}|^2\big)\geq \tilde{F}_t(q, z_{\varepsilon}, p_{\varepsilon}, b_{\varepsilon})\geq F_t(q)-\varepsilon\geq -C(1+|q|^2)-1.
\end{align*}
So $
C(1+|q|^2)\geq |q-z_{\varepsilon}|^2,
$
for some $C>0$ depending only on $\eta_A,\eta_P, \|\mu\|_{\infty},\|f\|_{\infty}$.
For every $q,q'\in\R^d$, $\varepsilon\in(0,1)$ and $(z_{\varepsilon},b_{\varepsilon},p_{\varepsilon})$ that satisfies Equation~\eqref{eq:p_varepsilon_satisf} 
\begin{align*}
    F_t(q)-F_t(q') &\leq \tilde{F}_t(q, z_{\varepsilon}, p_{\varepsilon}, b_{\varepsilon})-\tilde{F}_t(q', z_{\varepsilon}, p_{\varepsilon}, b_{\varepsilon})+\varepsilon \\
    &=(q-q') \cdot p_{\varepsilon}-\tfrac{\eta_P}{2}\big(|q-z_{\varepsilon}|^2-|q'-z_{\varepsilon}|^2\big)+\varepsilon\\
&= (q-q')\cdot\big( p_{\varepsilon}-\eta_P(q-z_{\varepsilon}) \big)+\tfrac{\eta_P}{2}|q - q'|^2+\varepsilon\\
&\leq  |q - q'| \|\mu\|_{\infty}+\eta_P|q - q'| \cdot|q-z_{\varepsilon}|+\tfrac{\eta_P}{2}|q - q'|^2+\varepsilon\\
&\leq  |q - q'| \|\mu\|_{\infty}+\sqrt{C}\eta_P|q - q'|(1+|q|)+\tfrac{\eta_P}{2}|q - q'|\cdot (|q| + |q'|)+\varepsilon\\
&=  |q - q'| \big(\|\mu\|_{\infty}+\sqrt{C}\eta_P+(\sqrt{C}+\tfrac{1}{2})\eta_P|q|+\tfrac{\eta_P}{2} |q'|\big)+\varepsilon.
\end{align*}
Let $\varepsilon\rightarrow 0$ we have $F_t(q) - F_t(q')\leq L_F(1+|q|+|q'|)|q-q'|$, where $L_F := \|\mu\|_{\infty}+2\sqrt{C}\eta_P+\eta_P$. By symmetry we complete the proof.
\bibliographystyle{plainnat}
\bibliography{bibl.bib}

\end{document}